\documentclass{article}

\usepackage[utf8]{inputenc}
\usepackage[T1]{fontenc}
\usepackage{amsmath,amssymb,amsfonts}
\usepackage{amsthm}
\usepackage{graphicx}
\usepackage[colorlinks,linkcolor=blue,citecolor=blue,urlcolor=blue]{hyperref}
\usepackage[margin=1in]{geometry}
\usepackage{booktabs}

\newtheorem{theorem}{Theorem}[section]
\newtheorem{lemma}[theorem]{Lemma}
\newtheorem{proposition}[theorem]{Proposition}

\newtheorem{definition}[theorem]{Definition}
\newtheorem{claim}[theorem]{Claim}
\theoremstyle{remark}
\newtheorem{remark}[theorem]{Remark}

\newcommand{\Scal}{\mathrm{scal}}
\newcommand{\Ric}{\mathrm{Ric}}
\newcommand{\Rico}{{\mathrm{Ric}_0}}
\newcommand{\CPIC}{C_{PIC}}
\newcommand{\CPICone}{C_{PIC1}}
\newcommand{\CPICtwo}{C_{PIC2}}
\newcommand{\CB}{\mathcal{C}_B}
\newcommand{\dt}{\tfrac{\mathrm{d}}{\mathrm{d}t}}

\title{Pinching cones for positive isotropic curvature in dimensions seven and eight}

\author{Jae Ho Cho}
\date{}

\begin{document}

\maketitle

\begin{abstract}
    We construct and prove the transversal invariance of two families of pinching cones for the
    Hamilton ODE $\dt R=Q(R)$ in dimensions $n=7,8$, thereby extending the pinching estimate that
    Brendle \cite{Brendle2019} established for $n\geq 12$ and Chen \cite{Chen2024} for
    $9\leq n\leq 11$. In dimension $n=8$, two steps in Chen's construction genuinely fail.
    The first, an estimate for the second cone
    family, is repaired by keeping a coupling that is discarded in the dimension-general
    argument. The second is the gluing of the two cone families, for which Chen's estimate
    fails at $n=8$ by a definite margin. We replace it by a new argument which reduces the
    gluing to a single linear inequality on the cone of weakly PIC curvature operators, and
    prove that inequality by an explicit finite combination of frame inequalities. In dimension
    $n=7$, the two separate cone-invariance results are proved, but the gluing step is not
    established here. Accordingly, the resulting full pinching estimate and classification
    statements are restricted to dimension $n=8$. The $n=8$ pinching estimate, together with
    dimension-free arguments
    of Brendle, yields in dimension eight the topological classification
    of compact manifolds with positive isotropic curvature that contain no nontrivial
    incompressible $(n-1)$-dimensional space forms, extending a theorem of Brendle from
    $n\geq 12$. Together with the curvature-improvement and classification results of
    Cho--Li \cite{ChoLi2022} and Brendle--Naff \cite{BrendleNaff}, it yields the classification
    of noncompact $\kappa$-noncollapsed
    ancient solutions to the Ricci flow with uniformly PIC,
    extending a theorem of Cho--Li from $n=4$ or $n\geq 12$.
\end{abstract}

\section{Introduction}
\label{sec:intro}

An algebraic curvature operator $R$ on $\mathbb{R}^n$ is said to have \emph{weakly PIC} (in other
words, nonnegative isotropic curvature) if
\begin{equation}\label{eq:wpic}
    R_{1313}+R_{1414}+R_{2323}+R_{2424}-2R_{1234}\ge 0
\end{equation}
for every orthonormal four-frame $\{e_1,e_2,e_3,e_4\}$. It has \emph{weakly PIC$_1$} if
\begin{equation}\label{eq:wpic12}
    R_{1313}+\lambda^2 R_{1414}+R_{2323}+\lambda^2 R_{2424}-2\lambda R_{1234}\ge 0
\end{equation}
for all such frames and all $\lambda\in[-1,1]$, and \emph{weakly PIC$_2$} if
\begin{equation}\label{eq:wpic2}
    R_{1313}+\lambda^2 R_{1414}+\mu^2 R_{2323}+\lambda^2\mu^2 R_{2424}-2\lambda\mu R_{1234}\ge 0
\end{equation}
for all such frames and all $\lambda,\mu\in[-1,1]$. The corresponding closed convex $O(n)$-invariant cones are
denoted $\CPIC$, $\CPICone$, $\CPICtwo$. We write PIC, PIC$_1$, PIC$_2$ for their interiors (so $(M,g)$
is a PIC manifold when its curvature operator lies in $\operatorname{int}\CPIC$ at every point). These
conditions were introduced by Micallef--Moore \cite{MicallefMoore1988} (see also
Micallef--Wang \cite{MicallefWang1993}) and are central to the
Ricci-flow theory of positive curvature: $\CPIC$, $\CPICone$ and $\CPICtwo$ are all preserved by the
Ricci flow, the first having been proved for $n=4$ by Hamilton \cite{Hamilton1997} and in general
independently by Brendle--Schoen \cite{BrendleSchoen2009} and Nguyen \cite{NguyenLott2010}.

Following B\"ohm--Wilking \cite{BoehmWilking2008} (see also Wilking \cite{Wilking2013}), compact manifolds with positive isotropic
curvature are classified through a family of pinching cones: a continuous family $\{\hat C(\beta)\}$
of closed convex $O(n)$-invariant cones, each transversally invariant under the Hamilton ODE
$\dt R=Q(R)$, entered after a scalar shift of the curvature operator and pinching towards
$\CPICone$. Brendle \cite{Brendle2019} constructed such a family for $n\ge 12$ and deduced the
topological classification of compact PIC manifolds in those dimensions; in dimension four the
corresponding classification was obtained by Hamilton \cite{Hamilton1997}, Chen--Zhu
\cite{ChenZhu2006} and Chen--Tang--Zhu \cite{ChenTangZhu2012}. Chen \cite{Chen2024}
rebalanced Brendle's construction (replacing the fourth defining condition of the first cone
family by a sharper one) and constructed a family of pinching cones for $9\le n\le 11$. The
purpose of the present paper is to carry out the construction in dimension $n=8$ and to draw
the two classification consequences.

The proofs of the two main theorems are given in Section \ref{sec:overview}.

Following Cho--Li \cite[Definition~2.2]{ChoLi2022}, a Riemannian manifold $(M^n,g)$ with
$n\ge5$ is said to have \emph{uniformly PIC} if there exists a constant $\theta>0$ such that
\begin{equation}\label{eq:uniformly-pic}
    R_{1313}+R_{1414}+R_{2323}+R_{2424}-2R_{1234}\ge4\theta\,\Scal(R)>0
\end{equation}
for every orthonormal four-frame. Equivalently, since
$(\mathrm{id}\wedge\mathrm{id})_{ijij}=2$ for $i\ne j$, condition \eqref{eq:uniformly-pic} says
that $\Scal(R)>0$ and $R-\theta'\,\Scal(R)\,\mathrm{id}\wedge\mathrm{id}\in\CPIC$ with $\theta'=\theta/2$;
we use this cone form in Section \ref{sec:overview}.

\begin{theorem}\label{thm:main1}
    Let $n=8$ and let $(M^n,g(t))$, $t\in(-\infty,0]$, be a complete, noncompact,
    $\kappa$-noncollapsed ancient solution of the Ricci flow with uniformly PIC. Then
    $(M^n,g(t))$ has bounded curvature and is isometric either to the shrinking Ricci flow on a
    quotient of the round cylinder $S^{n-1}\times\mathbb R$ by standard isometries, or to the
    Bryant soliton.
\end{theorem}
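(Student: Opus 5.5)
The plan follows the structure of Cho--Li \cite{ChoLi2022}. Their argument for $n\ge 12$ uses exactly one dimension-dependent input: the existence of a family of pinching cones for the Hamilton ODE that pinches towards $\CPICone$, namely Brendle's construction \cite{Brendle2019}. Every other step is dimension-free. So I would prove Theorem \ref{thm:main1} by feeding the $n=8$ pinching family of this paper into the Cho--Li machinery. I take as given the $n=8$ pinching estimate: transversal invariance of the two cone families together with the gluing via the linear inequality on $\CPIC$.

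The first step is a curvature pinching estimate for ancient solutions. Uniformly PIC in cone form says $R-\theta'\Scal(R)\,\mathrm{id}\wedge\mathrm{id}\in\CPIC$. After a scalar shift, this places $R(x,t)$ inside the initial cone $\hat C(\beta_0)$ of the family, scaled by $\Scal$. Transversal invariance of each $\hat C(\beta)$, combined with Hamilton's maximum principle for systems, shows this membership is preserved along the flow. On an ancient solution we may start the flow at arbitrarily negative times. The standard Böhm--Wilking/Brendle argument then yields the conclusion: for every $\varepsilon>0$, the operator $R+\varepsilon\,\Scal\,\mathrm{id}\wedge\mathrm{id}$ lies in $\CPICone$ at every point, hence in the limit in $\CPICone$. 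Completeness and noncompactness enter through a maximum principle on noncompact manifolds. Here I would use the Cho--Li bounded-curvature-in-time trick, or argue on each compact time slab with a cutoff, exactly as in \cite[Section 3]{ChoLi2022}.

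The second step applies Brendle's curvature-improvement argument. Cho--Li show that an ancient $\kappa$-noncollapsed solution that is uniformly PIC and weakly PIC$_1$ is in fact weakly PIC$_2$. This uses a further dimension-free pinching argument with a family of cones interpolating between $\CPICone$ and $\CPICtwo$. It is also where the bounded-curvature statement comes from: once the solution is weakly PIC$_2$, Hamilton's Harnack inequality and $\kappa$-noncollapsing give bounded curvature via a blow-down and splitting argument. The resulting solution has nonnegative complex sectional curvature and is uniformly PIC, so the splitting theorem forces any asymptotic soliton to be a cylinder quotient.

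The final step is the classification. Brendle--Naff \cite{BrendleNaff} classify noncompact $\kappa$-noncollapsed ancient solutions with weakly PIC$_2$ and uniformly PIC in dimensions $n\ge4$: they are either quotients of shrinking cylinders by standard isometries or the Bryant soliton. Invoking this gives the theorem.

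The main obstacle is the first step. I must check that the $n=8$ cone family has the properties Cho--Li's argument actually uses, not merely that each cone is invariant. Concretely:
\begin{itemize}
\item the family is continuous in $\beta$;
\item its initial member contains every operator satisfying the uniformly PIC condition with constant $\theta$, possibly after adjusting $\beta_0$ depending on $\theta$;
\item the cones converge to $\CPICone$ as $\beta\to\infty$;
\item the gluing provides a single family rather than two disjoint ones.
\end{itemize}
If the paper's glued family starts from a cone requiring a stronger initial pinching than $\theta$, I would first run the first-family cones alone, which are entered directly from uniformly PIC, until the glued region is reached.
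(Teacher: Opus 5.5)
Your architecture matches the paper's: enter the glued family, push to $\beta\nearrow B_n$ to get weakly PIC$_1$, upgrade to weakly PIC$_2$, then obtain bounded curvature and the classification. The entry step, however, has a genuine gap. Uniformly PIC does not place $R$ in any cone $\hat C(\beta)$. Your fallback (first-family cones ``entered directly from uniformly PIC'') fails for the same reason.

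Every $C(b)$ contains condition (3) of Definition \ref{def:pinchingCones1}. Pulled back through $l_{a,b}$ with \eqref{eq:lab-traces}, at $n=8$ it reads
\[
\Ric(R)_{11}+\Ric(R)_{22}\ge\frac2n\cdot\frac{b(42b+15)}{42b^2+21b+2}\,\Scal(R)
\]
for every orthonormal pair; at $b=\frac1{18}$ this is $\frac2n\cdot\frac{26}{89}\Scal(R)$. Since also $\tilde C(b)\subset C(b_{max,2})$, the whole family lies inside the cone of uniformly $2$-positive Ricci curvature.

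Uniformly PIC gives no such bound pointwise. Take the curvature operator of $S^{n-2}\times H^2$, with sectional curvature $1$ on the sphere and $-c$, $0<c\le1$, on the hyperbolic factor. It is weakly PIC, yet $\Ric_{11}+\Ric_{22}=-2c$ on the $H^2$ plane. Adding a small multiple of $\mathrm{id}\wedge\mathrm{id}$ makes it uniformly PIC without changing that sign. This is why the entry property in Proposition \ref{prop:pinching} carries the extra hypothesis $\Ric_{11}+\Ric_{22}-\theta\Scal+N\ge0$.

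A constant shift $N>0$ is Brendle's compact-case device, which your ``scalar shift'' suggests. It is not scale-invariant, and on an ancient solution whose curvature is not yet known to be bounded there is no way to remove it.

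The missing input is \cite[Prop.~3.1]{ChoLi2022}: a complete, noncompact, $\kappa$-noncollapsed ancient solution with uniformly PIC satisfies $\Ric_{11}+\Ric_{22}\ge\delta\Scal$ for some $\delta>0$. This is a global property of the ancient solution, not an algebraic consequence of the curvature condition. With it, shrinking $\theta\le\delta$ allows entry with $N=0$. The cone-continuation theorem \cite[Thm.~3.3]{ChoLi2022} then carries $R$ through every $\hat C(\beta)$, $\beta<B_n$, without a curvature bound. This replaces your ``Hamilton's maximum principle with restarts at negative times.''

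A smaller point concerns your second step. Bounded curvature only becomes available after weakly PIC$_2$ is known, via \cite[Prop.~5.6]{ChoLi2022}. So the upgrade from weakly PIC$_1$ to weakly PIC$_2$ must work without bounded curvature. The paper uses \cite[Prop.~6.2]{LiNi2020} for this, whereas the corresponding lemma of Bamler--Cabezas-Rivas--Wilking assumes bounded curvature. A cone-interpolation or Harnack argument that relies on bounded curvature would therefore be circular at that point.
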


\begin{theorem}\label{thm:main2}
    Let $n=8$ and let $(M^n,g_0)$ be a compact Riemannian manifold with positive
    isotropic curvature. If $M$ contains no nontrivial incompressible $(n-1)$-dimensional space
    forms, then $M$ is diffeomorphic to a finite connected sum of quotients of $S^n$ and
    $S^{n-1}\times\mathbb R$ by standard isometries.
\end{theorem}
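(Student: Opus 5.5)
The plan is to derive Theorem \ref{thm:main2} from the $n=8$ pinching estimate by running Brendle's dimension-free surgery argument. The pinching estimate itself comes from the two cone families and the gluing step developed later in the paper.

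\textbf{Step 1: a glued pinching family.} Combine the two transversally invariant cone families into one continuous family $\{\hat C(\beta)\}$ of closed, convex, $O(n)$-invariant cones. This family should interpolate from (a scalar shift of) $\CPIC$ to cones pinching towards $\CPICone$.

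\textbf{Step 2: the pinching estimate.} Apply the B\"ohm--Wilking maximum principle argument to this family. Given compact initial data with PIC, it yields a pinching set $F$ such that along the flow $R-\psi(\Scal)\,\mathrm{id}\wedge\mathrm{id}$ stays in an appropriate cone, with $\psi(s)/s\to 0$. Concretely, a solution with PIC satisfies the curvature pinching estimate Brendle proves in dimensions $n\ge12$: PIC$_2$ up to an error of lower order in $\Scal$, together with uniform PIC and a derivative estimate. Brendle's derivative and cylindrical estimates are dimension-free once this pinching set is available, so they carry over verbatim.

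\textbf{Step 3: surgery and classification.} Feed the estimates into Brendle's construction of Ricci flow with surgery. This requires the classification of ancient $\kappa$-solutions with uniformly PIC and pinched curvature, the neck detection lemma, and the surgery procedure. Assuming no incompressible space forms, the flow becomes extinct in finite time. Reversing the surgeries then exhibits $M$ as a connected sum of quotients of $S^n$ and $S^{n-1}\times\mathbb R$ by standard isometries. Here I would check line by line that every input of Brendle's argument other than the pinching estimate is dimension-free for $n\ge5$, as the abstract asserts.

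\textbf{Main obstacle.} The substantive difficulty is Step 1 at $n=8$. Chen's estimate for the second family fails there, and his gluing fails by a definite margin. For the second family, I would first keep the coupling term that the dimension-general argument discards. For the gluing, I would reduce it to showing that a specific linear functional $\ell$ is nonnegative on $\CPIC$. That is, I would write $\ell(R)$ as an explicit nonnegative combination of PIC frame expressions
\[
R_{1313}+R_{1414}+R_{2323}+R_{2424}-2R_{1234}
\]
over suitably chosen (averaged) frames, and verify the resulting coefficient identity.
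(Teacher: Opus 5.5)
Your proposal follows the paper's route: Theorem~\ref{thm:main2} is deduced from the $n=8$ pinching estimate (Proposition~\ref{prop:pinching}, built from Theorems~\ref{thm:trans1}, \ref{thm:trans2} and \ref{thm:gluing}) via Brendle's surgery and finite-extinction machinery, which depends on the dimension only through that estimate, and your two $n=8$ repairs match Sections~\ref{sec:proof2}--\ref{sec:proof3}: retaining the factor $b/\tilde a$ in $F(b)$, and certifying a linear inequality on $\CPIC$ by averaged frame inequalities. The step you compress is that the gluing first reduces to the $\lambda$-family $L_\lambda(T)\ge0$, which contains the cross term $-2\lambda T_{1234}$ that averaged frames cannot produce; the paper removes it by splitting off half of the tilted-frame inequality and using a Bernstein decomposition in $\lambda$, and only then does the single functional $K$ appear.
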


Theorem \ref{thm:main1} extends Theorem $1.2$ of Cho--Li \cite{ChoLi2022}, proved there for
$n=4$ or $n\ge 12$, to dimension $n=8$. Theorem \ref{thm:main2} extends Theorem
$1.4$ of Brendle \cite{Brendle2019} (cf.\ Theorem $1.1$ of Chen \cite{Chen2024} for
$9\le n\le 11$) to $n=8$. Both deductions are, after the pinching estimate is in hand,
insensitive to the dimension: Theorem \ref{thm:main2} is obtained from the pinching estimate
by the surgery and finite-extinction machinery of \cite{Brendle2019} (valid in dimension
$n\ge 5$ once the pinching estimate is available, see \cite[\S1.5]{Chen2024}), and Theorem \ref{thm:main1} follows from the
curvature-improvement arguments of \cite{ChoLi2022} together with the classification of ancient
$\kappa$-solutions of Brendle--Naff \cite{BrendleNaff} (see also
\cite{BamlerCabezasRivasWilking2019,LiNi2020}). For related rigidity results for gradient shrinking solitons with positive
isotropic curvature, see Naff \cite{Naff2019} and Chen \cite{Chen2025}; in dimension four, see
Ni--Wallach \cite{NiWallach2007}, Li--Ni--Wang \cite{LiNiWang2018} and Li--Wang
\cite{LiWang2019}; and for the round cylinders themselves, see the rigidity theorems of
Li--Wang \cite{LiWang2024} and Li--Zhang \cite{LiZhang2023}. The underlying structure theory
of noncollapsed shrinkers is developed in \cite{LiLiWang2021,DongLi2026} and further in
\cite{LiZhang2026}, and the K\"ahler analogue of the ancient-solution classification is treated
in \cite{Li2024} (see also \cite{LiZhang2023b}). Accordingly the only dimension-restricted
input to either theorem is the pinching estimate, which is the content of this paper.

\paragraph{Outline of the pinching-cone construction.}
The construction consists of two families of cones, $C(b)$ for
$0<b\le b_{max,2}$ (Subsection \ref{subsec:first}) and $\tilde C(b)$ for
$0<b\le\tilde b_{max,2}$ (Subsection \ref{subsec:second}), which are glued along a
common endpoint (Subsection \ref{sec:gluing}). Concatenating them gives a continuous family
$\hat C(\beta)$ for $0<\beta<B_n$ (see Proposition \ref{prop:pinching} for the value of $B_n$),
each transversally
invariant under $\dt R=Q(R)$, with the entry property stated below, and
pinching towards $\CPICone$ as $\beta\nearrow B_n$. This is recorded, together with the pinching
estimate it implies, in Proposition \ref{prop:pinching}. Every dimension-dependent input of the
construction is a scalar inequality, and at $n=8$ these inequalities fall into three kinds.

\begin{enumerate}
    \item[(a)] The scalar inequalities behind the first family (Lemmas \ref{lem:monotonicity-8},
    \ref{lem:gapcond-8} and \ref{lem:gapcond2-8}) hold at $n=8$ with small but definite margins.
    At $n=7$, a sharper endpoint comparison verifies the first-family inequalities
    (Subsection \ref{subsec:proof1-7} and Remark \ref{rem:n7reduction}).
    \item[(b)] The estimate controlling the second family (Lemma $4.3$ of \cite{Chen2024}) fails
    at $n=8$: in the notation of Subsection \ref{subsec:proof2-common}, one has
    $1+(n-2)(1-\zeta)-2K\zeta^2<0$ at $b=\tilde b_{max,2}$. We observe that the estimate in which it
    is used only requires the weaker inequality
    $1+(n-2)(1-\zeta)-2K\zeta^2\,\frac{b}{\tilde a}>0$, in which the factor $\frac{b}{\tilde
    a}=(1+\frac{n-2}{2}b)^{-1}$ is retained rather than discarded. The resulting quantity
    $F(b)$ is positive at $n=8$ (Remark \ref{rem:sharpbracket}, and
    \eqref{eq:F8end}).
    \item[(c)] The gluing of the two families is controlled, in \cite{Chen2024}, by a scalar
    estimate (Lemma $4.2$, equation $(4.1)$ there) which at $n=8$ evaluates numerically to
    $0.2688\ldots\ge0.3181\ldots$ and is therefore false. This is the essential obstruction.
    Section \ref{sec:proof3} replaces the estimate by a different argument: after the reduction
    of Subsection \ref{subsec:proof3-reduction} the gluing becomes a one-parameter family
    (indexed by $\lambda\in[0,1]$) of linear inequalities for weakly PIC operators. We show that the
    whole family follows, through a Bernstein representation of the underlying quadratic
    polynomial in $\lambda$, from the frame inequality at $\lambda=1$, the tilted-frame
    inequalities, and a single linear inequality, which we prove by exhibiting it as an explicit
    nonnegative combination of eight averaged frame inequalities (Subsection
    \ref{subsec:proof3-8}).
\end{enumerate}

This paper is organized as follows. Section \ref{sec:preliminaries} recalls the B\"ohm--Wilking transformations and collects the
standard consequences of weakly PIC that are used throughout. Section \ref{sec:overview}
states the three transversal-invariance and gluing theorems which constitute the
core technical result and derives the pinching estimate and the two main theorems from them.
Sections \ref{sec:proof1}--\ref{sec:proof3} verify the three theorems at $n=8$.
The separate invariance results in dimension seven are retained in Subsections
\ref{subsec:proof1-7} and \ref{subsec:proof2-7}; the limitation at the gluing step is
explained in the \hyperref[note:dimension-seven]{note on dimension seven} in Section \ref{sec:proof3}.

\paragraph{AI disclosure.}
AI tools were used to assist with the exact computations behind the polynomial inequalities and
the linear certificates in this paper, and to help with the copyediting of the manuscript. The
polynomial identities and the linear certificate are verified explicitly in the proofs.

\section{Preliminaries}
\label{sec:preliminaries}

We collect the algebraic setup and the standard consequences of weakly PIC
that are used throughout. Everything in this section is independent of the dimension apart from the
requirement $n\ge6$ in Lemma \ref{lem:wpic}. In particular it applies to $n=7,8$.

\subsection{The B\"ohm--Wilking transformation}
\label{subsec:bw}

Let $\CB(\mathbb R^n)\subset S^2(\mathfrak{so}(n))$ denote the space of algebraic curvature operators, i.e.\
symmetric bilinear forms on $\Lambda^2\mathbb R^n$ satisfying the first Bianchi identity. The space
$\CB(\mathbb R^n)$ carries the orthogonal $O(n)$-invariant decomposition
\begin{equation}\label{eq:cb-decomp}
    \CB(\mathbb R^n)=\langle\mathrm{id}\wedge\mathrm{id}\rangle\oplus\langle\Rico\rangle\oplus\langle\mathcal W\rangle
\end{equation}
into pairwise inequivalent irreducible $O(n)$-invariant subspaces. Here
$\langle\mathrm{id}\wedge\mathrm{id}\rangle$ denotes multiples of the identity,
$\langle\Rico\rangle$ the curvature operators of traceless Ricci type, and
$\langle\mathcal W\rangle$ the space of Weyl curvature operators. Given $a,b$ with $0\le b\le a$, the
B\"ohm--Wilking map $l_{a,b}:\CB(\mathbb R^n)\to\CB(\mathbb R^n)$ is the $O(n)$-equivariant linear map acting
by scalars on these three summands. Equivalently,
\begin{equation}\label{eq:lab-form}
    l_{a,b}(R)=R+b\,\Ric(R)\wedge\mathrm{id}+\frac{2(a-b)}{n}\Scal(R)\,I,\qquad
    I=\tfrac12\,\mathrm{id}\wedge\mathrm{id},
\end{equation}
with the trace identities
\begin{equation}\label{eq:lab-traces}
    \Rico(l_{a,b}(R))=(1+(n-2)b)\Rico(R),\qquad
    \Scal(l_{a,b}(R))=(1+2(n-1)a)\Scal(R).
\end{equation}
The Hamilton ODE on $\CB(\mathbb R^n)$ is $\dt R=Q(R)$, where
$Q(R)_{ijkl}=R_{ijpq}R_{klpq}+2R_{ipkq}R_{jplq}-2R_{iplq}R_{jpkq}$ is the quadratic form on
$S^2(\mathfrak{so}(n))$ induced by the Lie bracket on $\mathfrak{so}(n)$ (the algebraic curvature operator
of the reaction term of the Ricci flow, see \cite{BoehmWilking2008,Chen2024}). Pulling this ODE back by
$l_{a,b}$, a curvature operator $R=l_{a,b}(S)$ solves $\dt R=Q(R)$ if and only if $S$ solves
\begin{equation}\label{eq:pullback-ode}
    \dt S=Q(S)+D_{a,b}(S),
\end{equation}
where
\begin{equation}\label{eq:Dab-form}
    \begin{split}
        D_{a,b}(S)={}&\bigl(2b+(n-2)b^2-2a\bigr)\Rico(S)\wedge\Rico(S)
        +2a\,\Ric(S)\wedge\Ric(S)+2b^2\Rico(S)^2\wedge\mathrm{id}\\
        &+\frac{nb^2(1-2b)-2(a-b)(1-2b+nb^2)}{n\bigl(1+2(n-1)a\bigr)}\,
        |\Rico(S)|^2\,\mathrm{id}\wedge\mathrm{id}.
    \end{split}
\end{equation}
Taking the trace and using Hamilton's identity $\Scal(Q(S))=2|\Ric(S)|^2$ gives the scalar evolution
\begin{equation}\label{eq:scalar-evol}
\begin{split}
    \dt\Scal(S)&=P\,|\Ric(S)|^2+Q\,\Scal(S)^2,\\
    P&=\frac{2(1+(n-2)b)^2}{1+2(n-1)a},\qquad
    Q=\frac{2\bigl((1+2(n-1)a)^2-(1+(n-2)b)^2\bigr)}{n\bigl(1+2(n-1)a\bigr)},
\end{split}
\end{equation}
with $P+nQ=2+4(n-1)a$. The evolution of the Ricci tensor is recorded for later use.

\begin{proposition}[cf.\ Section 2 of \cite{Chen2024}]\label{prop:bw}
    If $S$ evolves by \eqref{eq:pullback-ode}, then
    \begin{equation}\label{eq:ricci-evol}
        \begin{split}
            \dt\Ric(S)={}&2\bigl(S*\Ric(S)\bigr)-4b\,\Ric(S)^2
            +\frac4n\bigl(2b+(n-2)a\bigr)\Scal(S)\Ric(S)\\
            &+2\,\frac{n^2b^2-2(n-1)(a-b)(1-2b)}{n\bigl(1+2(n-1)a\bigr)}\,|\Rico(S)|^2\mathrm{id}
            +\frac4{n^2}(a-b)\Scal(S)^2\mathrm{id},
        \end{split}
    \end{equation}
    where $(S*\Ric)_{ik}=\sum_{p,q}S_{ipkq}\Ric_{pq}$, and $\dt\Scal(S)$ is given by
    \eqref{eq:scalar-evol}. In particular, $P+nQ=2+4(n-1)a$.
\end{proposition}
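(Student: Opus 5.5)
The evolution \eqref{eq:ricci-evol} comes from taking the Ricci contraction of \eqref{eq:pullback-ode} term by term. Since $\Ric$ is linear, $\dt\Ric(S)=\Ric(Q(S))+\Ric(D_{a,b}(S))$. For the first term we use Hamilton's identity $\Ric(Q(S))_{ik}=2\sum_{p,q}S_{ipkq}\Ric_{pq}=2(S*\Ric(S))_{ik}$. Strictly, this holds for the Ricci-flow reaction term up to the convention for $Q$; its trace is consistent with $\Scal(Q(S))=2|\Ric(S)|^2$ quoted above. The remaining work is to compute $\Ric$ of each summand of $D_{a,b}(S)$ in \eqref{eq:Dab-form}.

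For this we record the standard contraction formulas for Kulkarni--Nomizu-type products. With the convention $(A\wedge B)_{ijkl}$ normalized so that $(\mathrm{id}\wedge\mathrm{id})_{ijij}=2$, for symmetric $A,B$ we have
\[
\Ric(A\wedge B)=\tfrac12\bigl(\operatorname{tr}(A)B+\operatorname{tr}(B)A\bigr)+\tfrac{n-2}{2}\cdot\tfrac{AB+BA}{n-2}\cdots .
\]
Rather than rely on memory, I would derive the needed special cases directly from components. These are
\[
\Ric(T\wedge\mathrm{id})=(n-2)T+\operatorname{tr}(T)\,\mathrm{id},\qquad \Ric(\mathrm{id}\wedge\mathrm{id})=2(n-1)\mathrm{id},
\]
and, for symmetric $A$,
\[
\Ric(A\wedge A)=2\bigl(\operatorname{tr}(A)A-A^2\bigr).
\]
The first two are consistent with \eqref{eq:lab-traces}. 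Applying these with $A=\Rico$ (trace zero) and $A=\Ric=\Rico+\frac{\Scal}{n}\mathrm{id}$ gives the following.
\begin{itemize}
\item $\Ric(\Rico\wedge\Rico)=-2\Rico^2$.
\item $\Ric(\Ric\wedge\Ric)=2\bigl(\Scal\,\Ric-\Ric^2\bigr)$.
\item $\Ric(\Rico^2\wedge\mathrm{id})=(n-2)\Rico^2+|\Rico|^2\mathrm{id}$.
\end{itemize}

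Next we substitute $\Rico^2=\Ric^2-\frac{2\Scal}{n}\Ric+\frac{\Scal^2}{n^2}\mathrm{id}$ and collect coefficients.
\begin{itemize}
\item The coefficient of $\Ric^2$ is $-2(2b+(n-2)b^2-2a)-4a+2b^2(n-2)=-4b$, as claimed.
\item The coefficient of $\Scal\,\Ric$ is $\frac{4}{n}(2b+(n-2)b^2-2a)+4a-\frac{4(n-2)b^2}{n}=\frac4n(2b+(n-2)a)$, matching.
\item The multiples of $\mathrm{id}$ come from the $\Scal^2/n^2$ parts, namely $\frac{\Scal^2}{n^2}\bigl(-2(2b+(n-2)b^2-2a)+2(n-2)b^2\bigr)=\frac{4(a-b)}{n^2}\Scal^2$.
\item The $|\Rico|^2\mathrm{id}$ terms come from $2b^2|\Rico|^2$ plus $2(n-1)$ times the last coefficient of \eqref{eq:Dab-form}. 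This should simplify over the common denominator $n(1+2(n-1)a)$ to $2\frac{n^2b^2-2(n-1)(a-b)(1-2b)}{n(1+2(n-1)a)}$. Indeed, $2b^2n(1+2(n-1)a)+2(n-1)[nb^2(1-2b)-2(a-b)(1-2b+nb^2)]$ should reduce to $2n^2b^2-4(n-1)(a-b)(1-2b)$, which I would check by expanding. The $a b^2$ terms cancel between $4n(n-1)ab^2$ and $-4n(n-1)(a-b)b^2-4n(n-1)b^3$.
\end{itemize}

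Finally, $P+nQ=2+4(n-1)a$ follows directly from \eqref{eq:scalar-evol}: $P+nQ=\frac{2(1+2(n-1)a)^2}{1+2(n-1)a}$. As a consistency check, tracing \eqref{eq:ricci-evol} should reproduce \eqref{eq:scalar-evol}. The main obstacle is bookkeeping of the normalization of $\wedge$ and of $Q$, which must match \eqref{eq:lab-form}. I would fix it by checking $\Ric(l_{a,b}(R))$ against \eqref{eq:lab-traces} before computing.
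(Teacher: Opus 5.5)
Your proposal is correct and follows the paper's route: contract the pulled-back ODE using Hamilton's identity $\Ric(Q(S))=2S*\Ric(S)$ and the contraction rules for $\wedge$, then collect coefficients; you supply the bookkeeping the paper leaves implicit, and your $\Ric^2$, $\Scal\,\Ric$, $\Scal^2\mathrm{id}$ and $|\Rico|^2\mathrm{id}$ coefficients all check out. Two small fixes: delete the garbled general formula for $\Ric(A\wedge B)$, which should read $\operatorname{tr}(B)A+\operatorname{tr}(A)B-AB-BA$ (your three special cases are instances of it); and treat the trace of \eqref{eq:ricci-evol}, with $|\Rico|^2=|\Ric|^2-\frac1n\Scal^2$, not as a mere consistency check but as the derivation of \eqref{eq:scalar-evol}, which is part of the statement and does come out exactly as $P|\Ric|^2+Q\Scal^2$.
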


\begin{proof}
    Equation \eqref{eq:ricci-evol} follows from the definition of $D_{a,b}$ together with the
    Hamilton identities $\Ric(Q(S))=2S*\Ric(S)$ and $\Scal(Q(S))=2|\Ric(S)|^2$. Taking the trace of
    \eqref{eq:ricci-evol} and using $|\Rico|^2=|\Ric|^2-\frac1n\Scal^2$ gives \eqref{eq:scalar-evol},
    and the displayed formula for $P$ yields $P+nQ=2+4(n-1)a$. See \cite[\S2]{Chen2024}.
\end{proof}

\subsection{Elementary consequences of weakly PIC}
\label{subsec:wpic}

We use the following frame inequalities repeatedly. If $\{v_1,v_2,v_3,v_4\}$ is an orthonormal
four-frame we write, specializing \eqref{eq:wpic},
\begin{equation}\label{eq:wpic-frame}
    W(v_1,v_2,v_3,v_4):=S_{1313}+S_{1414}+S_{2323}+S_{2424}-2S_{1234}\ge0,
\end{equation}
where here and below $e_1,e_2,e_3,e_4$ are any relabelling of $v_1,\dots,v_4$. Replacing $v_4$ by
$-v_4$ leaves the first four terms unchanged and reverses the sign of the last. Averaging the two
inequalities kills the last term. This sign averaging is used without further comment.

\begin{lemma}\label{lem:wpic}
    Let $n\ge6$ and $S\in\CPIC\cap\CB(\mathbb R^n)$. Then, for every orthonormal frame
    $\{e_1,\dots,e_n\}$:
    \begin{enumerate}
        \item $\Scal(S)-2\Ric_{kk}=\sum_{i\ne k}\sum_{j\ne k}S_{ijij}\ge0$ for every $k$;
        \item $\sum_{p\ge3}(S_{1p1p}+S_{2p2p})\ge0$; equivalently
        $\Ric_{11}+\Ric_{22}\ge2S_{1212}$;
        \item $\Ric_{11}+\Ric_{22}+\Ric_{33}+\Ric_{44}\ge0$;
        \item $(n^2-7n+14)S_{1212}+(n-5)(\Ric_{11}+\Ric_{22})+\Scal(S)\ge0$; for $n=8$ this reads
        $22S_{1212}+3(\Ric_{11}+\Ric_{22})+\Scal(S)\ge0$, and together with (2) it gives
        $\Ric_{11}+\Ric_{22}\ge-\frac1{14}\Scal(S)$;
        \item for every $\lambda\in[0,1]$ and every $p\ge5$,
        $S_{1313}+\lambda^2S_{1414}+S_{2323}+\lambda^2S_{2424}-2\lambda S_{1234}
        +(1-\lambda^2)(S_{1p1p}+S_{2p2p})\ge0$;
        \item $\Scal(S)\ge0$, with equality if and only if $S=0$.
    \end{enumerate}
\end{lemma}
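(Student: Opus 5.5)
The plan is to derive every item from the sign-averaged frame inequality \eqref{eq:wpic-frame}, so that no curvature-operator machinery is needed. For a frame $\{v_1,v_2,v_3,v_4\}$, averaging over $v_4\mapsto -v_4$ leaves
\[
S(v_1,v_3,v_1,v_3)+S(v_1,v_4,v_1,v_4)+S(v_2,v_3,v_2,v_3)+S(v_2,v_4,v_2,v_4)\ge0 .
\]
In words: for two orthonormal pairs $\{v_1,v_2\}$ and $\{v_3,v_4\}$, the sum of the four ``cross'' sectional curvatures is nonnegative. Items (1)--(4) then come from summing this over suitable index sets of a fixed orthonormal frame $\{e_i\}$, and item (5) comes from a tilted frame.

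\textbf{Items (1)--(3).} For (1), fix $k$ and sum the cross inequality over all choices of four distinct indices in $\{1,\dots,n\}\setminus\{k\}$, which is possible since $n-1\ge4$. By symmetry each $S_{ijij}$ with $i,j\neq k$ occurs the same positive number of times, which gives $\sum_{i,j\neq k}S_{ijij}\ge0$. The identity with $\Scal-2\Ric_{kk}$ is just bookkeeping.

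For (2), use the pairs $\{e_1,e_2\}$ and $\{e_p,e_q\}$ with $3\le p<q$ and sum over $p<q$. Each index $p$ appears $n-3$ times, so $(n-3)\sum_{p\ge3}(S_{1p1p}+S_{2p2p})\ge0$. The equivalent form follows from $\Ric_{11}=S_{1212}+\sum_{p\ge3}S_{1p1p}$ and the analogous formula for $\Ric_{22}$.

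For (3), split
\[
\sum_{i\le4}\Ric_{ii}=2\sum_{i<j\le4}S_{ijij}+\sum_{i\le4,\;p\ge5}S_{ipip}.
\]
The first term is nonnegative by summing the cross inequality over the three pairings of $\{1,2,3,4\}$. The second is nonnegative by summing it over all frames consisting of a pair from $\{1,\dots,4\}$ and a pair from $\{5,\dots,n\}$; this uses $n\ge6$.

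\textbf{Item (4).} Here the pairing is chosen so that $S_{1212}$ appears. Take the frame $(v_1,v_2,v_3,v_4)=(e_1,e_p,e_2,e_q)$ with $p\neq q\ge3$. Its cross inequality reads
\[
S_{1212}+S_{1q1q}+S_{2p2p}+S_{pqpq}\ge0 .
\]
Summing over all ordered pairs $(p,q)$ gives
\[
(n-2)(n-3)S_{1212}+(n-3)A+2B\ge0,
\]
where $A=\sum_{p\ge3}(S_{1p1p}+S_{2p2p})$ and $B=\sum_{3\le p<q}S_{pqpq}$. Now substitute $\Scal=2S_{1212}+2A+2B$ and $\Ric_{11}+\Ric_{22}=2S_{1212}+A$. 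A direct check shows that $(n^2-7n+14)S_{1212}+(n-5)(\Ric_{11}+\Ric_{22})+\Scal$ equals exactly the left-hand side above.

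For $n=8$ this reads $22S_{1212}\ge-3(\Ric_{11}+\Ric_{22})-\Scal$. Combined with $2S_{1212}\le\Ric_{11}+\Ric_{22}$ from (2), it gives $14(\Ric_{11}+\Ric_{22})\ge-\Scal$.

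\textbf{Item (5).} Apply \eqref{eq:wpic-frame} to the frame $(e_1,e_2,e_3,e_4')$ with $e_4'=\lambda e_4+\sqrt{1-\lambda^2}\,e_p$, which is orthonormal since $p\ge5$. Expanding produces:
\begin{itemize}
\item the desired terms;
\item cross terms $2\lambda\sqrt{1-\lambda^2}\,(S_{141p}+S_{242p})$;
\item a term $-2\sqrt{1-\lambda^2}\,S_{123p}$.
\end{itemize}
The remaining two kinds of term are odd in $e_p$. Averaging with the frame obtained from $e_p\mapsto-e_p$ cancels them and yields (5).

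\textbf{Item (6), the main obstacle.} Summing (1) over $k$ gives $(n-2)\Scal\ge0$. The delicate part is rigidity. If $\Scal=0$, then every cross inequality used in (1) must be an equality, and this holds in every orthonormal basis. So the cross sum $K(v_1,v_3)+K(v_1,v_4)+K(v_2,v_3)+K(v_2,v_4)$ vanishes for all orthonormal frames.

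Within four indices $\{1,2,3,4\}$, the three pairings give three linear relations among the six sectional curvatures. Bringing in a fifth and sixth index and comparing, for example $K_{12}+K_{15}+\dots$ computed against different pairings, the linear system should force every $K_{ij}=0$. I expect this small linear-algebra step, which needs $n\ge6$, to be the point requiring care.

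Since the basis is arbitrary, all sectional curvatures vanish. An algebraic curvature operator is determined by its sectional curvatures, so $S=0$.
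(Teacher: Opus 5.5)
Your items (1)--(5) follow the paper's proof essentially line by line. You use the same sign-averaged cross inequality and the same index sums, including the ordered-pair sum over the frames $(e_1,e_p,e_2,e_q)$ in (4), whose bookkeeping you check correctly. You also use the same $e_p\mapsto-e_p$ average of the tilted frame in (5).

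The one real gap is the equality case of (6). You correctly show that $\Scal(S)=0$ forces every cross sum $K_{ip}+K_{iq}+K_{jp}+K_{jq}$ to vanish, where $K_{ij}=S_{ijij}$, for all distinct indices in every orthonormal basis. But you then only assert that this linear system ``should force'' all $K_{ij}=0$. That step is the entire content of the rigidity statement, so as written it is unproved.

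The step is true and short. Take distinct $i,j$ and three further distinct indices $p,q,r$, which exist once $n\ge5$. Subtracting the relations for the pairings $\{i,j\},\{p,q\}$ and $\{i,j\},\{p,r\}$ gives $K_{iq}+K_{jq}=K_{ir}+K_{jr}$. So $c:=K_{iq}+K_{jq}$ does not depend on $q\notin\{i,j\}$, and any single relation then reads $2c=0$. Hence $K_{iq}=-K_{jq}$ for all distinct $i,j,q$. Choosing three distinct $i,j,l\ne q$ yields $K_{iq}=-K_{jq}=K_{lq}=-K_{iq}$, so $K_{iq}=0$.

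The paper closes (6) by a different route that avoids the linear system and reuses (1) and (2). If $\Scal(S)=0$, then (1) gives $\Ric(v,v)\le0$ for every unit vector $v$, and $\operatorname{tr}\Ric=0$ forces $\Ric=0$. Then (2) gives $S_{1212}\le0$ for every orthonormal pair. Since these sectional curvatures sum to $\frac12\Scal(S)=0$ over any orthonormal basis, they all vanish. Either completion, followed by polarization, gives $S=0$. Your combinatorial version has the small advantage of needing only $n\ge5$ at that step.
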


\begin{proof}
    Write $K_{ij}=S_{ijij}$. Sign averaging \eqref{eq:wpic-frame} gives
    \begin{equation}\label{eq:wpic-sign-average}
        W^0(i,j,p,q):=K_{ip}+K_{iq}+K_{jp}+K_{jq}\ge0
    \end{equation}
    for four distinct indices. For an index set $J$ of size $m\ge4$, sum the three pairings
    of each four-element subset of $J$. Each $K_{ij}$, $i<j$ in $J$, occurs
    $2\binom{m-2}{2}$ times, so
    \begin{equation}\label{eq:wpic-subspace-trace}
        \sum_{\substack{i<j\\i,j\in J}}K_{ij}\ge0.
    \end{equation}
    Taking $J=\{1,\dots,n\}\setminus\{k\}$ proves (1); taking $J=\{1,\dots,n\}$ proves
    $\Scal(S)\ge0$.

    Put $X=\sum_{p\ge3}(K_{1p}+K_{2p})$ and $U=\Ric_{11}+\Ric_{22}=2K_{12}+X$.
    Summing $W^0(1,2,p,q)$ over $3\le p<q\le n$ gives $(n-3)X\ge0$, proving (2).

    For (3), put
    \begin{equation*}
        A_0=\{1,2,3,4\},\qquad B_0=\{5,\dots,n\},\qquad m=|B_0|=n-4\ge2.
    \end{equation*}
    By \eqref{eq:wpic-subspace-trace}, $H:=\sum_{i<j,\ i,j\in A_0}K_{ij}\ge0$.
    Summing $W^0(i,j,p,q)$ over unordered pairs $i<j$ in $A_0$ and $p<q$ in $B_0$ gives
    $3(m-1)D\ge0$, where $D:=\sum_{i\in A_0,\ p\in B_0}K_{ip}$.
    Hence $\sum_{i=1}^4\Ric_{ii}=2H+D\ge0$.

    For (4), sum $W^0(1,p,2,q)$ over ordered pairs $p\ne q$ in $\{3,\dots,n\}$:
    \begin{equation*}
    \begin{split}
        0&\le (n-2)(n-3)K_{12}+(n-3)X+2\sum_{3\le p<q\le n}K_{pq}\\
         &=(n^2-7n+14)K_{12}+(n-5)U+\Scal(S).
    \end{split}
    \end{equation*}
    At $n=8$, adding eleven times $U-2K_{12}\ge0$ gives $14U+\Scal(S)\ge0$.

    Part (5) is the average of \eqref{eq:wpic-frame} over the two orthonormal four-frames
    $\{e_1,e_2,e_3,\lambda e_4\pm\sqrt{1-\lambda^2}\,e_p\}$.
    Finally, if $\Scal(S)=0$, (1) gives $\Ric(v,v)\le0$ for every unit vector $v$.
    Since $\operatorname{tr}\Ric=0$, this implies $\Ric=0$. Part (2) then gives $K_{ij}\le0$
    for every orthonormal pair. In any orthonormal basis their sum is $\Scal(S)/2=0$, so
    all of them vanish. Every orthonormal pair extends to such a basis; hence all sectional
    curvatures vanish, and polarization of an algebraic curvature tensor gives $S=0$.
    This proves (6).
\end{proof}

\section{Overview of the proof}
\label{sec:overview}

    \subsection{The first family of pinching cones}
    \label{subsec:first}

    We adapt the first family of pinching cones constructed by Chen \cite{Chen2024} (Definition 3.1
    there; cf.\ Definition 3.1 of \cite{Brendle2019}, whose fourth defining condition is replaced by a
    sharper one). For $n=7,8$, define
    \begin{equation*}
        (\varepsilon,b_{max,2})=
        \begin{cases}
            (10^{-3},\frac1{18}),&n=8,\\
            (10^{-4},\frac{131}{2000}),&n=7.
        \end{cases}
    \end{equation*}
    For $0<b\le b_{max,2}$ we associate to $b$ the data $a,\gamma,\rho,\omega,A,P,Q$ by
    \begin{equation}\label{data}
        \begin{split}
            a&=a(b)=\frac{(2+(n-2)b)^2}{2(2+(n-3)b)}b,\\
            \gamma&=\gamma(b)=\frac{b}{2+(n-3)b},\\
            \rho&=\rho(b)=b-\frac{2(n-1)\gamma(1-2b)}{n^2}-\frac{2(n-1)(1+\gamma)(n^2b^2-2(n-1)(a-b)(1-2b))}{n^2(1+2(n-1)a)},\\
            \omega&=\omega(b)=(1-\varepsilon)\sqrt{\frac{27(2+(n-2)b)}{8}\frac{b(1+(n-2)b)^2}{n^2\rho^3(2+(n-3)b)^2}},\\
            A&=A(b)=\frac{2+8b}{(n-1)(n-4)}+\frac{4}{n}(2b+(n-2)a),\\
            P&=P(b)=\frac{2(1+(n-2)b)^2}{1+2(n-1)a},\\
            Q&=Q(b)=\frac{2((1+2(n-1)a)^2-(1+(n-2)b)^2)}{n(1+2(n-1)a)}.
        \end{split}
    \end{equation}
    As computed in Section 2 of \cite{Chen2024}, if an algebraic curvature operator $S$ evolves by the
    ODE $\dt S=Q(S)+D_{a,b}(S)$, then
    \begin{equation*}
        \dt\Scal(S)=P|\Ric(S)|^2+Q\Scal(S)^2 .
    \end{equation*}

    \begin{definition}[Definition 3.1 of \cite{Chen2024}]\label{def:pinchingCones1}
        For $0<b\le b_{max,2}$, let $\mathcal{E}(b)$ denote the set of all algebraic curvature tensors $S$
        for which there exists a tensor $T\in S^2(\mathfrak{so}(n))$ satisfying:
        \begin{enumerate}
            \item $T\geq 0$;
            \item $S-T\in\CPIC$ in the relaxed sense of Remark \ref{rem:relaxed} below;
            \item $\Ric(S)_{11}+\Ric(S)_{22}+\frac{2\gamma}{n}\Scal(S)\geq 0$ for every orthonormal pair
            $\{e_1,e_2\}$;
            \item for every orthonormal frame $\{e_1,\dots,e_n\}$,
            \begin{equation}\label{eq:gapcond}
                \Ric(S)_{22}-\Ric(S)_{11}\leq\omega^{\frac12}\Scal(S)^{\frac12}
                \Bigl(\sum_{p=3}^n(T_{1p1p}+T_{2p2p})\Bigr)^{\frac12}.
            \end{equation}
        \end{enumerate}
        We then define $C(b)=l_{a,b}(\mathcal{E}(b))$.
    \end{definition}

    \begin{remark}\label{rem:relaxed}
        As in \cite{Chen2024}, we say that $T\in S^2(\mathfrak{so}(n))$ lies in $\CPIC$ if
        $T(\varphi,\bar\varphi)\geq 0$ for every $\varphi=(e_1+ie_2)\wedge(e_3+ie_4)$ with
        $\{e_1,e_2,e_3,e_4\}$ orthonormal, i.e.\ if
        $T_{1313}+T_{1414}+T_{2323}+T_{2424}+2T_{1342}+2T_{1423}\geq 0$ for all orthonormal four-frames;
        for tensors satisfying the first Bianchi identity this coincides with the weakly PIC condition. We
        write $T\geq 0$ if $T$ is nonnegative definite as a symmetric bilinear form on
        $\Lambda^2\mathbb{R}^n$.
    \end{remark}

    The proof is given in Section \ref{sec:proof1}.

    \begin{theorem}\label{thm:trans1}
        Let $n=7$ or $n=8$. Then, for each $0<b\le b_{max,2}$, the cone $C(b)$ is transversally
        invariant under the Hamilton ODE $\dt R=Q(R)$.
    \end{theorem}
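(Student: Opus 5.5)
Since $C(b)=l_{a,b}(\mathcal E(b))$ and $l_{a,b}$ conjugates $\dt R=Q(R)$ to $\dt S=Q(S)+D_{a,b}(S)$ by \eqref{eq:pullback-ode}, I will prove that $\mathcal E(b)$ is transversally invariant under $\dt S=Q(S)+D_{a,b}(S)$. The structure follows Brendle \cite[\S3]{Brendle2019} and Chen \cite[\S3]{Chen2024}. Take $S$ on the boundary of $\mathcal E(b)$, $S\ne0$, with an associated $T$. At least one of conditions (1)--(4) of Definition \ref{def:pinchingCones1} must be sharp. I will construct a time-dependent $T(t)$, evolving by $\dt T=Q(T)+$ the terms of $D_{a,b}(S)$ that are nonnegative or PIC-preserving, plus a correction proportional to $\Scal(S)$. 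Then I will verify strict inward motion for each defining inequality separately, using the maximum-principle form of transversality: the derivative of the minimizing frame quantity is positive. Conditions (1) and (2) are dimension-free. $T\ge0$ is preserved by $Q(T)$ (Hamilton), and $S-T\in\CPIC$ is preserved by the Brendle--Schoen/Nguyen argument applied to $Q(S)-Q(T)$ together with $D_{a,b}(S)$. The latter needs condition (3) and Lemma \ref{lem:wpic} to see that the $\Ric\wedge\Ric$ and $\Rico^2\wedge\mathrm{id}$ terms are PIC-nonnegative. I will cite these steps from \cite{Chen2024}, checking only that no use of $n\ge9$ occurs.

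For condition (3), suppose $\Ric_{11}+\Ric_{22}+\frac{2\gamma}{n}\Scal=0$ at a minimizing pair. I will differentiate using Proposition \ref{prop:bw} and \eqref{eq:scalar-evol}. The terms $2(S*\Ric)_{11}+2(S*\Ric)_{22}$ are bounded below via $S-T\in\CPIC$ and $T\ge0$, following the first-order conditions at the minimizer as in Brendle. The remaining scalar terms must combine with the choice of $\gamma$ and $a$ in \eqref{data} to yield a positive multiple of $|\Rico|^2$ plus $\Scal^2$ terms. The leftover coefficient is exactly $\rho$, so the key scalar input is the positivity of $\rho(b)$ (and of the associated monotonicity quantity) on $(0,b_{max,2}]$. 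This is the monotonicity lemma (Lemma \ref{lem:monotonicity-8}). I will verify it as a one-variable rational inequality in $b$, by clearing denominators and checking sign with explicit bounds at $n=8$ and $n=7$.

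The main obstacle is condition (4), the gap condition \eqref{eq:gapcond}. Assume equality holds at a frame maximizing $\Ric_{22}-\Ric_{11}$. I will compute $\dt$ of both sides. The left side is controlled by \eqref{eq:ricci-evol}: the $-4b\Ric^2$ and $S*\Ric$ terms are estimated, as in \cite[Lemma 3.4]{Chen2024}, by $(\Ric_{22}-\Ric_{11})$ times a quantity bounded by $A\,\Scal$, using Lemma \ref{lem:wpic}(4) at $n=8$ (the bound $\Ric_{11}+\Ric_{22}\ge-\frac1{14}\Scal$). The right side's derivative uses the evolution of $\sum_p(T_{1p1p}+T_{2p2p})$ under $Q(T)$, together with $\dt\Scal$. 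Arranging the result by Young's inequality gives a cubic in the ratio $x=(\Ric_{22}-\Ric_{11})/\Scal$. The factor $\frac{27}{8}$ in $\omega$ comes from optimizing such a cubic, and the slack factor $(1-\varepsilon)$ supplies strictness. This reduces (4) to scalar inequalities in $b$, namely Lemmas \ref{lem:gapcond-8} and \ref{lem:gapcond2-8} and their $n=7$ analogues. Those are the genuinely dimension-sensitive points, with small margins, so I will verify them by rigorous interval/endpoint estimates on $(0,b_{max,2}]$. At $n=7$ a sharper endpoint comparison will be needed. Finally, transversality at $S=0$ and continuity in $b$ are routine, which completes Theorem \ref{thm:trans1}.
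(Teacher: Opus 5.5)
Your route is the paper's: pull back by $l_{a,b}$, appeal to the dimension-free part of Chen's proof of his Theorem 3.3, and re-verify the dimension-sensitive scalar inequalities at $n=7,8$, with a sharper endpoint comparison at $n=7$. The gap is that your list of what must be re-verified is incomplete, and as written the proposal asserts that a dimension-dependent step is dimension-free.

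You declare conditions (1)--(2) dimension-free. In that step you mention only the $\Ric\wedge\Ric$ and $\Rico^2\wedge\mathrm{id}$ terms of $D_{a,b}(S)$. But $D_{a,b}(S)$ also contains $|\Rico(S)|^2\,\mathrm{id}\wedge\mathrm{id}$, with coefficient proportional to $nb^2(1-2b)-2(a-b)(1-2b+nb^2)$. For the first-family $a(b)$ this equals $\frac{b^2}{2+(n-3)b}\bigl(2+(n-8)b-2(n+2)(n-2)b^2-n(n-2)^2b^3\bigr)$. That expression is positive only for $b$ below an $n$-dependent threshold. Its positivity is used in Chen's Proposition 3.7, so it must be checked on $(0,b_{max,2}]$. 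The check goes through: the cubic is decreasing, with endpoint values $\frac{128}{81}$ and $\frac{479744163}{320000000}$.

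Likewise, Chen's Proposition 3.12 uses the comparison \eqref{eq:chen-C}, which involves $(n-5)^2(n-2)/((n-1)^2(n-4)^2)$ and $A$. It must be rechecked in both dimensions; it reduces to $66<112$, $\frac{631}{25}<54$ and $\frac{1+4b}{n-1}<\frac12$. These two facts are exactly what the paper adds beyond Chen's three emphasized lemmas, \eqref{eq:remaining-coeff} and \eqref{eq:chen-C}. Without them, citing ``the dimension-free remainder'' is not justified. Your phrase ``checking only that no use of $n\ge9$ occurs'' would catch them if carried out, but the plan does not identify them.

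The roles you assign to the scalar lemmas are also off. The monotonicity of $g,h$ (Lemma \ref{lem:monotonicity-8}) is not a statement about $\rho$ or about condition (3). In the paper, at $n=8$, it reduces the inequalities of Lemma \ref{lem:gapcond2-8} to the endpoint $b=b_{max,2}$. There the left-hand sides are bounded by $(\tfrac54)^2g$ and $\tfrac54gh$ and compared with the decreasing function $R(b)$.

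The quantity $\rho$ enters through $\omega\propto\rho^{-3/2}$, that is, through condition (4). Its bounds (Lemma \ref{lem:gapcond-8}) are a separate input.

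Finally, the bound $\Ric_{11}+\Ric_{22}\ge-\frac1{14}\Scal$ you invoke is specific to $n=8$, and the paper's verification does not use it. If your treatment of (4) really needed it, you would also need the $n=7$ analogue ($-\frac19$).

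Since the argument is by citation, what matters is that all five scalar facts are verified in both dimensions.
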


    \subsection{The second family of pinching cones}
    \label{subsec:second}

    Define
    \begin{equation*}
        \tilde b_{max,2}=\begin{cases}
            \frac1{40},&n=8,\\
            \frac3{125},&n=7.
        \end{cases}
    \end{equation*}

    \begin{definition}[Definition 4.1 of \cite{Chen2024}, with the omitted $\lambda^2S_{2424}$ term restored; cf.\ Definition 4.1 of \cite{Brendle2019}]\label{def:pinchingCones2}
        Assume $0<b\le\tilde b_{max,2}$ and let $a=b+\frac{n-2}{2}b^2$. We denote by
        $\tilde{\mathcal{E}}(b)$ the set of all $S\in\CB(\mathbb{R}^n)$ such that (i)
        $l_{a,b}(S)\in C(b_{max,2})$ and (ii)
        \begin{equation}\label{eq:Z}
            Z:=S_{1313}+\lambda^2S_{1414}+S_{2323}+\lambda^2S_{2424}-2\lambda S_{1234}
            +\sqrt{2a}(1-\lambda^2)\bigl(\Ric(S)_{11}+\Ric(S)_{22}\bigr)\geq 0
        \end{equation}
        for every orthonormal four-frame $\{e_1,e_2,e_3,e_4\}$ and every $\lambda\in[0,1]$. We then
        define $\tilde C(b)=l_{a,b}(\tilde{\mathcal{E}}(b))$.
    \end{definition}

    It is clear that $\tilde C(b)$ is convex for each $b$.

    The proof is given in Section \ref{sec:proof2}.

    \begin{theorem}\label{thm:trans2}
        Let $n=7$ or $n=8$. Then, for each $0<b\le\tilde b_{max,2}$, the cone $\tilde C(b)$ is
        transversally invariant under the Hamilton ODE $\dt R=Q(R)$.
    \end{theorem}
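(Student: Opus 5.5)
Transversal invariance of $\tilde C(b)=l_{a,b}(\tilde{\mathcal E}(b))$ under $\dt R=Q(R)$ is equivalent to invariance of $\tilde{\mathcal E}(b)$ under the pulled-back ODE \eqref{eq:pullback-ode}, $\dt S=Q(S)+D_{a,b}(S)$, with $a=b+\frac{n-2}{2}b^2$. We check this condition by condition, at a boundary point where one defining inequality is an equality. Transversality then comes from strictness of the resulting estimates.

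\emph{Condition (i).} We write $l_{a,b}(S)=l_{a',b'}(S')$ with $a'=a(b_{max,2})$, $b'=b_{max,2}$ and $S'=l_{a',b'}^{-1}l_{a,b}(S)$. The maps $l$ commute, since they act by scalars on the three summands of \eqref{eq:cb-decomp}. Hence (i) says that $l_{a,b}(S)\in C(b_{max,2})$, and the $R$-flow preserves $C(b_{max,2})$ by Theorem \ref{thm:trans1}. So (i) is preserved.

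\emph{Condition (ii).} This is the real content. At a boundary point, $Z=0$ for some frame and some $\lambda\in[0,1]$. We compute $\dt Z$ from the $Q(S)$-part and the $D_{a,b}(S)$-part, using \eqref{eq:ricci-evol} for the term $\sqrt{2a}(1-\lambda^2)\dt(\Ric_{11}+\Ric_{22})$. Following Brendle and Chen, the $Q(S)$-contribution to the first five terms is handled by the standard PIC-preservation argument: the minimizing frame and $\lambda$ give first and second variation conditions, and the Brendle--Schoen/Nguyen computation bounds $Q(S)(\varphi,\bar\varphi)$ below by a sum of squares. The extra Ricci term is treated as in \cite[\S4]{Chen2024}. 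The variational identities at a minimum of $Z$ let us express $S_{1p1p}+S_{2p2p}$-type terms through the Ricci term, and (i) supplies the needed inputs. Specifically, $S$ lies in the cone after an $l$-shift, so it satisfies condition (3) of Definition \ref{def:pinchingCones1} and the gap estimate \eqref{eq:gapcond} with modified constants, as well as the weakly PIC bounds of Lemma \ref{lem:wpic}, notably (2), (4) and (5). The terms from $D_{a,b}$ are quadratic in $\Rico(S)$; the choice $a=b+\frac{n-2}{2}b^2$ arranges that they are dominated by $2\sqrt{2a}(1-\lambda^2)\,S*\Ric$ together with the $-4b\Ric^2$ term.

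\emph{Main obstacle.} The hard step is Chen's Lemma 4.3, which bounds $\dt Z$ below by a positive multiple of $(1-\lambda^2)|\Rico|^2$ minus absorbable terms. It needs $1+(n-2)(1-\zeta)-2K\zeta^2>0$, and this fails at $n=8$. Our plan is to redo the estimate without discarding the coupling factor $\frac b{\tilde a}=(1+\frac{n-2}2b)^{-1}$, which arises when comparing the $\sqrt{2a}$ coefficient with $b$. This reduces the estimate to positivity of
\[
F(b)=1+(n-2)(1-\zeta)-2K\zeta^2\tfrac{b}{\tilde a}
\]
on $(0,\tilde b_{max,2}]$. We prove that positivity by showing $F$ is monotone, or by explicit rational bounds on $\zeta(b)$ and $K(b)$, and then evaluating at the endpoint $\tilde b_{max,2}=\frac1{40}$ for $n=8$ and $\frac3{125}$ for $n=7$. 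Together with the strict inequalities, which give transversality, this proves the theorem.
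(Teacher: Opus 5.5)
Your architecture matches the paper's. Condition (i) is carried by Theorem \ref{thm:trans1} through the commuting maps $l$, and the dimension-eight obstruction is removed exactly by retaining $\varrho=b/\tilde a$ and proving $F(b)>0$.

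However, the argument as written does not give transversality. The estimate that $F$ controls is $\dt Z_\lambda\ge\sqrt{2\tilde a}\,(1-\lambda^2)\frac{8\tilde a}{n^2}F(b)\Scal(S)^2$, and this degenerates at $\lambda=1$. A boundary datum may have $\lambda=1$. Then $Z_1=\Phi_1$ is just the weakly PIC condition, and the PIC-preservation argument you invoke yields only $\Phi_1(Q(S))\ge0$. Lemma \ref{lem:boundary-reaction} does not help there either, since it needs equality at some $\lambda<1$. The strict push at $\lambda=1$ has to come from the term $2\tilde a\,\Ric\wedge\Ric$ in $D_{\tilde a,b}$. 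After a unitary change of basis of $\operatorname{span}_{\mathbb C}\{z,w\}$ that diagonalizes $\Ric(S)$, this term contributes $4\tilde a(\Ric_{11}+\Ric_{22})(\Ric_{33}+\Ric_{44})$. That quantity is positive for $S\ne0$, because \eqref{eq:zeta} with $\zeta(b)<1$ makes $\Ric(S)$ two-positive. Your proposal contains neither this case nor the verification that $\zeta<1$.

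Your treatment of $D_{\tilde a,b}$ is also not the mechanism that works. Its terms are not all quadratic in $\Rico$. They are also not dominated by $2\sqrt{2\tilde a}(1-\lambda^2)S*\Ric$ together with $-4b\Ric^2$: the latter is the bad term, and both vanish at $\lambda=1$. The paper handles $D_{\tilde a,b}$ in three steps.
\begin{enumerate}
\item The choice of $\tilde a$ kills the $\Rico\wedge\Rico$ term.
\item The terms $2b^2\Rico^2\wedge\mathrm{id}$ and $c_4|\Rico|^2\mathrm{id}\wedge\mathrm{id}$ are discarded as nonnegative. This needs the dimension-specific checks $c_4>0$ and positivity of the $|\Rico|^2\mathrm{id}$ coefficient in \eqref{eq:RicEvol-pf2}.
\item The term $2\tilde a\Ric\wedge\Ric$ is absorbed, together with $Q(S)$ and the $S*\Ric$ term, into Lemma \ref{lem:boundary-reaction} with $H=\sqrt{2\tilde a}\,\Ric(S)$.
\end{enumerate}
What remains is $\frac{4(n-2)\tilde a}{n}\Scal(\Ric_{11}+\Ric_{22})-4b\bigl((\Rico)^2_{11}+(\Rico)^2_{22}\bigr)+\frac{8\tilde a}{n^2}\Scal^2$. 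This is estimated by Lemma \ref{lem:ricci-quadratic} with $\varrho=b/\tilde a$. Its hypotheses are $\lambda_{\max}(\Ric)\le\frac12\Scal$, which is Lemma \ref{lem:wpic}(1), and \eqref{eq:zeta}. The gap condition \eqref{eq:gapcond} and Lemma \ref{lem:wpic}(2),(4),(5) are not what is needed.

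Finally, $K$ does not depend on $b$. The positivity of $F$ is the real dimension-specific content, and you only announce it. Proving it requires the explicit $\zeta(b)$, for example $\frac{63}{89}\frac{1+14b+42b^2}{1+6b}$ at $n=8$, followed by the monotonicity $F'<0$ and the exact endpoint value, in both dimensions.
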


    \subsection{Gluing the two families of pinching cones}
    \label{sec:gluing}

    The proof is given in Section \ref{sec:proof3}.

    \begin{theorem}\label{thm:gluing}
        Let $n=8$. Then for $b_{max,2}$ and $\tilde{b}_{max,2}$ as in Theorems \ref{thm:trans1} and \ref{thm:trans2}, we have
        $C(b_{max,2})=\tilde C(\tilde b_{max,2})$.
    \end{theorem}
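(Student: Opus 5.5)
Since $\tilde C(\tilde b_{max,2})=l_{\tilde a,\tilde b}(\tilde{\mathcal E}(\tilde b_{max,2}))$ with $\tilde b=\tilde b_{max,2}=\frac1{40}$ and $\tilde a=\tilde b+3\tilde b^2$, and condition (i) of Definition \ref{def:pinchingCones2} already requires $l_{\tilde a,\tilde b}(S)\in C(b_{max,2})$, the inclusion $\tilde C(\tilde b_{max,2})\subseteq C(b_{max,2})$ is immediate. The content is the reverse inclusion: given $R\in C(b_{max,2})$, write $R=l_{\tilde a,\tilde b}(S)$ (the map is invertible, acting by positive scalars on the three summands of \eqref{eq:cb-decomp}), and show that $S$ satisfies condition \eqref{eq:Z} for all frames and all $\lambda\in[0,1]$. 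So I will reduce to: if $l_{\tilde a,\tilde b}(S)=l_{a,b}(S')$ with $S'\in\mathcal E(b_{max,2})$, then $Z(S)\ge0$.

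The first step is to express $S$ in terms of $S'$. Since both maps act by scalars on the irreducible summands, $S=l_{\tilde a,\tilde b}^{-1}l_{a,b}(S')$ is again of the form $S'+c_1\Ric(S')\wedge\mathrm{id}+c_2\Scal(S')\,I$ with explicit constants $c_1,c_2$ depending on $b_{max,2}=\frac1{18}$, $\tilde b=\frac1{40}$ (computed from \eqref{eq:lab-form}, \eqref{eq:lab-traces}; here $c_1>0$ since $b>\tilde b$). Substituting into $Z$ gives
\[
Z(S)=Z_0(S')+\alpha(\lambda)\bigl(\Ric(S')_{11}+\Ric(S')_{22}\bigr)+\beta(\lambda)\bigl(\Ric(S')_{33}+\Ric(S')_{44}\bigr)+\delta(\lambda)\Scal(S'),
\]
where $Z_0$ is the $\lambda$-frame expression without the Ricci term and $\alpha,\beta,\delta$ are explicit quadratics in $\lambda$. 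Then I use that $S'-T\in\CPIC$ with $T\ge0$; since $T\ge0$ the $T$-part of the $\lambda$-frame expression is nonnegative (it is $T(\varphi,\bar\varphi)$-type with $\lambda$-weights, evaluated on a simple-ish complex two-form), and condition 3 of Definition \ref{def:pinchingCones1} bounds $\Ric_{11}+\Ric_{22}$ below by $-\frac{2\gamma}{n}\Scal$. After discarding $T$ (the gap condition 4 should not be needed here), the problem becomes: a one-parameter family, in $\lambda\in[0,1]$, of linear inequalities $L_\lambda(S'')\ge0$ for weakly PIC $S''$ (plus the condition-3 term with a nonnegative multiplier).

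The second step is to handle the $\lambda$-dependence. I will write $L_\lambda$ in the Bernstein basis $(1-\lambda)^2,2\lambda(1-\lambda),\lambda^2$; it suffices that each Bernstein coefficient (a linear functional in $S''$) be nonnegative on $\CPIC$, or more flexibly that $L_\lambda$ be a nonnegative combination of: the frame inequality at $\lambda=1$ (weighted by $\lambda^2$), the tilted-frame inequalities of Lemma \ref{lem:wpic}(5), and one residual functional $L_*$ multiplying $2\lambda(1-\lambda)$ or $(1-\lambda)^2$. Endpoint $\lambda=1$ is exactly \eqref{eq:wpic-frame} plus $c\Scal\ge0$; endpoint $\lambda=0$ should follow from Lemma \ref{lem:wpic}(2),(4),(6).

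The main obstacle is the residual inequality $L_*(S'')\ge0$ on $\CPIC$, which is where Chen's scalar estimate fails at $n=8$; crude bounds like $\Ric_{11}+\Ric_{22}\ge-\frac1{14}\Scal$ from Lemma \ref{lem:wpic}(4) lose too much. I would try to prove it by an LP certificate: express $L_*$ as a nonnegative combination of sign-averaged frame inequalities $W^0(i,j,p,q)$ and the full $W$ on $\{e_1,\dots,e_4\}$, averaged over the symmetries fixing the roles of $\{1,2\},\{3,4\},\{5,\dots,8\}$, so that only finitely many orbit-averaged inequalities (in the variables $K_{12},K_{34},K_{13}$-type block sums, $S_{1234}$, and $\Scal$) appear; then solve for the coefficients exactly with rational arithmetic, checking the margin at $b_{max,2}=\frac1{18},\tilde b=\frac1{40}$.
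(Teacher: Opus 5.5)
\paragraph{Gap: the residual inequality is never established.}
Your outline follows the paper's route. It has the inclusion $\tilde C(\tilde b_{max,2})\subset C(b_{max,2})$ from condition (i), the shift expressing $S$ through the first-family tensor $S'$, and the reduction to a $\lambda$-family of linear inequalities on $\CPIC$. It then splits off tilted-frame inequalities, controls the remainder in the Bernstein basis, and certifies one residual functional by averaged frame inequalities. The gap is that you stop exactly at the step that carries the content. You never identify the residual functional, and ``I would try an LP certificate'' does not show it is nonnegative. This is precisely where Chen's estimate fails at $n=8$, and nothing in your outline shows the LP is feasible; two of the eight multipliers in the paper's certificate are below $0.1$.

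\paragraph{What closes it in the paper.}
Put $\beta=S'_{1313}+S'_{2323}$, $\gamma^*=S'_{1414}+S'_{2424}$, $x=S'_{1234}$ and $\tau=\frac14\sum_{p=5}^8(S'_{1p1p}+S'_{2p2p})$, with $N_0,N_1$ as in \eqref{eq:gl-N8}. Then $Z(S)=\beta+\lambda^2\gamma^*-2\lambda x+(1-\lambda^2)N_0+\lambda^2N_1$. The paper subtracts \emph{one half} of the tilted inequality averaged over $p=5,\dots,8$, reflects $e_4$ to get $x\ge0$, and uses the un-tilted frame in the form $x\le\frac12(\beta+\gamma^*)$. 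The remainder $Q(\lambda)$ then has Bernstein control values $Q(1)=\frac12(\beta+\gamma^*-2x)+N_1\ge0$ and $Q(0)\ge Q(0)-\frac x2\ge K:=\frac14(\beta-\gamma^*)-\frac12\tau+N_0$. So everything reduces to $K\ge0$. This is proved by writing $2K$, in the eight normalized traces of \eqref{eq:gl-t8}, as a combination with strictly positive multipliers of the averaged inequalities (E1)--(E8).

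Two features of that certificate are not anticipated by your sketch. First, the averaging group must fix $e_3$ and $e_4$ individually, because the functional weights them differently. Second, the $t_{WW}$ term enters $2K$ with coefficient $24c_3$ (notation of \eqref{eq:gl-const8}) through $\Scal$. The paper absorbs it using (E2), a sign-averaged frame lying entirely in $\mathrm{span}\{e_5,\dots,e_8\}$. That is where $n=8$ is used, and why the argument does not reach $n=7$.

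\paragraph{Smaller inaccuracies.}
\begin{itemize}
\item The $\lambda=0$ value is not an easy endpoint obtainable from Lemma \ref{lem:wpic}(2),(4),(6). It contains $S'_{1313}+S'_{2323}$ and $\Ric_{33}$. In the paper its nonnegativity is a by-product of the certificate, since at $\lambda=0$ one has $Z(S)=\frac12(\beta+\tau)+\frac14(\beta+\gamma^*)+K$.
\item At $\lambda=1$ you also need Lemma \ref{lem:wpic}(3), because $N_1$ contains $2\kappa(\Ric_{11}+\Ric_{22}+\Ric_{33}+\Ric_{44})$. Here $\kappa=\frac{11}{414}$ is the coefficient of $\Ric\wedge\mathrm{id}$ in the shift.
\item The shift produces $2\kappa(\Ric_{33}+\lambda^2\Ric_{44})$, not a common coefficient on $\Ric_{33}+\Ric_{44}$.
\item Splitting $S'=(S'-T)+T$ and discarding $T$ is unnecessary and slightly delicate. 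Neither piece need satisfy the Bianchi identity, so the $\lambda$-frame expression would have to be read as the Hermitian form $A(\varphi,\bar\varphi)$ rather than with $-2\lambda A_{1234}$. It is simpler to note that $T\ge0$ implies $T\in\CPIC$, so $S'$ itself is weakly PIC. Conditions (3) and (4) of Definition \ref{def:pinchingCones1} are then not needed at all.
\end{itemize}
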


    Concatenating the two families along this common endpoint gives a single pinching family, and
    with it the pinching estimate. The two main theorems are then consequences of dimension-free
    machinery applied to this estimate. We record both steps here, since they use none of the
    dimension-restricted input of the cone construction.

    \begin{proposition}[Pinching estimate]\label{prop:pinching}
        Let $n=8$ and let $C(b)$, $\tilde C(b)$, $b_{max,2}$ and $\tilde b_{max,2}$ be as in
        Definitions \ref{def:pinchingCones1} and \ref{def:pinchingCones2}. Put
        $B_n:=b_{max,2}+\tilde b_{max,2}$ and define
        \begin{equation}\label{eq:hatC}
            \hat C(\beta):=
            \begin{cases}
                C(\beta),& 0<\beta\le b_{max,2},\\
                \tilde C(B_n-\beta),& b_{max,2}<\beta<B_n.
            \end{cases}
        \end{equation}
        Then $\{\hat C(\beta)\}_{0<\beta<B_n}$ is a continuous family of closed convex
        $O(n)$-invariant cones, each transversally invariant under $\dt R=Q(R)$, with the following two
        properties. All Ricci inequalities in the sets below are required for every orthonormal pair.
        \begin{enumerate}
            \item \emph{(Entry.)} For every $\theta>0$ and $N\ge0$ there exists $\beta_0\in(0,b_{max,2}]$ such that
            \begin{equation*}
            \begin{split}
                \{R:\Scal(R)\ge0,\ R-\theta\Scal(R)\,\mathrm{id}\wedge\mathrm{id}\in\CPIC\}&\cap
                \{R:\Ric(R)_{11}+\Ric(R)_{22}-\theta\Scal(R)+N\ge0\}\\
                &\subset\{R:R+N\,\mathrm{id}\wedge\mathrm{id}\in\hat C(\beta_0)\}.
            \end{split}
            \end{equation*}
            \item \emph{(Pinching.)} If $\beta_j\nearrow B_n$, there are $\varepsilon_j\searrow0$ with
            \begin{equation*}
                \hat C(\beta_j)\subset
                \{R:R+\varepsilon_j\Scal(R)\,\mathrm{id}\wedge\mathrm{id}\in\CPICone\}.
            \end{equation*}
        \end{enumerate}
        Consequently, for every compact $K\subset\operatorname{int}\CPIC$ and every $T>0$ there exist
        $\theta>0$, $N>0$, an increasing concave function $f>0$ with $\lim_{s\to\infty}f(s)/s=0$, and a
        continuous family $\{F_t\}_{t\in[0,T]}$ of closed convex $O(n)$-invariant sets, invariant under
        $\dt R=Q(R)$, with $K\subset F_0$ and
        \begin{equation}\label{eq:pinchest}
        \begin{split}
            F_t\subset{}&\{R:R-\theta\Scal(R)\,\mathrm{id}\wedge\mathrm{id}\in\CPIC\}\cap
            \{R:\Ric(R)_{11}+\Ric(R)_{22}-\theta\Scal(R)+N\ge0\}\\
            &\cap\{R:R+f(\Scal(R))\,\mathrm{id}\wedge\mathrm{id}\in\CPICtwo\}
        \end{split}
        \end{equation}
        for all $t\in[0,T]$.
    \end{proposition}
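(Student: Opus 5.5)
Assuming Theorems \ref{thm:trans1}, \ref{thm:trans2} and \ref{thm:gluing}, the argument has three parts, following Brendle \cite[\S3--5]{Brendle2019} and Chen \cite[\S5]{Chen2024}: the structural properties of $\hat C(\beta)$, the entry and pinching properties, and the derivation of the pinching estimate by the B\"ohm--Wilking gluing construction.

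\emph{Structure of the family.} Closedness, convexity and $O(n)$-invariance of $C(b)$ and $\tilde C(b)$ are inherited from the defining conditions. Conditions (1)--(3) of Definition \ref{def:pinchingCones1} are convex in $(S,T)$. Condition (4) is convex because the right side of \eqref{eq:gapcond} is a geometric mean of two nonnegative linear functionals, hence concave. The map $l_{a,b}$ is linear and $O(n)$-equivariant. Transversal invariance is exactly Theorems \ref{thm:trans1} and \ref{thm:trans2}. For continuity, $b\mapsto C(b)$ is continuous because all the data in \eqref{data} depend continuously on $b$ and $l_{a,b}$ is invertible. The same holds for $\tilde C$. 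At $\beta=b_{max,2}$ we have $\hat C(b_{max,2})=C(b_{max,2})$, and the limit from above is $\tilde C(\tilde b_{max,2})$; these agree by Theorem \ref{thm:gluing}.

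\emph{Entry and pinching.} For entry, fix $\theta,N$. As $b\to0$ we have $a,\gamma,\rho\to0$, $l_{a,b}\to\mathrm{id}$, and $\omega\sim c\,b^{-1}$ blows up, since $\rho\sim b$ gives $\omega^2\sim b/b^3$. Given $R$ in the left-hand set, put $S=l_{a,b}^{-1}(R+N\,\mathrm{id}\wedge\mathrm{id})$ and choose $T=\delta\,\Scal(S)\,I$ with $\delta$ of order $\theta$. Then:
\begin{itemize}
\item[(1)] holds trivially;
\item[(2)] follows from $R-\theta\Scal\,\mathrm{id}\wedge\mathrm{id}\in\CPIC$, after absorbing the $O(b)$ perturbation from $l_{a,b}^{-1}$;
\item[(3)] follows from the Ricci hypothesis together with the added $N$ term;
\item[(4)] holds for small $b$ because its left side is bounded by $C\Scal$ via Lemma \ref{lem:wpic}, while its right side is $\ge\omega^{1/2}\sqrt{2(n-2)\delta}\,\Scal$, which is large.
\end{itemize}
For pinching, as $\beta\nearrow B_n$ we have $b=B_n-\beta\to0$ in $\tilde C(b)$, and $\sqrt{2a}\to0$. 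Condition \eqref{eq:Z}, combined with the bound $\Ric_{11}+\Ric_{22}\ge-\frac{2\gamma}{n}\Scal$ inherited from (i) and with $l_{a,b}\to\mathrm{id}$, gives the PIC$_1$ inequality \eqref{eq:wpic12} up to an error $O(\sqrt a)\Scal$. We may therefore take $\varepsilon_j=O(\sqrt{a(B_n-\beta_j)})$.

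\emph{Consequence.} Here I would repeat Brendle's Theorem 5.1 argument verbatim. Compactness of $K\subset\operatorname{int}\CPIC$ gives $\theta$ and $N$ such that $K$ lies in the entry set. The invariance of $\{R-\theta\Scal\,\mathrm{id}\wedge\mathrm{id}\in\CPIC\}$ and of the Ricci condition under the ODE for short times follows from the invariance of $\hat C(\beta_0)$ shifted by $N\,\mathrm{id}\wedge\mathrm{id}$. Then apply the B\"ohm--Wilking generalized pinching set construction \cite[Thm.~4.1]{BoehmWilking2008} to the family $\hat C(\beta)$, which pinches toward $\CPICone$. This yields sets pinching toward $\CPICone$ with sublinear defect $f$. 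A second application, using the Brendle--Schoen PIC$_1\Rightarrow$PIC$_2$ pinching family, upgrades this to $\CPICtwo$.

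\emph{Main obstacle.} I expect the main difficulty to be the entry property, in particular verifying the gap condition (4) uniformly under the shift by $N$. The shift makes $\Scal(S)$ large relative to $T$ only when $N$ is large, so the choice of $T$ has to include an $N$-dependent multiple of $I$, and this requires checking that (2) survives that change.
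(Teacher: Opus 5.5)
Your structural discussion and your entry argument are sound and match the paper's. Taking $T=\delta\,\Scal(S)\,I$ for $S=l_{a,b}^{-1}(R+N\,\mathrm{id}\wedge\mathrm{id})$ is, by homogeneity, the paper's choice $T=\eta H$ after normalizing $R_*=(R+NH)/(r+N)$. Your $\omega\sim c\,b^{-1}$ is the paper's $\rho(b)/b\to\frac{57}{64}$.

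The pinching step, however, uses a bound in the wrong direction. Condition \eqref{eq:Z} gives $\Phi_\lambda(S)\ge-\sqrt{2a}\,(1-\lambda^2)\bigl(\Ric(S)_{11}+\Ric(S)_{22}\bigr)$. An error of size $O(\sqrt a)\,\Scal$ therefore needs an \emph{upper} bound on $\Ric(S)_{11}+\Ric(S)_{22}$, and the lower bound $\Ric_{11}+\Ric_{22}\ge-\frac{2\gamma}{n}\Scal$ you cite is of no use. The fix is short:
\begin{itemize}
\item $Z_1=\Phi_1(S)\ge0$ says $S$ is weakly PIC, so Lemma \ref{lem:wpic}(1) gives $\Ric(S)_{11}+\Ric(S)_{22}\le\Scal(S)$.
\item Lemma \ref{lem:wpic}(1),(3) give $|\Ric(S)_{kk}|\le\frac32\Scal(S)$, which bounds the effect of $l_{a,b}(S)-S$ by $O(b)\,\Scal$.
\item $\Phi_\lambda(\mathrm{id}\wedge\mathrm{id})=4(1+\lambda^2)\ge4$ converts the error into $\varepsilon\,\Scal\,\mathrm{id}\wedge\mathrm{id}$, and negative $\lambda$ follow by reflecting $e_4$.
\end{itemize}
The paper gets the same uniformity differently: it normalizes $\Scal(R)=1$ and uses $\tilde C(\tilde b)\subset C(b_{max,2})$ to work in one compact set.

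Your sketch of how $\{F_t\}$ is obtained rests on two claims that are false as stated.

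First, a translate $\hat C(\beta_0)-N\,\mathrm{id}\wedge\mathrm{id}$ of an invariant cone need not be invariant under $\dt R=Q(R)$. For $X=R+N\,\mathrm{id}\wedge\mathrm{id}$ one gets $\dt X=Q(X)-cN\,\Ric(X)\wedge\mathrm{id}+c'N^2\,\mathrm{id}\wedge\mathrm{id}$ with $c,c'>0$. The middle term can push boundary points with $|X|$ comparable to $N$ out of the cone, and Theorems \ref{thm:trans1}--\ref{thm:trans2} give no quantitative control of it. A short-time statement also cannot reach an arbitrary $T$.

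Second, Theorem 4.1 of B\"ohm--Wilking cannot be applied to $K$, because it requires the initial set to lie in a cone of the family. Pulling condition (3) back through $l_{a,b}$ shows that every nonzero element of $C(b)$, hence of every $\hat C(\beta)$, has positive Ricci two-sums. By contrast, $\operatorname{int}\CPIC$ contains tensors with negative two-sums: for instance $A\wedge\mathrm{id}+\epsilon I$ with $A=\frac12\mathrm{diag}(1,\dots,1,-1,-1)$ and small $\epsilon>0$, which is the curvature of $S^6\times H^2$ plus $\epsilon I$. So only $K+N\,\mathrm{id}\wedge\mathrm{id}$ enters $\hat C(\beta_0)$. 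This is exactly why the conclusion is a $T$-dependent family $\{F_t\}_{t\in[0,T]}$ rather than a single pinching set.

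The paper does not rederive this step. It invokes Brendle's continuous-fit procedure (Steps 1 and 3 of \S1 of \cite{Brendle2019}, see \cite[\S1.5]{Chen2024}), which takes precisely the shifted entry property and the pinching property as input. You should cite that construction as a black box rather than the mechanism you describe.
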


    As in the standard pinching-cone construction of \cite[\S5]{Brendle2019} (see also
    \cite[\S1.3]{Chen2024}), the defining inequalities give a continuous family of closed,
    convex, $O(n)$-invariant cones; the dimension-dependent choices of $\varepsilon$ and the
    endpoints do not affect this structural argument. Every nonzero member has positive scalar
    curvature, and $I$ lies in the interior of each cone, by the defining inequalities and the
    trace identities \eqref{eq:lab-traces}. In particular, every compact subfamily,
    after affine reparameterization, satisfies condition $(*)$ of \cite[Thm.~3.3]{ChoLi2022}.
    Transversal invariance follows from
    Theorems \ref{thm:trans1} and \ref{thm:trans2}, and the two branches match at
    $\beta=b_{max,2}$ by Theorem \ref{thm:gluing}.

    For entry, put $H=\mathrm{id}\wedge\mathrm{id}$ and $r=\Scal(R)$. The case $r+N=0$
    has $R=0$ by Lemma \ref{lem:wpic}(6). Otherwise set
    \begin{equation*}
        R_*:=\frac{R+NH}{r+N},\qquad t:=\frac{r}{r+N}\in[0,1].
    \end{equation*}
    The hypotheses give, for every orthonormal pair,
    \begin{equation*}
    \begin{gathered}
        R_*-(\theta t+1-t)H\in\CPIC,\qquad
        \Ric(R_*)_{11}+\Ric(R_*)_{22}\ge\theta t+(4n-5)(1-t),\\
        \Scal(R_*)=t+2n(n-1)(1-t)\in[1,2n(n-1)].
    \end{gathered}
    \end{equation*}
    Lemma \ref{lem:wpic}(6) implies that bounded scalar-curvature slices of $\CPIC$ are compact:
    an unbounded sequence, divided by its norm, would limit to a nonzero weakly PIC tensor
    of scalar curvature zero. Thus all these $R_*$ lie in a fixed compact subset of
    $\operatorname{int}\CPIC$, with a uniform positive lower bound for the sum of any two
    Ricci eigenvalues. As $b\to0$, $l_{a(b),b}^{-1}\to\mathrm{id}$ and $\omega(b)\to\infty$;
    the latter follows from
    \begin{equation*}
        \frac{\rho(b)}b\longrightarrow\frac{n^2-n+1}{n^2}=\frac{57}{64}.
    \end{equation*}
    Consequently, for all sufficiently small $b>0$, the tensors $S_b=l_{a(b),b}^{-1}(R_*)$
    have uniformly positive scalar curvature and two-eigenvalue sums, and
    $S_b-\eta H\in\CPIC$ for a fixed $\eta>0$. Choose $T=\eta H$ in Definition
    \ref{def:pinchingCones1}. Conditions (1)--(3) hold, while
    \begin{equation*}
        \sum_{p=3}^n(T_{1p1p}+T_{2p2p})=4(n-2)\eta>0.
    \end{equation*}
    The Ricci differences of $S_b$ are uniformly bounded, so $\omega(b)\to\infty$ gives
    condition (4) for all sufficiently small $b$. Fixing one such $b=\beta_0$ and using
    homogeneity proves entry, also when $N=0$.
    The scalar-curvature assumption is automatic when $0<\theta<1/(2n(n-1))$, since
    $\Scal(R-\theta rH)=(1-2n(n-1)\theta)r\ge0$. This is the small-parameter setting of
    the entry argument in the proof of \cite[Theorem~5.1]{Brendle2019}.

    For the pinching property, normalize $R\in\tilde C(\tilde b)\setminus\{0\}$ by
    $\Scal(R)=1$. Since $\tilde C(\tilde b)\subset C(b_{max,2})$, these tensors lie in a fixed
    compact set. Put $S=l_{\tilde a,\tilde b}^{-1}(R)$. As $\tilde b\to0$, both $S$ and its
    Ricci tensor are uniformly bounded, and $S-R\to0$ uniformly. Thus \eqref{eq:Z} implies
    that the left-hand side of \eqref{eq:wpic12}, evaluated on $R$, is at least
    $-\delta(\tilde b)$ for every frame and $\lambda\in[0,1]$, where $\delta(\tilde b)\ge0$
    and $\delta(\tilde b)\to0$. Its value on $H$ is $4(1+\lambda^2)\ge4$, so
    $R+\frac14\delta(\tilde b)H\in\CPICone$; negative $\lambda$ follow by reflecting $e_4$.
    Homogeneity and a decreasing upper envelope give the stated sequence $\varepsilon_j$.
    The family
    $\{F_t\}$ is then obtained from $\{\hat C(\beta)\}$ by the continuous-fit procedure of Steps $1$
    and $3$ of \cite[\S1]{Brendle2019} (see \cite[\S1.5]{Chen2024}), which uses only the two
    properties just stated.

    \paragraph{Derivation of Theorem \ref{thm:main1} (ancient solutions).}
    Let $(M^n,g(t))$, $t\in(-\infty,0]$, be as in Theorem \ref{thm:main1}. Uniformly PIC gives a
    constant $\theta>0$ with $R(g(t))-\theta\Scal(g(t))\,\mathrm{id}\wedge\mathrm{id}\in\CPIC$ for all $t$.
    We use the following imported results. (I1) A complete noncompact $\kappa$-noncollapsed ancient
    solution with uniformly PIC has $2$-positive Ricci curvature,
    $\Ric_{11}+\Ric_{22}\ge\delta\Scal$ for some $\delta>0$ \cite[Prop.~3.1]{ChoLi2022}.
    (I2) If the curvature operator lies in $\hat C(\beta_0)$ for some $\beta_0>0$, the
    cone-continuation theorem places it in $\hat C(\beta)$ for every $\beta<B_n$
    \cite[Thm.~3.3]{ChoLi2022}. (I3) A nonflat ancient solution with weakly PIC$_1$ has weakly
    PIC$_2$ \cite[Prop.~6.2]{LiNi2020}; compare \cite[Lemma~4.2]{BamlerCabezasRivasWilking2019},
    which assumes bounded curvature. (I4) Under completeness, noncompactness,
    $\kappa$-noncollapsing, uniformly PIC and weakly PIC$_2$, bounded curvature follows from
    \cite[Prop.~5.6]{ChoLi2022}, and the solution is a quotient of the shrinking round cylinder
    $S^{n-1}\times\mathbb R$ by standard isometries or the Bryant soliton
    \cite[Thm.~5.7]{ChoLi2022}; see also \cite{BrendleNaff}. By (I1) and the entry
    property with $N=0$, using $\Scal(g(t))>0$ and shrinking $\theta\le\delta$, $R(g(t))\in\hat C(\beta_0)$ for
    some $\beta_0>0$ and all $t$. Then (I2) and the pinching property give
    $R(g(t))\in\CPICone$ for all $t$. Thus (I3) gives weakly PIC$_2$, and (I4) supplies bounded
    curvature and the stated classification.

    \paragraph{Derivation of Theorem \ref{thm:main2} (compact classification).}
    Let $(M^n,g_0)$ be as in Theorem \ref{thm:main2} and scale $g_0$ so its curvature operator lies
    in a fixed compact $K\subset\operatorname{int}\CPIC$. Proposition \ref{prop:pinching} supplies the pinching
    estimate \eqref{eq:pinchest} for arbitrarily large $T$. As observed by Chen \cite[\S1.5]{Chen2024},
    in Brendle's program the ancient-solution analysis is valid for all $n\ge5$, and the surgery
    process uses the assumption $n\ge12$ only through the pinching estimate; Proposition
    \ref{prop:pinching} supplies the required estimate at $n=8$. The Ricci-flow-with-surgery and
    finite-extinction machinery of \cite{Brendle2019} therefore applies and produces a finite surgery
    program after which the flow becomes extinct. The topology thereby obtained is read off from the
    analysis of such finite-surgery programs in \cite{Brendle2019}, giving the stated diffeomorphism
    type as a finite connected sum of
    quotients of $S^n$ and $S^{n-1}\times\mathbb R$ by standard isometries. The hypothesis on
    incompressible $(n-1)$-dimensional space forms excludes the remaining admissible summands.

\section{Proof of Theorem \ref{thm:trans1}}
\label{sec:proof1}

For $9\le n\le 11$, the transversal invariance of each cone $C(b)$ is proved in Section 3 of
\cite{Chen2024} (Theorem 3.3 there). Besides the three scalar facts emphasized there (the
monotonicity of $g,h$, the bounds for $\rho$, and the inequalities of Lemma 3.6), the proof also
uses the positivity of the coefficient in \eqref{eq:remaining-coeff} and the comparison in
\eqref{eq:chen-C} below. We verify these facts at $n=8$ (Subsection \ref{subsec:proof1-8})
and at $n=7$ (Subsection \ref{subsec:proof1-7}). Together with the
dimension-free remainder of the argument of \cite{Chen2024}, this proves Theorem \ref{thm:trans1}.

\subsection{The case \texorpdfstring{$n=8$}{n=8}}
\label{subsec:proof1-8}

Throughout this subsection $n=8$, $b_{max}=\frac1{18}$ and $0<b\le b_{max}$. Substituting $n=8$
into \eqref{data} gives
\begin{equation*}
    a=\frac{2b(1+3b)^2}{2+5b},
\end{equation*}
and with
\begin{equation*}
    q:=42b^2+21b+2,\qquad f:=216b^3+144b^2+29b+2,
\end{equation*}
we have $1+12a=\frac{f}{2+5b}$, $1+14a=\frac{(1+6b)q}{2+5b}$ and
$\rho=\frac{3b(252b^2+140b+19)}{32q}$.

\begin{lemma}[Lemma 3.4 of \cite{Chen2024} at $n=8$]\label{lem:monotonicity-8}
    Let $n=8$ and $0<b\le b_{max}$. Then the two functions
    \begin{equation*}
        g(b)=\frac{(1+2(n-2)a)^2}{1+2(n-1)a}\cdot\frac{2+(n-3)b}{1+(n-2)b}
        \qquad\text{and}\qquad
        h(b)=\frac{(1+2(n-1)a)^2}{(1+2(n-2)a)(1+(n-2)b)^2}
    \end{equation*}
    are both strictly increasing in $b$.
\end{lemma}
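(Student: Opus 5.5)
The plan is to reduce both monotonicity claims to the positivity of explicit polynomials on $[0,\frac1{18}]$, using the closed forms recorded just before the lemma. With $n=8$ we have $2+(n-3)b=2+5b$ and $1+(n-2)b=1+6b$. Also $1+2(n-2)a=1+12a=\frac{f}{2+5b}$ and $1+2(n-1)a=1+14a=\frac{(1+6b)q}{2+5b}$. Substituting, the factors of $2+5b$ and $1+6b$ cancel to a large extent, and I expect
\begin{equation*}
    g(b)=\frac{f^2}{(1+6b)^2\,q},\qquad h(b)=\frac{q^2}{(2+5b)\,f},
\end{equation*}
where $q=42b^2+21b+2$ and $f=216b^3+144b^2+29b+2$ are positive for $b\ge0$. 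I will double-check these two identities directly from \eqref{data} before relying on them.

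Since $g,h>0$, strict monotonicity is equivalent to positivity of the logarithmic derivatives:
\begin{equation*}
    \frac{g'}{g}=\frac{2f'}{f}-\frac{12}{1+6b}-\frac{q'}{q},\qquad
    \frac{h'}{h}=\frac{2q'}{q}-\frac{5}{2+5b}-\frac{f'}{f}.
\end{equation*}
Clearing the positive denominators turns these into polynomial inequalities:
\begin{equation*}
    N_g:=2f'(1+6b)q-12fq-(1+6b)fq'>0,\qquad N_h:=2q'(2+5b)f-5fq-(2+5b)f'q>0.
\end{equation*}
Each has degree at most five in $b$. A first sanity check at $b=0$ gives $g'/g(0)=29-12-\frac{21}{2}=\frac{13}{2}>0$ and $h'/h(0)=21-\frac52-\frac{29}{2}=4>0$. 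Hence both constant terms are positive, namely $N_g(0)=26$ and $N_h(0)=32$.

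I then expand $N_g$ and $N_h$ fully, using exact integer arithmetic, and list their coefficients. If all coefficients are nonnegative, the lemma follows for all $b>0$, not just $b\le b_{max}$. The main obstacle I expect is that some higher-order coefficients may be negative. The $-12fq$ and $-5fq$ terms compete with the leading parts of the positive terms, and the leading coefficients need not cancel favourably. In that case I use the restriction $0<b\le\frac1{18}$. For a negative coefficient $c_k<0$ with $k\ge1$, I bound $c_kb^k\ge c_k\,18^{-(k-1)}b$, or against the constant term via $b^k\le 18^{-k}$. I then check that the constant and linear terms, which are positive, absorb the total. Since $b\le\frac1{18}$ is small, the margins should be comfortable. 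If a crude absorption fails, I would substitute $b=\frac{s}{18}$ with $s\in[0,1]$ and expand in the Bernstein basis on $[0,1]$; nonnegativity of all Bernstein coefficients, with a positive one at $s=0$, gives positivity. This is the same device the paper uses elsewhere, and it is a finite exact check.
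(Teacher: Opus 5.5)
Your plan matches the paper's proof: the same closed forms $g=f^2/((1+6b)^2q)$ and $h=q^2/((2+5b)f)$, followed by differentiation. Your cleared numerators are exactly the paper's polynomials, $N_g=108864b^5+117936b^4+48672b^3+9450b^2+837b+26$ and $N_h=4(1+4b)(189b^3+174b^2+60b+8)$, with $g'=fN_g/((1+6b)^3q^2)$ and $h'=qN_h/((2+5b)^2f^2)$; note that $N_h$ actually has degree four. All of these coefficients are positive, so monotonicity holds for every $b>0$, and your fallback via $b\le\frac1{18}$ or a Bernstein expansion is never needed.
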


\begin{proof}
    Direct substitution gives
    \begin{equation*}
        g(b)=\frac{f(b)^2}{(1+6b)^2q(b)},\qquad h(b)=\frac{q(b)^2}{(2+5b)f(b)},
    \end{equation*}
    and differentiation and factorization yield
    \begin{equation*}
        g'(b)=\frac{f(b)\bigl(108864b^5+117936b^4+48672b^3+9450b^2+837b+26\bigr)}
        {(1+6b)^3q(b)^2}>0,
    \end{equation*}
    \begin{equation*}
        h'(b)=\frac{4(1+4b)q(b)\bigl(189b^3+174b^2+60b+8\bigr)}{(2+5b)^2f(b)^2}>0.
    \end{equation*}
    All coefficients occurring in the numerators and denominators are positive, so $g'(b)>0$ and
    $h'(b)>0$ for all $b>0$. For later use we record the endpoint values
    \begin{equation*}
        f\Bigl(\frac1{18}\Bigr)=\frac{221}{54},\qquad q\Bigl(\frac1{18}\Bigr)=\frac{89}{27},\qquad
        g\Bigl(\frac1{18}\Bigr)=\frac{48841}{17088},\qquad
        h\Bigl(\frac1{18}\Bigr)=\frac{31684}{27183}.
    \end{equation*}
\end{proof}

\begin{lemma}[Lemma 3.5 of \cite{Chen2024} at $n=8$]\label{lem:gapcond-8}
    Let $n=8$ and $0<b\le b_{max}$. Then
    \begin{equation*}
        \rho'(b)\ge\rho'\Bigl(\frac1{18}\Bigr)=\frac{5625}{7921}>\frac49,\qquad
        0<\rho(b)<b,\qquad
        \rho\Bigl(\frac1{18}\Bigr)=\frac{31}{712}.
    \end{equation*}
\end{lemma}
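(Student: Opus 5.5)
The plan is to work with the closed form $\rho=\frac{3b(252b^2+140b+19)}{32q}$, $q=42b^2+21b+2$, recorded just before Lemma~\ref{lem:monotonicity-8}. I would first re-derive it from \eqref{data} at $n=8$ as a sanity check, using $a=\frac{2b(1+3b)^2}{2+5b}$, $\gamma=\frac{b}{2+5b}$ and $1+14a=\frac{(1+6b)q}{2+5b}$. Everything then reduces to rational-function inequalities in the single variable $b\in(0,\frac1{18}]$.

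\emph{Endpoint value and the bounds $0<\rho<b$.} At $b=\frac1{18}$ one has $252b^2+140b+19=\frac{248}{9}$ and $q=\frac{89}{27}$, which gives $\rho(\frac1{18})=\frac{31}{712}$. Positivity of $\rho$ is immediate, since all coefficients are positive. The inequality $\rho<b$ is equivalent, after multiplying by $32q/b>0$, to
\begin{equation*}
3(252b^2+140b+19)<32(42b^2+21b+2),
\end{equation*}
that is, $0<588b^2+252b+7$. This holds for all $b>0$.

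\emph{The derivative.} A quotient-rule computation gives
\begin{equation*}
\rho'(b)=\frac{3\,(10584b^4+10584b^3+3654b^2+560b+38)}{32\,q(b)^2}.
\end{equation*}
Here the numerator comes from $(756b^2+280b+19)q-(252b^3+140b^2+19b)(84b+21)$. Substituting $b=\frac1{18}$ and using $q(\frac1{18})=\frac{89}{27}$ should yield $\rho'(\frac1{18})=\frac{5625}{7921}=(75/89)^2$. I would confirm this by exact arithmetic.

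To prove $\rho'(b)\ge\rho'(\frac1{18})$ on $(0,\frac1{18}]$, I would clear denominators and study the quartic
\begin{equation*}
\Pi(b):=3\cdot7921\,(10584b^4+10584b^3+3654b^2+560b+38)-32\cdot5625\,q(b)^2 .
\end{equation*}
The claim is equivalent to $\Pi\ge0$ on $[0,\frac1{18}]$. Since $\Pi(\frac1{18})=0$, one can factor $\Pi(b)=(1-18b)\,\Pi_1(b)$ with $\Pi_1$ a cubic. It then suffices to show $\Pi_1>0$ on $[0,\frac1{18}]$. I would try to do this by writing $\Pi_1$ in the variable $s=18b\in[0,1]$ and checking the signs of its coefficients, or its Bernstein coefficients on $[0,1]$. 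Alternatively, I would bound the negative coefficients against $\Pi_1(0)>0$ using $b\le\frac1{18}$.

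A fallback is to show directly that $\rho''<0$ on the interval, i.e.\ that $N_2'q-2N_2q'<0$ with $N_2$ the quartic numerator above. I would not pursue this first, since it is a higher-degree sign check.

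\emph{The final comparison and the main obstacle.} The inequality $\frac{5625}{7921}>\frac49$ is the integer comparison $50625>31684$. The only step with any real content is the positivity of the cofactor $\Pi_1$. I expect it to hold with a comfortable margin, because $\rho'$ decreases only mildly, from $\rho'(0)=\frac{3\cdot38}{32\cdot4}=\frac{57}{64}$ to about $0.71$. Still, a possible negative coefficient in $\Pi_1$ is where care is needed, and the Bernstein or endpoint-domination check would handle it.
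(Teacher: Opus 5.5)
Your proposal is correct. The arithmetic checks out: your numerator $10584b^4+10584b^3+3654b^2+560b+38$ is exactly twice the paper's, matched by $32$ in place of $16$ in the denominator, and the endpoint values agree. Your reduction of $\rho<b$ to $588b^2+252b+7=7(84b^2+36b+1)>0$ is the paper's identity for $b-\rho$.

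The genuine difference is in the derivative bound. The paper proves $\rho''<0$, so $\rho'$ is decreasing and the bound is read off at the endpoint. You instead compare $\rho'(b)$ with $\rho'(\frac1{18})$ directly by factoring the cleared-denominator polynomial. Your route goes through without the difficulty you anticipated: one finds
\begin{equation*}
\Pi(b)=42\bigl(4357-43160b-542619b^2-1571724b^3-1571724b^4\bigr)=42(1-18b)\bigl(87318b^3+92169b^2+35266b+4357\bigr).
\end{equation*}
So the cofactor has only positive coefficients. In fact $\Pi$ itself has a single sign change in its coefficients, hence is decreasing, and $\Pi(\frac1{18})=0$ already suffices.

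The fallback you expected to be the harder sign check is actually the shortest route. The numerator satisfies $10584b^4+10584b^3+3654b^2+560b+38=6q^2+14(1+4b)$, so $\rho'=\frac9{16}+\frac{21(1+4b)}{16q^2}$. One differentiation then gives the paper's $\rho''=-\frac{21(252b^2+126b+17)}{8q^3}$. The same identity also explains the compact form $\Pi=42\bigl(7921(1+4b)-891q^2\bigr)$.

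As for what each approach buys:
\begin{itemize}
\item The paper's concavity statement does not depend on the endpoint. It shows $\rho'$ is decreasing for all $b>0$, so the analogous bound holds for any choice of $b_{max}$ with no further computation.
\item Your factorization certifies only the comparison with the specific endpoint $\frac1{18}$. That is all the lemma asserts, but it would have to be redone for any other endpoint.
\end{itemize}
Your direct proof of $\rho>0$ from the positive coefficients is an equivalent, slightly quicker substitute for the paper's argument via $\rho(0)=0$ and $\rho'>0$.
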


\begin{proof}
    Differentiating the closed form of $\rho$ gives
    \begin{equation*}
        \rho'(b)=\frac{3\bigl(5292b^4+5292b^3+1827b^2+280b+19\bigr)}{16q(b)^2},
        \qquad
        \rho''(b)=-\frac{21\bigl(252b^2+126b+17\bigr)}{8q(b)^3}<0.
    \end{equation*}
    Hence $\rho'$ is decreasing, so $\rho'(b)\ge\rho'(\frac1{18})=\frac{5625}{7921}>\frac49$.
    Since $\rho(0)=0$ and $\rho'>0$ we have $\rho>0$, and
    \begin{equation*}
        b-\rho(b)=\frac{7b\bigl(84b^2+36b+1\bigr)}{32q(b)}>0,
    \end{equation*}
    so $\rho<b$. Direct calculation shows that $\rho(\frac1{18})=\frac{31}{712}$.
\end{proof}

\begin{lemma}[Lemma 3.6 of \cite{Chen2024} at $n=8$]\label{lem:gapcond2-8}
    Let $n=8$ and $0<b\le b_{max}$. Then, with $a,\gamma,\omega,A,P,Q$ as in \eqref{data},
    \begin{equation}\label{eq:three-8}
        (1+b\sqrt{n-2})^2<1+\frac{A}{2}\sqrt{n(n-2)},
    \end{equation}
    \begin{equation}\label{eq:three-8b}
        \left(1+\frac{A}{2}\sqrt{n(n-2)}\right)^2\frac{1}{P+nQ}<\frac{\omega}{4},
    \end{equation}
    and
    \begin{equation}\label{eq:three-8c}
        \left(1+\frac{A}{2}\sqrt{n(n-2)}\right)\frac{1}{P}<\frac{\omega}{4}.
    \end{equation}
\end{lemma}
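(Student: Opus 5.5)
The plan is to reduce all three inequalities to explicit one-variable statements in $b\in(0,\frac1{18}]$, using the closed forms already recorded at $n=8$: $a=\frac{2b(1+3b)^2}{2+5b}$, $1+14a=\frac{(1+6b)q}{2+5b}$ and $\rho=\frac{3b(252b^2+140b+19)}{32q}$. From these,
\[
P=\frac{2(1+6b)(2+5b)}{q},\qquad P+nQ=2+28a,
\]
and $A=\frac{1+4b}{14}+\frac12(2b+6a)$ is a rational function of $b$. The three inequalities are monotone comparisons. Set $L(b)=1+\frac A2\sqrt{48}=1+2\sqrt3\,A$. The left sides of \eqref{eq:three-8b} and \eqref{eq:three-8c} are then $L^2/(2+28a)$ and $L/P$, both bounded and continuous on $[0,\frac1{18}]$.

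The right side behaves differently. Since $\rho\sim\frac{57}{64}b$, we get $\omega^2\sim\mathrm{const}\cdot b\cdot b^{-3}=\mathrm{const}\cdot b^{-2}$, so $\omega\to\infty$ as $b\to0$. The natural approach is therefore to show that $\omega$ is decreasing in $b$ and that the left sides are increasing, so each inequality reduces to its value at $b=\frac1{18}$.

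For $\omega$, write
\[
\omega^2=(1-\varepsilon)^2\frac{27}{8}\cdot\frac{(2+6b)\,b\,(1+6b)^2}{64\,\rho^3(2+5b)^2}.
\]
Substituting $\rho$ makes this an explicit rational function of $b$, namely
\[
\omega^2=\mathrm{const}\cdot\frac{(1+3b)(1+6b)^2q^3}{b^2(252b^2+140b+19)^3(2+5b)^2}.
\]
To prove it is decreasing, I would either differentiate and factor the numerator, as in Lemma \ref{lem:monotonicity-8}, or bound the factors separately. Using Lemma \ref{lem:gapcond-8} ($\rho<b$, and $\rho$ increasing) only gives a lower bound of the form $\omega^2\ge c\,b^{-2}\cdot(\text{bounded})$. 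That suffices if the margin is comfortable, via $\omega(b)\ge\omega$-type bounds evaluated at the endpoint.

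For the left sides, $a$ and $A$ are increasing (positive coefficients), so $L$ increases. For \eqref{eq:three-8c}, $1/P=q/(2(1+6b)(2+5b))$ decreases, so I would bound $L/P\le L(\frac1{18})\cdot\sup 1/P$. A cruder approach, which is simpler to verify, is to bound each left side by its value at the endpoint for increasing factors and at $b=0$ for decreasing factors. For \eqref{eq:three-8b}, $1/(2+28a)$ is decreasing, so $L^2/(2+28a)\le L(\frac1{18})^2/2$.

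Inequality \eqref{eq:three-8} is elementary. Its left side is $1+2\sqrt6\,b+6b^2$ and its right side is $1+2\sqrt3\,A$ with $A\ge\frac1{14}+b+\cdots$. I would compare coefficients: $2\sqrt3\cdot\frac1{14}>0$ handles small $b$, and the endpoint check at $b=\frac1{18}$ handles the rest. Alternatively, clear the common $1$ and show $2\sqrt6\,b+6b^2<2\sqrt3A$ directly, noting that $2\sqrt3 A\ge\frac{\sqrt3}{7}+2\sqrt3\,b$ already beats $2\sqrt6\,b+6b^2$ for $b\le\frac1{18}$ by numerics.

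The main obstacle is the endpoint check of \eqref{eq:three-8b} and \eqref{eq:three-8c} against $\omega(\frac1{18})/4$, where the paper warns that the margins are small. I would compute $\rho(\frac1{18})=\frac{31}{712}$ exactly and evaluate $\omega(\frac1{18})$ with $\varepsilon=10^{-3}$. Then $L(\frac1{18})$, $P(\frac1{18})$ and $2+28a(\frac1{18})$ are exact rationals, apart from $\sqrt3$ in $L$. Squaring both sides removes the square root in $\omega$, and isolating $\sqrt3$ and squaring again turns each check into a rational inequality that can be verified exactly. If the crude bounding of the decreasing factors loses too much, I would fall back on direct monotonicity of the full ratios, such as $L^2/((2+28a)\omega)$, by differentiation and factorization of the resulting polynomial.
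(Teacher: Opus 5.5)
Your overall architecture (left sides increasing, $\omega$ decreasing on $(0,\frac1{18}]$, then an exact check at $b=\frac1{18}$) is the paper's, and your argument for \eqref{eq:three-8} via $2\sqrt3A\ge\frac{\sqrt3}{7}+2\sqrt3\,b$ is fine. However, the concrete estimates you propose for \eqref{eq:three-8b} and \eqref{eq:three-8c} do not close.

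At $b=\frac1{18}$ one has $\omega/4\approx1.330$, against $L^2/(2+28a)\approx1.237$ and $L/P\approx1.186$. There is therefore no room to evaluate increasing and decreasing factors at opposite ends, and your bound $L^2/(2+28a)\le L(\frac1{18})^2/2\approx2.39$ fails outright.

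For \eqref{eq:three-8c} the monotonicity is misidentified. The function $1/P=q/(2(1+6b)(2+5b))$ is \emph{increasing}: the derivative of $q/((1+6b)(2+5b))$ has numerator $84b^2+48b+8$, so $1/P$ rises from $\frac12$ to $\frac{89}{164}$. Taking its supremum at $b=0$ would give the invalid bound $L(\frac1{18})/2\approx1.09$, which is below the true endpoint value. Once corrected, $L/P$ is a product of increasing factors and the endpoint check works.

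Your alternative of bounding $\omega$ through $\rho<b$ also fails. At $b=\frac1{18}$ it costs the factor $(\rho/b)^{3/2}\approx0.69$, dropping $\omega/4$ to about $0.92$, below both left sides. You need the actual monotonicity of $\omega$, which does hold.

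The missing step is therefore that $L^2/(1+14a)$ is increasing. Your fallback, differentiating the full ratio $L^2/((2+28a)\omega)$ and mixing $\sqrt3$ with the square root in $\omega$, only gestures at this. The fact is true and has a short proof. With $A'=\frac97+3a'$ one finds
\[
2L'(1+14a)-14a'L=\frac{36\sqrt3}{7}(1+14a)+(10\sqrt3-14)\,a'+12\sqrt3\,(7a-3b)\,a'>0,
\]
since $a\ge b$ and $a'>0$.

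The paper instead removes $\sqrt3$ by a rational bound. From $2\sqrt3<\frac72$ and $A\le\frac{1+4a}{14}+4a$ it gets $1+\Lambda<\frac54(1+12a)$. It then factors $\frac{\omega}{4}=R(b)\,\frac{1+6b}{2(2+5b)}$, where $R$ is decreasing because $b(2+6b)/\rho^3$ is. After this, the two left sides are bounded by $\frac{25}{16}g$ and $\frac54gh$, which are already increasing by Lemma \ref{lem:monotonicity-8}. The remaining endpoint margin for \eqref{eq:three-8b} is then only about $2\%$, which shows how little slack any cruder bounding can afford.
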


\begin{proof}
    Write $\Lambda:=\frac{A}{2}\sqrt{48}=2\sqrt3\,A$, where
    $A=\frac{756b^3+594b^2+125b+2}{14(2+5b)}$. Also,
    \begin{equation*}
        P=\frac{2(1+6b)^2}{1+14a},\qquad P+8Q=2(1+14a).
    \end{equation*}
    Define
    \begin{equation*}
        R(b):=\frac{1-\varepsilon}{2}\sqrt{\frac{27b(2+6b)}{8\cdot 8^2\,\rho(b)^3}},
    \end{equation*}
    so that factoring $\frac{1+6b}{2+5b}$ out of $\omega$ gives
    \begin{equation}\label{eq:omegaR8}
        \frac{\omega}{4}=R(b)\,\frac{1+6b}{2(2+5b)}.
    \end{equation}

    For \eqref{eq:three-8}, the bound
    \begin{equation*}
        A-4b=\frac{756b^3+314b^2+13b+2}{14(2+5b)}>0
    \end{equation*}
    gives $\Lambda=2\sqrt3\,A>2\sqrt{48}\,b>12b$, and for $b\in(0,\frac1{18}]$ we have
    \begin{equation*}
        1+12b-(1+b\sqrt6)^2=b\bigl(12-2\sqrt6-6b\bigr)>0.
    \end{equation*}
    Thus $(1+b\sqrt6)^2<1+12b<1+\Lambda$, which is \eqref{eq:three-8}.

    For \eqref{eq:three-8b} and \eqref{eq:three-8c}, observe that
    \begin{equation*}
        \frac{2(1+4a)}{28}+4a-A=\frac{9b^2(18b+7)}{7(2+5b)}\ge0,
    \end{equation*}
    and hence
    \begin{equation}\label{eq:Lambda8bound}
        \Lambda=2\sqrt3\,A
        <\frac72\Bigl(\frac{2(1+4a)}{28}+4a\Bigr)
        =\frac54(1+12a)-1,
    \end{equation}
    so that $1+\Lambda<\frac54(1+12a)$.
    Using \eqref{eq:omegaR8} and the identities for $P$ and $P+8Q$ above, the inequalities
    \eqref{eq:three-8b} and \eqref{eq:three-8c} are respectively equivalent to
    \begin{equation}\label{eq:three-8-red}
        \frac{(1+\Lambda)^2(2+5b)}{(1+14a)(1+6b)}<R(b),
        \qquad
        \frac{(1+\Lambda)(1+14a)(2+5b)}{(1+6b)^3}<R(b).
    \end{equation}
    By \eqref{eq:Lambda8bound}, the left-hand sides of \eqref{eq:three-8-red} are bounded above by
    $\bigl(\frac54\bigr)^2g(b)$ and $\frac54\,g(b)h(b)$ respectively, which are increasing by Lemma
    \ref{lem:monotonicity-8}. On the other hand,
    \begin{equation*}
        \frac{d}{db}\left(\frac{b(2+6b)}{\rho(b)^3}\right)
        =-\frac{65536\,q(b)^2\bigl(31752b^5+49392b^4+28014b^3+7686b^2+1115b+76\bigr)}
        {27b^3\bigl(252b^2+140b+19\bigr)^4}<0,
    \end{equation*}
    so $R$ is decreasing, and it suffices to check the right endpoint $b=\frac1{18}$. There,
    \begin{equation*}
        \left(\frac54\right)^2g\Bigl(\frac1{18}\Bigr)=\frac{1221025}{273408},
        \qquad
        \frac54\,g\Bigl(\frac1{18}\Bigr)h\Bigl(\frac1{18}\Bigr)=\frac{98345}{23616},
        \qquad
        R\Bigl(\frac1{18}\Bigr)^2=\frac{4924918368783}{238328000000},
    \end{equation*}
    and all quantities being positive, squaring gives the exact margins
    \begin{equation*}
        R\Bigl(\frac1{18}\Bigr)^2-\left[\left(\frac54\right)^2g\Bigl(\frac1{18}\Bigr)\right]^2
        =\frac{25045842231370311401}{34795857494016000000}>0,
    \end{equation*}
    \begin{equation*}
        R\Bigl(\frac1{18}\Bigr)^2-\left[\frac54\,g\Bigl(\frac1{18}\Bigr)h\Bigl(\frac1{18}\Bigr)\right]^2
        =\frac{862611084879662129}{259607830464000000}>0.
    \end{equation*}
    The opposite monotonicities now prove \eqref{eq:three-8-red} throughout $(0,\frac1{18}]$, and
    hence \eqref{eq:three-8b} and \eqref{eq:three-8c}.
\end{proof}

\subsection{The case \texorpdfstring{$n=7$}{n=7}}
\label{subsec:proof1-7}

Throughout this subsection $n=7$. By Definition \ref{def:pinchingCones1}, the data
\eqref{data} use $\varepsilon=\varepsilon_7:=10^{-4}$ on the range
\begin{equation*}
    0<b\le\beta_7:=b_{max,2}=\frac{131}{2000}.
\end{equation*}
With the abbreviations
\begin{equation*}
    B:=15b^2+9b+1,\qquad C:=125b^3+100b^2+24b+2,
\end{equation*}
the data reduce to
\begin{equation}\label{eq:data7}
    \begin{split}
        a&=\frac{b(2+5b)^2}{4(1+2b)},\qquad
        \gamma=\frac{b}{2(1+2b)},\qquad
        \rho=\frac{b\bigl(375b^2+255b+43\bigr)}{49B},\\
        A&=\frac{1125b^3+1100b^2+294b+7}{63(1+2b)},\qquad
        P=\frac{2(1+2b)(1+5b)}{B},\\
        Q&=\frac{2b(3b+1)(5b+1)(5b+2)(15b+7)}{7(1+2b)B},\qquad
        \omega=(1-\varepsilon_7)\sqrt{\frac{27b(2+5b)(1+5b)^2}{392\rho^3(2+4b)^2}}.
    \end{split}
\end{equation}

\begin{lemma}[Lemma 3.4 of \cite{Chen2024} at $n=7$]\label{lem:monotonicity-7}
    Let $n=7$ and $0<b\le\beta_7$. Then $g$ and $h$ are both strictly increasing in $b$.
\end{lemma}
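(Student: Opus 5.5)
The plan is to follow the proof of Lemma \ref{lem:monotonicity-8} verbatim: substitute $n=7$, reduce $g$ and $h$ to rational functions of $b$ built from the abbreviations $B$ and $C$ of \eqref{eq:data7}, and then show that the numerators of their logarithmic derivatives are polynomials with positive coefficients. This gives monotonicity for all $b>0$, in particular on $(0,\beta_7]$, with no endpoint analysis.

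\emph{Step 1 (closed forms).} At $n=7$ we have $2+(n-3)b=2(1+2b)$ and $1+(n-2)b=1+5b$, with $a=\frac{b(2+5b)^2}{4(1+2b)}$. A direct computation gives
\begin{equation*}
    1+12a=\frac{75b^3+60b^2+14b+1}{1+2b}=\frac{(1+5b)B}{1+2b},\qquad
    1+10a=\frac{C}{2(1+2b)} .
\end{equation*}
The first identity is consistent with the formula for $P$ in \eqref{eq:data7}. Substituting these into the definitions of $g$ and $h$ in Lemma \ref{lem:monotonicity-8} yields
\begin{equation*}
    g(b)=\frac{C^2}{2(1+5b)^2B},\qquad h(b)=\frac{2B^2}{(1+2b)C}.
\end{equation*}

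\emph{Step 2 (log-derivatives).} Since $B,C>0$ for $b\ge0$, the sign of $g'$ is the sign of
\begin{equation*}
    N_g:=(1+5b)\bigl(2C'B-B'C\bigr)-10BC,
\end{equation*}
and the sign of $h'$ is the sign of
\begin{equation*}
    N_h:=(1+2b)\bigl(2B'C-BC'\bigr)-2BC,
\end{equation*}
where $B'=30b+9$ and $C'=375b^2+200b+24$. I will expand both polynomials, of degree at most five, and check the coefficients. The constant terms are already positive: $N_g(0)=(48-18)-20=10$ and $N_h(0)=36-24-4=8$. I expect all remaining coefficients to be positive as well, as happened at $n=8$.

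\emph{Step 3 (fallback).} If some coefficient turns out negative, I will instead bound it using $0<b\le\beta_7=\frac{131}{2000}$. One way is to absorb the negative term into the constant and linear terms via $b^k\le\beta_7^{k-1}b$. The other is to verify positivity by evaluating at the endpoint and using monotonicity of the remaining factor.

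The only real obstacle is the bookkeeping in expanding $N_g$ and $N_h$. The structural reduction in Step 1 is what makes it finite and transparent. I would also record the endpoint values $g(\beta_7)$ and $h(\beta_7)$ for use in the $n=7$ analogue of Lemma \ref{lem:gapcond2-8}.
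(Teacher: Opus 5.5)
Your proposal is correct and is the paper's proof: you use the same closed forms $g=C^2/(2(1+5b)^2B)$ and $h=2B^2/((1+2b)C)$, then differentiate and check that the numerators have positive coefficients. Expanding your $N_g$ and $N_h$ gives $N_g=18750b^5+24375b^4+12025b^3+2770b^2+286b+10$ and $N_h=375b^4+515b^3+285b^2+76b+8$ (the $b^5$ term of $N_h$ cancels); these are the paper's numerators with the factors $C$ and $2B$ removed, so the fallback in Step 3 is never needed.
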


\begin{proof}
    With
    \begin{equation*}
        X:=1+10a=\frac{C}{2(1+2b)},\qquad Y:=1+12a=\frac{(1+5b)B}{1+2b},
    \end{equation*}
    we have
    \begin{equation*}
        g=\frac{X^2}{Y}\,\frac{2+4b}{1+5b}=\frac{C^2}{2(1+5b)^2B},
        \qquad
        h=\frac{Y^2}{X(1+5b)^2}=\frac{2B^2}{(1+2b)C}.
    \end{equation*}
    Direct differentiation and factorization give
    \begin{equation*}
        g'(b)=\frac{C\bigl(18750b^5+24375b^4+12025b^3+2770b^2+286b+10\bigr)}
        {2(1+5b)^3B^2}>0,
    \end{equation*}
    \begin{equation*}
        h'(b)=\frac{2B\bigl(375b^4+515b^3+285b^2+76b+8\bigr)}
        {(1+2b)^2C^2}>0,
    \end{equation*}
    and all factors are positive for $b>0$.
\end{proof}

\begin{lemma}[Lemma 3.5 of \cite{Chen2024} at $n=7$]\label{lem:gapcond-7}
    Let $n=7$ and $0<b\le\beta_7$. Then
    \begin{equation*}
        \rho'(b)>\frac49,\qquad 0<\rho(b)<b.
    \end{equation*}
\end{lemma}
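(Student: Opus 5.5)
Since $\rho$ is given in \eqref{eq:data7} as an explicit rational function, I will argue exactly as in the proof of Lemma \ref{lem:gapcond-8}, working with the closed form
\begin{equation*}
    \rho(b)=\frac{b\,(375b^2+255b+43)}{49B},\qquad B=15b^2+9b+1.
\end{equation*}

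\textbf{Step 1: derivatives.} Differentiate by the quotient rule to obtain $\rho'(b)=N(b)/(49B^2)$ with
\begin{equation*}
    N(b)=(1125b^2+510b+43)B-(30b+9)\,b\,(375b^2+255b+43).
\end{equation*}
Then compute $\rho''(b)$ in factored form. By analogy with $n=8$, I expect $\rho''(b)=-c\,(\text{quadratic with positive coefficients})/B^3$ for some constant $c>0$, so that $\rho''<0$ for $b>0$.

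\textbf{Step 2: the bound $\rho'>\frac49$.} If $\rho''<0$, then $\rho'$ is decreasing and it suffices to check the endpoint value $\rho'(\beta_7)>\frac49$ with $\beta_7=\frac{131}{2000}$. Near $b=0$ we have $\rho'(0)=\frac{43}{49}\approx0.88$. At $\beta_7\approx0.0655$, a quick estimate gives $B\approx1.654$ and $N\approx 3.24$, hence $\rho'(\beta_7)\approx 0.0242\cdot 3.24/1.654^2\cdot 49/49$, i.e.\ roughly $0.60$, comfortably above $\frac49$. I will record the exact rational value and check the inequality by clearing denominators. If $\rho''$ fails to have a sign, I will instead verify $9N(b)-4\cdot49B(b)^2>0$ directly on $[0,\beta_7]$ as a polynomial inequality. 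Its constant term is $9\cdot43-196>0$, and I expect it to follow by bounding the negative coefficients with $b\le\beta_7$.

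\textbf{Step 3: the bounds $0<\rho<b$.} Positivity of $\rho$ is immediate, since every factor in the closed form is positive for $b>0$. For the upper bound,
\begin{equation*}
    b-\rho=\frac{b\bigl(49B-(375b^2+255b+43)\bigr)}{49B}=\frac{b\,(360b^2+186b+6)}{49B}>0.
\end{equation*}

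\textbf{Main obstacle.} The only real work is the factorization of $\rho''$, or failing that the endpoint polynomial check. Both are routine finite computations, so I expect no genuine difficulty.
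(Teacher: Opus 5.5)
Your proposal is correct, and the fallback you describe in Step 2 is exactly the paper's proof. Expanding gives $N(b)=5625b^4+6750b^3+2775b^2+510b+43$ and
\[
9N(b)-196B(b)^2=6525b^4+7830b^3+3219b^2+1062b+191,
\]
so $\rho'(b)-\frac49=\frac{9N-196B^2}{441B^2}$ has a numerator with only positive coefficients. There are no negative coefficients to absorb, and the restriction $b\le\beta_7$ is never used.

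Your primary route (concavity plus an endpoint check, copied from the $n=8$ lemma) also works. One finds $N'B-2NB'=-(2700b^2+1620b+264)$, i.e.\ $\rho''(b)=-\frac{12(225b^2+135b+22)}{49B^3}<0$, as you predicted. The two routes differ only in economy. Concavity costs a second derivative and an exact rational evaluation at $\beta_7$. The direct numerator gives $\rho'>\frac49$ for all $b>0$ in one identity, which is why the paper uses it at $n=7$.

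One correction: your heuristic endpoint numbers are off. In fact $N(\beta_7)\approx90.3$ rather than $3.24$, and $\rho'(\beta_7)\approx0.674$ rather than $0.60$. This is harmless, since you commit to an exact check.

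Step 3 coincides with the paper: $b-\rho=\frac{6b(60b^2+31b+1)}{49B}$.
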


\begin{proof}
    Differentiating the closed form of $\rho$ in \eqref{eq:data7} gives
    \begin{equation*}
        \rho'(b)=\frac{5625b^4+6750b^3+2775b^2+510b+43}{49B^2},
    \end{equation*}
    and more precisely
    \begin{equation*}
        \rho'(b)-\frac49=\frac{6525b^4+7830b^3+3219b^2+1062b+191}{441B^2}>0.
    \end{equation*}
    Moreover
    \begin{equation*}
        \rho=\frac{b\bigl(375b^2+255b+43\bigr)}{49B}>0,
        \qquad
        b-\rho=\frac{6b\bigl(60b^2+31b+1\bigr)}{49B}>0.
    \end{equation*}
\end{proof}

\begin{lemma}[Lemma 3.6 of \cite{Chen2024} at $n=7$]\label{lem:gapcond2-7}
    Let $n=7$ and $0<b\le\beta_7$. Then, with $\Lambda:=\frac{A}{2}\sqrt{35}$,
    \begin{equation}\label{eq:three-7}
        (1+b\sqrt5)^2<1+\Lambda,
    \end{equation}
    \begin{equation}\label{eq:three-7b}
        \frac{(1+\Lambda)^2}{P+7Q}<\frac{\omega}{4},
    \end{equation}
    and
    \begin{equation}\label{eq:three-7c}
        \frac{1+\Lambda}{P}<\frac{\omega}{4}.
    \end{equation}
\end{lemma}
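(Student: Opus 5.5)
Mirroring the proof of Lemma \ref{lem:gapcond2-8}, I will first rewrite $\omega/4$ with the dimension-dependent rational factor pulled out, and then bound the left-hand sides by the monotone functions $g$ and $h$ of Lemma \ref{lem:monotonicity-7}. At $n=7$ we have $P+7Q=2(1+12a)=2Y$ and $P=2(1+5b)^2/Y$. Factoring $\frac{1+5b}{2+4b}$ out of $\omega$, I would define
\begin{equation*}
    R_7(b):=\frac{1-\varepsilon_7}{2}\sqrt{\frac{27b(2+5b)}{392\,\rho(b)^3}},\qquad\text{so that}\qquad \frac{\omega}{4}=R_7(b)\,\frac{1+5b}{2(2+4b)}.
\end{equation*}
With this, \eqref{eq:three-7b} and \eqref{eq:three-7c} become
\begin{equation*}
    \frac{(1+\Lambda)^2(2+4b)}{Y(1+5b)}<R_7(b),\qquad \frac{(1+\Lambda)\,Y\,(2+4b)}{(1+5b)^3}<R_7(b).
\end{equation*}

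The inequality \eqref{eq:three-7} is the easy one. I would show $A-cb>0$ for a suitable constant $c$ by an explicit polynomial with positive coefficients. This gives $\Lambda>\frac{\sqrt{35}}{2}cb$, and since $\frac{\sqrt{35}}2\cdot\frac{7}{63}\cdot 42b$ is already about $13b$, this comfortably exceeds $2\sqrt5\,b+5b^2$ on $(0,\beta_7]$.

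For the other two inequalities I next need a linear comparison $1+\Lambda<\kappa X=\kappa(1+10a)$ for a constant $\kappa$. I would aim for $\kappa=\frac54$, as at $n=8$, and verify it through an identity of the form $\kappa X-1-\Lambda\ge0$. Since $\sqrt{35}$ is irrational, I would replace it by a rational upper bound such as $\frac{71}{12}$, so that the check reduces to a polynomial in $b$ with positive coefficients, or at worst a polynomial that is positive on $[0,\beta_7]$. Given this comparison, the left-hand sides are bounded by $\kappa^2 g(b)$ and $\kappa\,g(b)h(b)$, because $X^2(2+4b)/(Y(1+5b))=g$ and $XY(2+4b)/(1+5b)^3=gh$. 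Both bounds are increasing by Lemma \ref{lem:monotonicity-7}.

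On the other side, I would differentiate $b(2+5b)/\rho^3$ using the closed form of $\rho$ in \eqref{eq:data7}. The goal is to factor the numerator as minus a square times a polynomial with positive coefficients, which shows that $R_7$ is decreasing. It then suffices to compare the two sides at the single point $b=\beta_7=\frac{131}{2000}$, squaring to get exact rational margins.

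The main obstacle is that $\beta_7$ was presumably chosen close to the threshold, and the abstract notes that $n=7$ needs a "sharper endpoint comparison". A crude $\kappa=\frac54$ may lose too much. If the endpoint margin turns out negative, I would first try the sharpest admissible constant, namely $\kappa=(1+\Lambda(\beta_7))/X(\beta_7)$ with the exact value of $\Lambda$. The difficulty is that $(1+\Lambda)/X$ need not be monotone, which would break this reduction. In that case I would instead split $(0,\beta_7]$ into a few subintervals $[b_i,b_{i+1}]$. On each one, I would bound the increasing left-hand side, now kept exact without the $\kappa$ comparison, by its value at $b_{i+1}$, using that $\Lambda$, $g$ and $h$ are increasing. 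I would bound $R_7$ below by $R_7(b_{i+1})$. Each subinterval then gives a finite rational check, and the one near $\beta_7$ carries the real content.
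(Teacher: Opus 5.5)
Several parts of your plan work. Your reduction of \eqref{eq:three-7b} and \eqref{eq:three-7c} to comparisons with $R_7$ is correct. So is your treatment of \eqref{eq:three-7}: indeed $A-\frac{14}{3}b=\frac{1125b^3+512b^2+7}{63(1+2b)}>0$. Your direct proof that $R_7$ decreases also goes through, since the numerator of the derivative is $-bB^2p^2$ times a polynomial with positive coefficients, where $p=375b^2+255b+43$. The paper gets this faster from $\rho<b$ and $\rho'>\frac49$.

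The gap is in bounding the left-hand sides. At $n=7$ the comparison $1+\Lambda<\frac54X$ is not merely lossy; it is false on the whole interval. As $b\to0$ one has $1+\Lambda\to1+\frac{\sqrt{35}}{18}\approx1.329>\frac54$, and at $\beta_7$ one has $1+\Lambda\approx2.299>\frac54X\approx2.230$. No constant rescues the route through $g$. The ratio $(1+\Lambda)/X$ is about $1.329$ near $b=0$ and about $1.289$ at $\beta_7$. So your ``sharpest admissible constant'' $(1+\Lambda(\beta_7))/X(\beta_7)$ is not admissible at all, and every admissible $\kappa$ is at least $1+\frac{\sqrt{35}}{18}$. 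With such a $\kappa$, $\kappa^2g(\beta_7)\approx4.93>R_7(\beta_7)\approx4.64$. The difficulty sits in \eqref{eq:three-7b}. The paper's exact endpoint values are $S_2(\beta_7)^2<21.529$ against $R_7(\beta_7)^2>21.567$, a relative margin below $0.2\%$, whereas $S_3(\beta_7)^2<18.623$ leaves room to spare.

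Your subdivision fallback does not close the gap. You bound the exact left-hand side on $[b_i,b_{i+1}]$ by its value at $b_{i+1}$ ``using that $\Lambda$, $g$ and $h$ are increasing''. But the exact left-hand side of \eqref{eq:three-7b} is $\bigl(\frac{1+\Lambda}{X}\bigr)^2g=(1+\Lambda)^2\cdot\frac{2+4b}{Y(1+5b)}$. In either form it is a product containing a decreasing factor, so monotonicity of $\Lambda,g,h$ does not give it. Moreover, if monotonicity of the exact left-hand sides were available, the subdivision would be superfluous. With the left-hand side increasing and $R_7$ decreasing, the single check at $b=\beta_7$ implies all the others. Bounding the monotone factors separately at opposite ends of each subinterval is valid. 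But with a margin this thin it needs a fine mesh near $\beta_7$, which you have not set up.

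The missing step, which is the actual content of the paper's proof, runs as follows.
\begin{itemize}
\item Fix a rational $c>\frac{\sqrt{35}}2$. The paper uses $\frac{59161}{20000}$; your bound $\sqrt{35}<\frac{71}{12}$ would also leave enough room.
\item Put $U=1+cA$, so that $1+\Lambda<U$.
\item Show that $S_2=U^2\frac{2+4b}{Y(1+5b)}$ and $S_3=U\frac{Y(2+4b)}{(1+5b)^3}$ are increasing on $(0,\beta_7]$ by differentiating these rational functions directly. For the paper's $c$, the numerators of $S_2'$ and $S_3'$ have positive coefficients.
\item Make one exact rational comparison of $S_2$ and $S_3$ with $R_7$ at $\beta_7$; this proves both inequalities.
\end{itemize}
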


\begin{proof}
    For \eqref{eq:three-7} we use the rational bounds
    \begin{equation*}
        \frac{1479}{250}<\sqrt{35},\qquad \sqrt5<\frac{22361}{10000},
    \end{equation*}
    verified by squaring:
    $\bigl(\frac{1479}{250}\bigr)^2-35=-\frac{59}{62500}<0$ and
    $\bigl(\frac{22361}{10000}\bigr)^2-5=\frac{14321}{10^8}>0$. The first gives
    $\Lambda=\frac{\sqrt{35}}2A>\frac{1479}{500}A$, and a direct simplification gives
    \begin{equation*}
        1+\frac{1479}{500}A-\left(1+\frac{22361}{10000}b\right)^2
        =\frac{690200000+19596780000b+79176459259b^2+89924398518b^3}{2100000000(1+2b)}>0,
    \end{equation*}
    so that
    \begin{equation*}
        (1+b\sqrt5)^2<\left(1+\frac{22361}{10000}b\right)^2<1+\frac{1479}{500}A<1+\Lambda,
    \end{equation*}
    which is \eqref{eq:three-7}.

    For \eqref{eq:three-7b} and \eqref{eq:three-7c}, define
    \begin{equation*}
        R(b):=\frac{1-\varepsilon_7}{2}\sqrt{\frac{27b(2+5b)}{392\,\rho(b)^3}},
    \end{equation*}
    so that factoring $\frac{1+5b}{2+4b}$ out of $\omega$ gives
    \begin{equation}\label{eq:omegaR7}
        \frac{\omega}{4}=R(b)\,\frac{1+5b}{2(2+4b)}.
    \end{equation}
    The sign of $(R^2)'$ is the sign of $(2+10b)\rho-3b(2+5b)\rho'$, and by Lemma
    \ref{lem:gapcond-7} ($\rho<b$ and $\rho'>\frac49$) this is strictly smaller than
    \begin{equation*}
        b(2+10b)-\frac43b(2+5b)=-\frac{2b}{3}(1-5b)<0,
    \end{equation*}
    since $\beta_7<\frac15$. Hence $R$ is strictly decreasing.

    Next, for \eqref{eq:three-7b} and \eqref{eq:three-7c} we bound $\Lambda$ from above: since
    $\bigl(\frac{59161}{10000}\bigr)^2-35=\frac{23921}{10^8}>0$,
    \begin{equation}\label{eq:Lambda7bound}
        \Lambda=\frac{\sqrt{35}}2A<\frac{59161}{20000}A=:cA,
    \end{equation}
    so with $U:=1+cA$ one has $1+\Lambda<U$. Put $Y:=1+12a$ and introduce the sharper comparison functions
    \begin{equation*}
        S_2(b):=U^2\frac{2+4b}{Y(1+5b)},\qquad
        S_3(b):=U\,\frac{Y(2+4b)}{(1+5b)^3},
    \end{equation*}
    in place of the coarser comparison through $g$ and $h$ used in \cite{Chen2024}, which is
    insufficient at $n=7$ (see Remark \ref{rem:n7reduction} below). With
    $D:=66556125b^3+65077100b^2+19913334b+1674127$, exact simplification gives
    \begin{equation*}
        S_2=\frac{D^2}{793800000000(1+5b)^2B},
    \end{equation*}
    \begin{equation*}
        S_2'=\frac{D\bigl(9983418750b^5+12978444375b^4+6219930525b^3+1408851320b^2
        +163297451b+8018255\bigr)}{793800000000(1+5b)^3B^2}>0,
    \end{equation*}
    and
    \begin{equation*}
        S_3=\frac{BD}{630000(1+2b)(1+5b)^2},
    \end{equation*}
    \begin{equation*}
    \begin{split}
        S_3'&=\frac{19966837500b^6+38713479375b^5+30194295375b^4+12165106705b^3}{630000(1+2b)^2(1+5b)^3}\\
        &\quad+\frac{2692058587b^2+313691827b+14890953}{630000(1+2b)^2(1+5b)^3}>0,
    \end{split}
    \end{equation*}
    so $S_2$ and $S_3$ are strictly increasing. At $b=\beta_7=\frac{131}{2000}$, direct
    substitution into the displayed rational formulas gives the exact rational bounds
    \begin{equation*}
        R(\beta_7)^2>\frac{21567}{1000},\qquad
        S_2(\beta_7)^2<\frac{21529}{1000}<\frac{21567}{1000},\qquad
        S_3(\beta_7)^2<\frac{18623}{1000}<\frac{21567}{1000};
    \end{equation*}
    for instance,
    \begin{equation*}
        R(\beta_7)^2
        =\frac{13975907253106977704824239165412419}{648014963448238060589335000000000}.
    \end{equation*}
    All quantities are positive, so $S_2(\beta_7)<R(\beta_7)$ and $S_3(\beta_7)<R(\beta_7)$, and the
    opposite monotonicities give
    \begin{equation*}
        S_2(b)<R(b),\qquad S_3(b)<R(b)\qquad\text{for all }0<b\le\beta_7.
    \end{equation*}
    Finally, a direct simplification gives
    \begin{equation*}
        P+7Q=2Y,\qquad P=\frac{2(1+5b)^2}{Y},
    \end{equation*}
    and together with \eqref{eq:omegaR7} these identities show that
    \begin{equation*}
        S_2<R\iff\frac{U^2}{P+7Q}<\frac{\omega}{4},
        \qquad
        S_3<R\iff\frac{U}{P}<\frac{\omega}{4}.
    \end{equation*}
    Since $1+\Lambda<U$ by \eqref{eq:Lambda7bound}, inequalities \eqref{eq:three-7b} and \eqref{eq:three-7c} follow.
\end{proof}

\begin{remark}\label{rem:n7reduction}
    The proofs of Lemmas \ref{lem:monotonicity-8}--\ref{lem:gapcond-8} and
    \ref{lem:monotonicity-7}--\ref{lem:gapcond-7} are direct in both dimensions. For the three
    inequalities, Chen's original coarse $g,h$-based endpoint reduction works unchanged at $n=8$
    (the proof of Lemma \ref{lem:gapcond2-8}). At $n=7$ that coarse reduction is insufficient, but
    the lemma itself still holds: retaining the exact quantities $S_2$ and $S_3$, proving that they
    increase, and comparing them with the decreasing function $R$ gives a short rigorous
    replacement (the proof of Lemma \ref{lem:gapcond2-7}).
\end{remark}

The remaining coefficient used in Proposition 3.7 of \cite{Chen2024} is positive on both parameter ranges:
\begin{equation}\label{eq:remaining-coeff}
\begin{split}
    &nb^2(1-2b)-2(a-b)(1-2b+nb^2)\\
    &\qquad=\frac{b^2}{2+(n-3)b}\,
    \bigl(2+(n-8)b-2(n+2)(n-2)b^2-n(n-2)^2b^3\bigr)>0.
\end{split}
\end{equation}
Indeed, the polynomial in parentheses has derivative $-48b(18b+5)<0$ at $n=8$ and
$-525b^2-180b-1<0$ at $n=7$. Its endpoint values are, respectively,
$\frac{128}{81}>0$ and $\frac{479744163}{320000000}>0$.

The auxiliary comparison used in Proposition 3.12 of \cite{Chen2024} also holds on both ranges:
\begin{equation}\label{eq:chen-C}
    \frac{(1+4b)^2(n-5)^2(n-2)}{(n-1)^2(n-4)^2\bigl(1+\frac A2\sqrt{n(n-2)}\bigr)}
    <\frac{1+4b}{n-1}<\frac12.
\end{equation}
For the first inequality, use $A>0$ and
\begin{equation*}
    (1+4b)(n-5)^2(n-2)\le
    \begin{cases}
        66<112=(n-1)(n-4)^2,&n=8,\\
        \frac{631}{25}<54=(n-1)(n-4)^2,&n=7.
    \end{cases}
\end{equation*}
For the last inequality in \eqref{eq:chen-C}, the endpoint bounds are $\frac{11}{63}<\frac12$
at $n=8$ and $\frac{631}{3000}<\frac12$ at $n=7$.
With Lemmas \ref{lem:monotonicity-8}, \ref{lem:gapcond-8} and \ref{lem:gapcond2-8} at $n=8$,
Lemmas \ref{lem:monotonicity-7}, \ref{lem:gapcond-7} and \ref{lem:gapcond2-7} at $n=7$,
and \eqref{eq:remaining-coeff} and \eqref{eq:chen-C} in place, the dimension-free remainder of
the proof of Theorem 3.3 of \cite{Chen2024} applies in both dimensions.
This proves Theorem \ref{thm:trans1}.

\section{Proof of Theorem \ref{thm:trans2}}
\label{sec:proof2}

We prove the transversal invariance of the second family $\tilde C(b)$, following the strategy of
Section 4 of \cite{Brendle2019} as adapted in Section 4 of \cite{Chen2024}. The boundary
calculation reduces the proof to three claims (Subsection \ref{subsec:proof2-common});
the claims are then verified at $n=8$ (Subsection \ref{subsec:proof2-8}) and at $n=7$
(Subsection \ref{subsec:proof2-7}).

\subsection{The boundary calculation}
\label{subsec:proof2-common}

We first recall the pullback formulation in which the invariance of $\tilde C(b)$ is checked. Write
$a_{max}=a(b_{max,2})$ and $\gamma_{max}=\gamma(b_{max,2})$. For $0\le b\le a$,
\begin{equation}\label{eq:lab-pf2}
    l_{a,b}(R)=R+b\,\Ric(R)\wedge\mathrm{id}+\frac{2(a-b)}{n}\Scal(R)\,I,
\end{equation}
with the trace identities
\begin{equation}\label{eq:labtrace-pf2}
    \Rico(l_{a,b}(R))=(1+(n-2)b)\Rico(R),\qquad
    \Scal(l_{a,b}(R))=(1+2(n-1)a)\Scal(R).
\end{equation}
With $D_{a,b}(S):=l_{a,b}^{-1}\bigl(Q(l_{a,b}(S))\bigr)-Q(S)$, a curvature operator
$R=l_{a,b}(S)$ solves $\dt R=Q(R)$ if and only if $S$ solves the pullback equation
\begin{equation}\label{eq:pullback}
    \dt S=Q(S)+D_{a,b}(S),
\end{equation}
and the B\"ohm--Wilking computation gives
\begin{equation}\label{eq:Dab-pf2}
    \begin{split}
        D_{a,b}(S)={}&\bigl(2b+(n-2)b^2-2a\bigr)\Rico(S)\wedge\Rico(S)
        +2a\,\Ric(S)\wedge\Ric(S)+2b^2\Rico(S)^2\wedge\mathrm{id}\\
        &+\frac{nb^2(1-2b)-2(a-b)(1-2b+nb^2)}{n\bigl(1+2(n-1)a\bigr)}\,
        |\Rico(S)|^2\,\mathrm{id}\wedge\mathrm{id}.
    \end{split}
\end{equation}
Moreover, if $S$ evolves by \eqref{eq:pullback}, then
\begin{equation}\label{eq:RicEvol-pf2}
    \begin{split}
        \dt\Ric(S)={}&2\bigl(S*\Ric(S)\bigr)-4b\Ric(S)^2
        +\frac4n\bigl(2b+(n-2)a\bigr)\Scal(S)\Ric(S)\\
        &+2\,\frac{n^2b^2-2(n-1)(a-b)(1-2b)}{n\bigl(1+2(n-1)a\bigr)}\,
        |\Rico(S)|^2\mathrm{id}
        +\frac{4}{n^2}(a-b)\Scal(S)^2\mathrm{id}.
    \end{split}
\end{equation}

For an orthonormal four-frame, abbreviate
\begin{equation}\label{eq:Phi}
    \Phi_\lambda(A):=A_{1313}+\lambda^2A_{1414}+A_{2323}+\lambda^2A_{2424}-2\lambda A_{1234},
\end{equation}
so that $\Phi_1\ge0$ is the weakly PIC condition, and put
\begin{equation}\label{eq:tildea}
    \tilde a(b)=b+\frac{n-2}{2}b^2,\qquad 2b+(n-2)b^2-2\tilde a=0.
\end{equation}
With this notation, condition (ii) of Definition \ref{def:pinchingCones2} reads
\begin{equation}\label{eq:Zlambda}
    Z_\lambda(S):=\Phi_\lambda(S)+\sqrt{2\tilde a}\,(1-\lambda^2)
    \bigl(\Ric(S)_{11}+\Ric(S)_{22}\bigr)\ge0
\end{equation}
for every orthonormal four-frame and every $\lambda\in[0,1]$. The term $\lambda^2S_{2424}$ in
$\Phi_\lambda$ is essential for the boundary computation below. Note that $Z_1=\Phi_1$.

We use two computational lemmas, both valid in every dimension $n\ge5$; they are stated and
proved in \cite{Brendle2019} (cf.\ \cite{Chen2024}), and we do not reproduce the proofs.

\begin{lemma}[Boundary reaction; cf.\ Proposition A.8 of \cite{Brendle2019}, Lemma 4.8 of \cite{Chen2024}]\label{lem:boundary-reaction}
    Let $n\ge5$, let $S\in\CB(\mathbb R^n)$, and let $H$ be a symmetric two-tensor. Suppose that
    \begin{equation*}
        \Phi_\lambda(S)+(1-\lambda^2)\bigl(H_{11}+H_{22}\bigr)\ge0
    \end{equation*}
    for all orthonormal four-frames and all $\lambda\in[0,1]$, with equality at a given frame and
    some $\lambda\in[0,1)$. Then
    \begin{equation}\label{eq:boundary-reaction}
        \Phi_\lambda\bigl(Q(S)\bigr)+\Phi_\lambda(H\wedge H)
        +2(1-\lambda^2)\bigl((S*H)_{11}+(S*H)_{22}\bigr)
        \ge(1+\lambda^2)\bigl(H_{11}+H_{22}\bigr)^2\ge0.
    \end{equation}
\end{lemma}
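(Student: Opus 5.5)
The plan is to follow the second-variation argument of Brendle--Schoen as organized in Appendix A of \cite{Brendle2019}, working in complex notation. Fix the frame and $\lambda\in[0,1)$ at which equality holds. Put $\zeta=e_1+ie_2$, $\eta=e_3+i\lambda e_4$ and $\varphi=\zeta\wedge\eta\in\Lambda^2\mathbb C^n$, and extend $S$ complex-bilinearly. A direct expansion should give $\Phi_\lambda(S)=S(\varphi,\bar\varphi)$. The hypothesis then says that the function
\begin{equation*}
F(\zeta,\eta)=S(\zeta\wedge\eta,\overline{\zeta\wedge\eta})+\bigl(|\eta|^2-|\mathrm{Re}\,\eta|^2\cdots\bigr)
\end{equation*}
is nonnegative on the relevant family of complex pairs and vanishes at $(\zeta,\eta)$. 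More precisely, I would encode $(1-\lambda^2)(H_{11}+H_{22})$ as $H(\zeta,\bar\zeta)$ times the deficit $|\mathrm{Re}\,\eta|^2-|\mathrm{Im}\,\eta|^2$, normalised so that the constraint set is invariant under real rotations $\exp(sA)$, $A\in\mathfrak{so}(n)$, and under variation of $\lambda$.

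The first step is to record the first-order conditions. Differentiating in $\lambda$ gives
\begin{equation*}
\lambda(S_{1414}+S_{2424})-S_{1234}=\lambda(H_{11}+H_{22}).
\end{equation*}
Differentiating along $\exp(sA)$ gives vanishing of the mixed components $S(\varphi,\overline{\zeta\wedge w})$ and $S(\varphi,\overline{w\wedge\eta})$, corrected by $H$-terms, for $w\perp e_1,\dots,e_4$.

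The second step is to record the second-order conditions. Vary $\zeta\mapsto\zeta+s\,x$ and $\eta\mapsto\eta+s\,y$ with arbitrary complex $x,y\in\mathbb C^n$, using the complexified rotation trick of Brendle--Schoen: replace $x$ by $ix$ and average. This yields that the Hermitian form
\begin{equation*}
(x,y)\mapsto S(\zeta\wedge y+x\wedge\eta,\overline{\zeta\wedge y+x\wedge\eta})+2\,\mathrm{Re}\,S(\varphi,\overline{x\wedge y})+(\text{$H$-terms in }x,y)
\end{equation*}
is nonnegative. Here the $H$-terms are $H(x,\bar x)$ times the $\eta$-deficit, plus a cross term $H(\zeta,\bar x)\langle\eta,\ldots\rangle$.

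The third step expands the reaction term. Write
\begin{equation*}
\Phi_\lambda(Q(S))=S^2(\varphi,\bar\varphi)+S^\#(\varphi,\bar\varphi),\qquad S^\#(\varphi,\bar\varphi)=\sum_{\alpha,\beta}S(\omega_\alpha,\bar\omega_\beta)\,\langle[\omega_\alpha,\bar\omega_\beta]\ldots\rangle,
\end{equation*}
in an orthonormal basis adapted to the decomposition of $\Lambda^2$ into $\varphi$, the spans $\zeta\wedge\mathbb C^n$ and $\eta\wedge\mathbb C^n$, and the rest. The term $S^2(\varphi,\bar\varphi)$ is a sum of squares and is nonnegative. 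Following \cite{Brendle2019}, $S^\#(\varphi,\bar\varphi)$ is then written as a trace of a product $\mathrm{tr}(M_1M_2)$. Here $M_1,M_2$ are the two $(n-4)$-block matrices (together with the $\lambda$-direction) obtained from the second-variation form by choosing $x$ and $y$ in the orthogonal complement. Each $M_i$ is nonnegative up to the $H$-corrections, and $\mathrm{tr}(M_1M_2)\ge0$ for nonnegative matrices. Carrying the $H$-corrections through this product should produce exactly $-\Phi_\lambda(H\wedge H)-2(1-\lambda^2)\bigl((S*H)_{11}+(S*H)_{22}\bigr)$ plus a perfect square. The first-order $\lambda$-identity converts that square into $(1+\lambda^2)(H_{11}+H_{22})^2$.

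The main obstacle is this last bookkeeping step: identifying the $H$-corrections in the matrices $M_1,M_2$ so that the cross terms reproduce precisely $\Phi_\lambda(H\wedge H)$ and the $S*H$ term. The delicate part is the components involving $e_4$ with weight $\lambda$ and the complementary directions with weight $\sqrt{1-\lambda^2}$, as in Lemma \ref{lem:wpic}(5). I would first try the case where $H$ is diagonal in the adapted frame, then obtain the general case by $O(n-4)$-invariance and polarisation. At the end I would check that the leftover quadratic form in $(H_{11}+H_{22})$ is the one-dimensional square from the $\lambda$-variation.
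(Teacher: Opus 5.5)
The paper does not prove this lemma. It defers to Proposition~A.8 of \cite{Brendle2019} and Lemma~4.8 of \cite{Chen2024}. Your plan has a genuine gap at the step where you set $S^2(\varphi,\bar\varphi)\ge0$ aside and expect the right-hand side $(1+\lambda^2)(H_{11}+H_{22})^2$ to come out of the $S^\#$ bookkeeping as a perfect square. That cannot happen.

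Take $S=cI$ and $H=-c\,\mathrm{id}$ with $c>0$. The hypothesis reads $4c\lambda^2\ge0$, with equality at $\lambda=0$ in every frame. At $\lambda=0$ one finds
\[
S^2(\varphi,\bar\varphi)=4c^2,\qquad S^\#(\varphi,\bar\varphi)=4(n-2)c^2,\qquad \Phi_0(H\wedge H)=4c^2,\qquad 2\bigl((S*H)_{11}+(S*H)_{22}\bigr)=-4(n-1)c^2.
\]
So the $S^\#$-part together with all $H$-terms is exactly $0$, while $(1+\lambda^2)(H_{11}+H_{22})^2=4c^2$. The inequality \eqref{eq:boundary-reaction} is sharp here, and its entire right-hand side sits in the term you discarded.

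The square has to be extracted from $S^2$. Put $h=H_{11}+H_{22}$ and $\psi=\partial_\lambda\varphi=i(e_1+ie_2)\wedge e_4$. Equality gives $S(\varphi,\bar\varphi)=-(1-\lambda^2)h$, and your first-order identity is $\mathrm{Re}\,S(\varphi,\bar\psi)=\lambda h$. Write $S^2(\varphi,\bar\varphi)=2\sum_{p<q}|S(\varphi,e_p\wedge e_q)|^2$ and project onto $\mathrm{span}\{\varphi,\psi\}$, whose Gram matrix is $\bigl(\begin{smallmatrix}2(1+\lambda^2)&2\lambda\\2\lambda&2\end{smallmatrix}\bigr)$. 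This gives exactly $S^2(\varphi,\bar\varphi)\ge(1+\lambda^2)h^2$.

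What remains is
\[
S^\#(\varphi,\bar\varphi)+\Phi_\lambda(H\wedge H)+2(1-\lambda^2)\bigl((S*H)_{11}+(S*H)_{22}\bigr)\ge0,
\]
which is precisely the bookkeeping you leave open. Your proposed reduction to diagonal $H$ does not work either. The equality pins $e_1,\dots,e_4$, so $O(n-4)$ only diagonalises the block of $H$ on $\mathrm{span}\{e_5,\dots,e_n\}$. Since neither the hypothesis nor the conclusion is linear in $H$, there is also nothing to polarise.

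The missing idea that makes this step automatic is an extension to $\mathbb{R}^{n+1}$. Set $\tilde S_{ijkl}=S_{ijkl}$ and $\tilde S_{i(n+1)k(n+1)}=H_{ik}$ for $i,j,k,l\le n$, with all components containing an odd number of indices $n+1$ equal to zero; this is an algebraic curvature tensor.

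1. Every totally isotropic $2$-plane in $\mathbb{C}^{n+1}$ contains a nonzero vector of $\mathbb{C}^n$. Hence the hypothesis for all frames and all $\lambda\in[0,1]$ is equivalent to $\tilde S\in\CPIC(\mathbb{R}^{n+1})$.
2. The equality case says $\tilde S(\tilde\varphi,\bar{\tilde\varphi})=0$ for $\tilde\varphi=(e_1+ie_2)\wedge\bigl(e_3+i(\lambda e_4+\sqrt{1-\lambda^2}\,e_{n+1})\bigr)$.
3. A direct block computation gives $\tilde S^\#_{ijkl}=S^\#_{ijkl}+(H\wedge H)_{ijkl}$ for $i,j,k,l\le n$, and $\tilde S^\#_{i(n+1)k(n+1)}=2(S*H)_{ik}$.
4. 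Therefore the left-hand side of \eqref{eq:boundary-reaction} equals $S^2(\varphi,\bar\varphi)+\tilde S^\#(\tilde\varphi,\bar{\tilde\varphi})$.

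The second term is nonnegative by the standard null-vector estimate for weakly PIC tensors \cite{BrendleSchoen2009,Wilking2013}. The first term is bounded below as above, which proves the lemma. The application in \eqref{eq:Zdot} only uses the weaker conclusion $\ge0$.
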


\begin{lemma}[Ricci quadratic estimate; cf.\ Lemma A.1 of \cite{Brendle2019}, Lemma 4.7 of \cite{Chen2024}]\label{lem:ricci-quadratic}
    Let $0\le\zeta\le1$ and $0<\varrho\le1$, and let $H$ be a symmetric two-tensor whose largest
    eigenvalue is at most $\frac12\mathrm{tr}\,H$ and whose two smallest eigenvalues have sum at
    least $\frac{2(1-\zeta)}{n}\mathrm{tr}\,H$. Then, with
    $H^0=H-\frac1n\mathrm{tr}(H)\,\mathrm{id}$, every orthonormal pair $\{e_1,e_2\}$ satisfies
    \begin{equation}\label{eq:ricci-quadratic}
        \frac{n-2}{n}\,\mathrm{tr}(H)\,\bigl(H_{11}+H_{22}\bigr)
        -\varrho\bigl((H^0)^2_{11}+(H^0)^2_{22}\bigr)
        \ge\frac{2}{n^2}\left((n-2)(1-\zeta)-2\zeta^2\varrho\,
        \frac{n^2-2n+2}{(n-2)^2}\right)\mathrm{tr}(H)^2.
    \end{equation}
\end{lemma}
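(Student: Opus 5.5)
We first normalize and reduce the statement to a problem about eigenvalues. If $\mathrm{tr}\,H<0$, every eigenvalue is at most $\frac12\mathrm{tr}\,H<0$. Their sum would then be at most $\frac n2\mathrm{tr}\,H<\mathrm{tr}\,H$, which is a contradiction; hence $\mathrm{tr}\,H\ge0$. If $\mathrm{tr}\,H=0$, all eigenvalues are $\le0$ and sum to $0$, so $H=0$ and there is nothing to prove. By homogeneity we may therefore assume $\mathrm{tr}\,H=n$. Then $H^0=H-\mathrm{id}$ has eigenvalues $\mu_1\le\dots\le\mu_n$ with the following properties:
\begin{equation*}
\sum_i\mu_i=0,\qquad \mu_n\le\tfrac{n-2}{2},\qquad \mu_1+\mu_2\ge-2\zeta.
\end{equation*}
Writing $x=H^0_{11}+H^0_{22}$, we have $H_{11}+H_{22}=2+x$. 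After dividing by $n^2/n^2$, the claimed inequality becomes
\begin{equation*}
(n-2)(x+2\zeta)-\varrho\bigl((H^0)^2_{11}+(H^0)^2_{22}\bigr)+4\zeta^2\varrho\,\tfrac{n^2-2n+2}{(n-2)^2}\ge0 .
\end{equation*}

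Next we linearize in the frame. Let $\{v_i\}$ be an eigenbasis of $H^0$ and put $w_i=\langle e_1,v_i\rangle^2+\langle e_2,v_i\rangle^2$. Then $0\le w_i\le1$ and $\sum_iw_i=2$. Moreover $x=\sum_i\mu_iw_i$ and $(H^0)^2_{11}+(H^0)^2_{22}=\sum_i\mu_i^2w_i$. The left-hand side above is therefore an affine function of $w$ on the polytope $\{0\le w_i\le1,\ \sum w_i=2\}$. Its minimum is attained at a vertex, that is, at $w=\mathbf 1_{\{i,j\}}$ for some $i\ne j$. It thus suffices to prove, for every pair $i\ne j$,
\begin{equation*}
\phi(\mu_i)+\phi(\mu_j)+2(n-2)\zeta+4\zeta^2\varrho\,\tfrac{n^2-2n+2}{(n-2)^2}\ge0,\qquad \phi(\mu):=(n-2)\mu-\varrho\mu^2 .
\end{equation*}

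Now we use the concavity of $\phi$ and the constraints. Since $\varrho\le1$ and $\mu\le\frac{n-2}2$, we get $\phi(\mu)\ge\mu\bigl((n-2)-\varrho\mu\bigr)\ge0$ for every $0\le\mu\le\frac{n-2}2$. Hence the case $\mu_i,\mu_j\ge0$ is immediate.

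In the remaining case some $\mu_i<0$. We have $\mu_i+\mu_j\ge-2\zeta$ for every pair, so the linear part $(n-2)(\mu_i+\mu_j+2\zeta)$ is nonnegative. What remains is to bound the quadratic loss $\varrho(\mu_i^2+\mu_j^2)$ by the linear gain plus the slack term. For this we need a lower bound on a single negative eigenvalue in terms of $\zeta$. From $\mu_1+\mu_k\ge-2\zeta$ for all $k\ge2$ and $\sum\mu=0$, one gets $\mu_1\ge-2\zeta-\mu_2$ and $\mu_2\le-\mu_1/(n-1)$, and so on. Optimizing these constraints gives an extremal configuration. We expect it to be the one with $\mu_1$ most negative, the other eigenvalues equal, and the pair $\{1,2\}$ tight. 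This configuration should produce exactly the constant $\frac{n^2-2n+2}{(n-2)^2}$: evaluating $\phi(\mu_1)+\phi(\mu_2)$ there, with $\mu_1+\mu_2=-2\zeta$ and the remaining mass spread out, and minimizing the concave quadratic in one variable over its admissible interval, should give the bound.

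The main obstacle is this last step. We must show that among all admissible spectra, the minimum of $\phi(\mu_i)+\phi(\mu_j)$ is attained at the extremal configuration described above, and we must compute the constant exactly. Our first attempt is a case split on the sign of $\mu_j$. When both are negative, they are bounded below by $-2\zeta$ and we use $\mu_i^2+\mu_j^2\le(\mu_i+\mu_j)^2$. When exactly one is negative, we use the constraint against the remaining eigenvalues to bound $|\mu_i|$. In each case we then apply the elementary estimate $\varrho\mu^2\le\varrho\,\alpha^2$ on the admissible interval $[-\alpha,0]$, and check the inequality as a quadratic in $\zeta$.
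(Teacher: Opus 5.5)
Your reduction is sound: the sign argument for $\mathrm{tr}\,H$, the normalization $\mathrm{tr}\,H=n$, the passage to vertices $w=\mathbf 1_{\{i,j\}}$ of the hypersimplex, and the case $\mu_i,\mu_j\ge0$ are all correct. Your guessed extremal spectrum ($\mu_1=-\alpha$ with $\alpha=\frac{2(n-1)}{n-2}\zeta$, all other eigenvalues equal to $\frac{2\zeta}{n-2}$) does give exactly the constant $K=\frac{n^2-2n+2}{(n-2)^2}$. The paper itself gives no proof of this lemma; it defers to Lemma A.1 of \cite{Brendle2019}.

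There is a genuine gap, however. The case with a negative eigenvalue is the whole content of the lemma, and you leave it as an expectation. Moreover, the ``first attempt'' you sketch for the mixed-sign case cannot succeed. At $\mu_i=-\alpha$, $\mu_j=\frac{2\zeta}{n-2}$ the inequality is an \emph{equality} for every $\varrho$. There the linear slack $(n-2)(\mu_i+\mu_j+2\zeta)$ vanishes, and the positive partner contributes $\phi(\mu_j)=2\zeta-\frac{4\varrho\zeta^2}{(n-2)^2}$, which is needed in full. Suppose you bound $\varrho\mu_i^2\le\varrho\alpha^2$ and handle $\mu_j$ by $\phi(\mu_j)\ge0$. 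Then you are left with $-2\zeta+\frac{4\varrho\zeta^2}{(n-2)^2}<0$ for $\zeta>0$. Suppose instead you spend the linear slack as $\ge0$. Then you are left with $\varrho\bigl(\frac{4\zeta^2}{(n-2)^2}-\mu_j^2\bigr)$, which has the wrong sign as soon as $\mu_j>\frac{2\zeta}{n-2}$, and such partners of a negative eigenvalue do occur. Any estimate that treats $\mu_i$ and $\mu_j$ separately and ends in a quadratic in $\zeta$ fails in this way; you have to keep the coupling $\mu_i+\mu_j\ge-2\zeta$.

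The missing step is short.
\begin{itemize}
\item Since $\varrho\le1$ and every eigenvalue is at most $\frac{n-2}2$, we have $\phi'(\mu)=(n-2)-2\varrho\mu\ge0$ on the whole spectrum, so $\phi$ is nondecreasing there.
\item From $(n-1)\mu_2\le-\mu_1$ and $\mu_1+\mu_2\ge-2\zeta$ you get $\mu_1\ge-\alpha$. Hence every pair $(m,p)=(\mu_i,\mu_j)$ lies in
\[
\Omega=\Bigl\{(m,p):\ -\alpha\le m,p\le\tfrac{n-2}2,\ m+p\ge-2\zeta\Bigr\}.
\]
\item By monotonicity, lower $m$ to $\max(-\alpha,-2\zeta-p)$. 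Then, if $m=-\alpha$ and $p>\frac{2\zeta}{n-2}$, lower $p$ to $\frac{2\zeta}{n-2}$. Neither move increases $\phi(m)+\phi(p)$, and you land on the segment $m+p=-2\zeta$, $m,p\in[-\alpha,\frac{2\zeta}{n-2}]$.
\item On that segment, $\phi(m)+\phi(p)=-2(n-2)\zeta-\varrho(m^2+p^2)$. Here $m^2+p^2=2\zeta^2+\frac12(m-p)^2$ is maximal at the endpoint $\{-\alpha,\frac{2\zeta}{n-2}\}$, where it equals $\alpha^2+\frac{4\zeta^2}{(n-2)^2}=4\zeta^2K$.
\end{itemize}
This gives exactly the required bound $\phi(\mu_i)+\phi(\mu_j)\ge-2(n-2)\zeta-4\varrho\zeta^2K$, and it makes your case split by signs unnecessary.
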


Condition (i) of Definition \ref{def:pinchingCones2} is preserved directly under
\eqref{eq:pullback}: with $\tilde a=\tilde a(b)$,
\begin{equation}\label{eq:condi}
    \dt\,l_{\tilde a,b}(S)=l_{\tilde a,b}\bigl(Q(S)+D_{\tilde a,b}(S)\bigr)
    =Q\bigl(l_{\tilde a,b}(S)\bigr),
\end{equation}
so the first-family invariance at $b_{max,2}$ (Theorem \ref{thm:trans1}) preserves, and at its
boundary strictly improves, condition (i). In particular $\Scal(S)>0$ for $S\ne0$, since the
first-family cone has positive scalar curvature and $l_{\tilde a,b}$ rescales scalar curvature by
the positive factor $1+2(n-1)\tilde a$.

It remains to prove the strict inward reaction of condition (ii). Fix a boundary datum
$Z_\lambda(S)=0$. Substituting $a=\tilde a$ into \eqref{eq:Dab-pf2} and using \eqref{eq:tildea},
the $\Rico\wedge\Rico$ term disappears:
\begin{equation}\label{eq:Dab-tilde}
    D_{\tilde a,b}(S)=2\tilde a\,\Ric\wedge\Ric+2b^2\Rico^2\wedge\mathrm{id}
    +c_4\,|\Rico|^2\,\mathrm{id}\wedge\mathrm{id},
\end{equation}
where
\begin{equation}\label{eq:c4}
    c_4=\frac{nb^2(1-2b)-2(\tilde a-b)(1-2b+nb^2)}{n\bigl(1+2(n-1)\tilde a\bigr)}.
\end{equation}
\begin{claim}[Discards]\label{claim:discards}
    For $n=7,8$ and $0<b\le\tilde b_{max,2}$: $c_4>0$ in \eqref{eq:c4},
    and the coefficient of $|\Rico|^2\mathrm{id}$ in \eqref{eq:RicEvol-pf2} is positive.
\end{claim}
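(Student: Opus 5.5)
The plan is to substitute $a=\tilde a(b)=b+\frac{n-2}{2}b^2$ directly into the two coefficients and factor out $b^2$. Only the signs of the resulting short polynomials, and of the positive denominators, then need checking on $(0,\tilde b_{max,2}]$. No estimate from the earlier sections is needed beyond the definitions \eqref{eq:c4} and \eqref{eq:RicEvol-pf2}.

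\emph{The coefficient $c_4$.} Since $\tilde a-b=\frac{n-2}{2}b^2$, the numerator of \eqref{eq:c4} is
\begin{equation*}
    nb^2(1-2b)-(n-2)b^2(1-2b+nb^2)
    =b^2\bigl(2-4b-n(n-2)b^2\bigr).
\end{equation*}
The denominator $n(1+2(n-1)\tilde a)$ is positive because $\tilde a>0$. The bracket $2-4b-n(n-2)b^2$ is strictly decreasing in $b>0$, so it suffices to evaluate it at the right endpoint:
\begin{itemize}
    \item for $n=8$ and $b=\frac1{40}$ it equals $2-\frac1{10}-\frac{48}{1600}=\frac{187}{100}>0$;
    \item for $n=7$ and $b=\frac3{125}$ it equals $2-\frac{12}{125}-\frac{315}{15625}>0$.
\end{itemize}
Hence $c_4>0$ on the whole range.

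\emph{The coefficient of $|\Rico|^2\mathrm{id}$.} In \eqref{eq:RicEvol-pf2} this coefficient is $2\,\frac{n^2b^2-2(n-1)(\tilde a-b)(1-2b)}{n(1+2(n-1)\tilde a)}$. Substituting $\tilde a-b$ again gives the numerator
\begin{equation*}
    n^2b^2-(n-1)(n-2)b^2(1-2b)=b^2\bigl(3n-2+2(n-1)(n-2)b\bigr).
\end{equation*}
Every term in this bracket is positive for $b>0$, so this coefficient is positive for all $b>0$, and in particular for $n=7,8$. The denominator is positive as before.

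\emph{Main obstacle.} There is essentially none. The only point needing care is that the factor $(1-2b)$ multiplies $(\tilde a-b)$ in the second coefficient but not in the first. Keeping track of this is what produces the cancellation $n-(n-2)=2$ in the first bracket and $n^2-(n-1)(n-2)=3n-2$ in the second. I would double-check these two expansions symbolically and then record the endpoint values exactly as rationals.
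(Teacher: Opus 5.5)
Your proposal is correct and follows the paper's proof. The paper records exactly the $n=8$ and $n=7$ specializations of your two general identities: $2b^2(1+4b)(1-6b)=b^2(2-4b-48b^2)$ and $2b^2(11+42b)$ at $n=8$, and $b^2(2-4b-35b^2)$ and $b^2(19+60b)$ at $n=7$, with the same endpoint value $\frac{5887}{3125}$ at $b=\frac3{125}$. The only difference is that you derive the factorizations once for general $n$ and then specialize, which is a harmless and slightly cleaner presentation.
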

Consequently the terms
\begin{equation*}
    2b^2\Rico(S)^2\wedge\mathrm{id},\qquad
    c_4\,|\Rico(S)|^2\,\mathrm{id}\wedge\mathrm{id}
\end{equation*}
have nonnegative $\Phi_\lambda$-reaction, and the $|\Rico|^2\mathrm{id}$ term in
\eqref{eq:RicEvol-pf2} has nonnegative $Z_\lambda$-reaction. They may therefore be discarded in a
lower bound.

\medskip\noindent\textbf{The case $0\le\lambda<1$.} Differentiating \eqref{eq:Zlambda}, inserting
\eqref{eq:RicEvol-pf2} and \eqref{eq:Dab-tilde}, and discarding only the nonnegative
contributions just described gives
\begin{equation}\label{eq:Zdot}
    \begin{split}
        \dt Z_\lambda(S)\ge{}&
        \Bigl[\Phi_\lambda\bigl(Q(S)\bigr)+2\tilde a\,\Phi_\lambda(\Ric\wedge\Ric)
        +2\sqrt{2\tilde a}\,(1-\lambda^2)\bigl((S*\Ric)_{11}+(S*\Ric)_{22}\bigr)\Bigr]\\
        &+\sqrt{2\tilde a}\,(1-\lambda^2)\Bigl[
        \frac{4(n-2)\tilde a}{n}\,\Scal\,\bigl(\Ric_{11}+\Ric_{22}\bigr)
        -4b\bigl((\Rico)^2_{11}+(\Rico)^2_{22}\bigr)
        +\frac{8\tilde a}{n^2}\,\Scal^2\Bigr].
    \end{split}
\end{equation}
Here the displayed coefficients are exact: writing
$(\Ric^2)_{ii}=(\Rico^2)_{ii}+\frac2n\Scal\Ric_{ii}-\frac1{n^2}\Scal^2$ in
\eqref{eq:RicEvol-pf2}, the coefficient of $\Scal\,\bigl(\Ric_{11}+\Ric_{22}\bigr)$ is
$\frac4n\bigl(2b+(n-2)\tilde a\bigr)-\frac{8b}{n}=\frac{4(n-2)\tilde a}{n}$, and the
scalar-square coefficient is
$2\cdot\frac{4}{n^2}(\tilde a-b)+2\cdot\frac{4b}{n^2}=\frac{8\tilde a}{n^2}$.

The first bracket in \eqref{eq:Zdot} is nonnegative by Lemma \ref{lem:boundary-reaction} applied
with $H=\sqrt{2\tilde a}\,\Ric(S)$: its hypothesis is exactly $Z_\lambda\ge0$ for all frames and
all $\lambda\in[0,1]$, together with the assumed equality $Z_\lambda(S)=0$.

For the second bracket, condition (ii) at $\lambda=1$ gives $S\in\CPIC$, and Lemma \ref{lem:wpic}(1) gives $\lambda_{\max}\bigl(\Ric(S)\bigr)\le\frac12\Scal(S)$. Moreover the
first-cone control, transferred by condition (i), gives
\begin{equation}\label{eq:zeta}
    \Ric(S)_{11}+\Ric(S)_{22}\ge\frac{2\bigl(1-\zeta(b)\bigr)}{n}\Scal(S)
\end{equation}
for every orthonormal pair, where
\begin{equation}\label{eq:zeta-def}
    \zeta(b)=\frac{1+2(n-1)\tilde a(b)}{1+2(n-1)a_{max}}\,
    \frac{1+(n-2)b_{max,2}}{1+(n-2)b}\,\bigl(1+\gamma_{max}\bigr).
\end{equation}
Indeed, writing $l_{\tilde a,b}(S)=l_{a_{max},b_{max,2}}(U)$ with
$U\in\mathcal{E}(b_{max,2})$,
condition (3) of Definition \ref{def:pinchingCones1} gives
$\Ric(U)_{11}+\Ric(U)_{22}+\frac{2\gamma_{max}}{n}\Scal(U)\ge0$, and pulling this back through
the two transformations by means of \eqref{eq:labtrace-pf2} yields \eqref{eq:zeta}.

\begin{claim}[Two-eigenvalue gap]\label{claim:zeta}
    For $n=7,8$ and $0<b\le\tilde b_{max,2}$: $\zeta(b)<1$.
\end{claim}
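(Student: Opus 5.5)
The plan is to reduce Claim \ref{claim:zeta} to a single endpoint evaluation: first show that $\zeta$ is increasing in $b$, then evaluate $\zeta$ exactly at $b=\tilde b_{max,2}$ in each dimension.

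\emph{Splitting off the constant.} In \eqref{eq:zeta-def} the factors
\[
\frac{1+(n-2)b_{max,2}}{1+2(n-1)a_{max}}\,(1+\gamma_{max})
\]
depend only on the first-family endpoint, so they are fixed positive rationals. The $b$-dependence sits entirely in the ratio
\[
r(b):=\frac{1+2(n-1)\tilde a(b)}{1+(n-2)b}.
\]

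\emph{Monotonicity of $r$.} With $\tilde a=b+\frac{n-2}{2}b^2$ the numerator of $r$ is
\[
N(b)=1+2(n-1)b+(n-1)(n-2)b^2,
\]
and the denominator is $D(b)=1+(n-2)b$. Then $N'D-ND'$ is a polynomial in $b$ whose constant term is $2(n-1)-(n-2)=n>0$. I expect its remaining coefficients to be nonnegative as well, which I will confirm by expanding it. This gives $r'>0$ for $b>0$. Hence $\zeta$ is strictly increasing on $(0,\tilde b_{max,2}]$, and it suffices to prove $\zeta(\tilde b_{max,2})<1$.

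\emph{Endpoint check for $n=8$.} Here $b_{max,2}=\frac1{18}$, and the closed forms of Subsection \ref{subsec:proof1-8} give
\[
a_{max}=\frac{49}{738},\qquad 1+14a_{max}=\frac{1424}{738},\qquad \gamma_{max}=\frac1{41},\qquad 1+6b_{max,2}=\frac43.
\]
At $b=\frac1{40}$ we have
\[
\tilde a=\frac{43}{1600},\qquad 1+14\tilde a=\frac{2202}{1600},\qquad 1+6b=\frac{46}{40}.
\]
Therefore
\[
\zeta\Bigl(\tfrac1{40}\Bigr)=\frac{2202}{1600}\cdot\frac{738}{1424}\cdot\frac{4/3}{46/40}\cdot\frac{42}{41},
\]
a rational number approximately equal to $0.847$, so it is less than $1$.

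\emph{Endpoint check for $n=7$.} The same computation applies using \eqref{eq:data7}:
\[
a_{max}=\frac{b(2+5b)^2}{4(1+2b)},\qquad \gamma_{max}=\frac{b}{2(1+2b)}\qquad\text{at } b=\frac{131}{2000},
\]
and $\tilde a=b+\frac52 b^2$ at $b=\frac3{125}$. The result is an explicit rational number to be compared with $1$. I will record the exact fraction, or rather the positive numerator of $1-\zeta$, so the check can be verified directly.

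\emph{Main obstacle.} The only real risk is the size of the margin at $n=7$, where the first-family endpoint is larger. A rough estimate already puts $\zeta$ around $0.85$ there, so no sharper argument should be needed. If it were, the fallback is to keep the exact rational factors rather than any decimal bounds.
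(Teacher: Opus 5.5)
Your proposal is correct and follows essentially the paper's route: show $\zeta$ is increasing, then evaluate it exactly at $b=\tilde b_{max,2}$. Your general numerator $N'D-ND'=n+2(n-1)(n-2)b+(n-1)(n-2)^2b^2$ specializes to the paper's $4(2+21b+63b^2)$ and $7+60b+150b^2$, and the endpoint values are $\zeta(\tfrac1{40})=\tfrac{69363}{81880}\approx0.847$ and $\zeta(\tfrac3{125})=\tfrac{1085014}{1323083}\approx0.820$ (so the $n=7$ margin is somewhat better than your rough estimate of $0.85$).
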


Since every factor in \eqref{eq:zeta-def} is positive, Claim \ref{claim:zeta} gives
$\zeta(b)\in(0,1)$. Moreover,
\begin{equation*}
    \varrho=\frac{b}{\tilde a}=\Bigl(1+\frac{n-2}{2}b\Bigr)^{-1}\in(0,1],
    \qquad
    K:=\frac{n^2-2n+2}{(n-2)^2}.
\end{equation*}
Thus Lemma \ref{lem:ricci-quadratic} applies with $H=\Ric(S)$.
After multiplication by $4\tilde a$ it yields
\begin{equation}\label{eq:secondbracket}
    \begin{split}
        &\frac{4(n-2)\tilde a}{n}\,\Scal\,\bigl(\Ric_{11}+\Ric_{22}\bigr)
        -4b\bigl((\Rico)^2_{11}+(\Rico)^2_{22}\bigr)\\
        &\hspace{4em}\ge\frac{8\tilde a}{n^2}
        \left((n-2)(1-\zeta)-2K\zeta^2\frac{b}{\tilde a}\right)\Scal^2.
    \end{split}
\end{equation}
Adding the remaining $\frac{8\tilde a}{n^2}\Scal^2$ from \eqref{eq:Zdot} proves
\begin{equation}\label{eq:ZdotF}
    \dt Z_\lambda(S)\ge\sqrt{2\tilde a}\,(1-\lambda^2)\frac{8\tilde a}{n^2}\,
    F(b)\,\Scal(S)^2,
\end{equation}
where
\begin{equation}\label{eq:Fb}
    F(b):=1+(n-2)\bigl(1-\zeta(b)\bigr)-2K\zeta(b)^2\frac{b}{\tilde a}
    =1+(n-2)\bigl(1-\zeta(b)\bigr)-2K\,\frac{\zeta(b)^2}{1+\frac{n-2}{2}b}.
\end{equation}
\begin{claim}[Positivity of $F$]\label{claim:F}
    For $n=7,8$ and $0<b\le\tilde b_{max,2}$: $F(b)>0$.
\end{claim}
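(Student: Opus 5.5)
The plan is to reduce Claim \ref{claim:F} to a monotonicity statement plus one exact endpoint evaluation, as was done for the first family in Lemmas \ref{lem:gapcond2-8} and \ref{lem:gapcond2-7}. Fix $n\in\{7,8\}$. First I will compute the constants $a_{max}=a(b_{max,2})$ and $\gamma_{max}=\gamma(b_{max,2})$ exactly from \eqref{data}. For $n=8$, $b_{max,2}=\frac1{18}$, so $a_{max}=\frac{2b(1+3b)^2}{2+5b}$ and $\gamma_{max}=\frac{b}{2+5b}$ at $b=\frac1{18}$. For $n=7$, $b_{max,2}=\frac{131}{2000}$ and the closed forms \eqref{eq:data7} apply. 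With these constants, \eqref{eq:zeta-def} reads
\begin{equation*}
\zeta(b)=c_n\,\frac{N(b)}{1+(n-2)b},\qquad N(b):=1+2(n-1)b+(n-1)(n-2)b^2 ,
\end{equation*}
where $c_n:=\frac{(1+(n-2)b_{max,2})(1+\gamma_{max})}{1+2(n-1)a_{max}}$ is an explicit positive rational. Consequently $F$ in \eqref{eq:Fb} is an explicit rational function of $b$.

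Next I will show that $F$ is decreasing on $(0,\tilde b_{max,2}]$. There are two pieces.
\begin{enumerate}
\item The term $1+(n-2)(1-\zeta)$ is decreasing because $\zeta$ is increasing. Indeed,
\begin{equation*}
N'(1+(n-2)b)-(n-2)N=n+2(n-1)(n-2)b+(n-1)(n-2)^2b^2>0 .
\end{equation*}
\item The quotient $\zeta^2/(1+\frac{n-2}{2}b)$ is increasing. By logarithmic differentiation this is equivalent to
\begin{equation*}
\frac{2\zeta'}{\zeta}\ \ge\ \frac{(n-2)/2}{1+\frac{n-2}{2}b}.
\end{equation*}
Here
\begin{equation*}
\frac{\zeta'}{\zeta}=\frac{n+2(n-1)(n-2)b+(n-1)(n-2)^2b^2}{N(b)\,(1+(n-2)b)} ,
\end{equation*}
so after clearing the positive denominators this becomes a polynomial inequality in $b$. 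Its constant term is $2n-\frac{n-2}{2}>0$, and I expect every coefficient to be positive, as happens in Lemma \ref{lem:monotonicity-8}.
\end{enumerate}
If the coefficient check in the second item fails, I will instead differentiate $F$ directly and factor the numerator. Either way, $F(b)\ge F(\tilde b_{max,2})$ on $(0,\tilde b_{max,2}]$.

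It then remains to evaluate $F$ exactly at $b=\frac1{40}$ for $n=8$, where $K=\frac{50}{36}=\frac{25}{18}$, and at $b=\frac3{125}$ for $n=7$, where $K=\frac{37}{25}$. Since $\zeta$ is rational, $F(\tilde b_{max,2})$ is an exact rational number, and I will record it as a positive fraction; this is the content of \eqref{eq:F8end}. Claim \ref{claim:zeta} ($\zeta<1$) drops out of the same computation, because $\zeta$ is increasing and $\zeta(\tilde b_{max,2})<1$ can be read off the same exact value.

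The main obstacle is that the margin is thin. Item (b) of the introduction shows that discarding the factor $b/\tilde a$ already makes the expression negative at $n=8$. So no crude bound on $\zeta$ or $K$ is affordable, and the endpoint value must be computed exactly from the rational data, with no decimal rounding of $a_{max}$, $\gamma_{max}$ or $\zeta$. For $n=7$, the large denominators coming from $b_{max,2}=\frac{131}{2000}$ will be handled the same way, by exact rational arithmetic.
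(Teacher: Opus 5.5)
Your proposal is correct and follows the paper's route: show that $F$ is decreasing on $(0,\tilde b_{max,2}]$, then evaluate it exactly at the right endpoint, where $F(\frac1{40})=\frac{453196597}{7207159480}$ and $F(\frac3{125})=\frac{54495087372}{2507542624841}$. The only difference is in the monotonicity step: the paper differentiates the explicit rational $F$ in each dimension and factors $F'$, whereas you split $F$ into two monotone pieces. Your expected coefficient check does succeed. After clearing denominators, your second item becomes, up to a factor $2$, $(3n+2)+(n-2)(7n-4)b+5(n-1)(n-2)^2b^2+(n-1)(n-2)^3b^3>0$, so your argument has the small bonus of being uniform in $n$.
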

Since $\lambda<1$, $\tilde a>0$ and, for $S\ne0$, $\Scal(S)>0$, the right-hand side of
\eqref{eq:ZdotF} is strictly positive: every nonzero boundary point with
$\lambda\in[0,1)$ is strictly pushed into the cone.

\medskip\noindent\textbf{The case $\lambda=1$.} The Ricci correction in $Z_1$ vanishes, and from
\eqref{eq:Dab-tilde}, after discarding the same nonnegative terms,
\begin{equation}\label{eq:Zdot1}
    \dt Z_1(S)\ge\Phi_1\bigl(Q(S)\bigr)+2\tilde a\,(\Ric\wedge\Ric)(\varphi,\bar\varphi),
    \qquad \varphi=(e_1+ie_2)\wedge(e_3+ie_4).
\end{equation}
The first term is nonnegative because $\CPIC$ is preserved by the Hamilton ODE and
$Z_1(S)=S(\varphi,\bar\varphi)=0$. For the second, write $\varphi=z\wedge w$. A unitary change
of basis in $\operatorname{span}_{\mathbb C}\{z,w\}$ diagonalizes the Hermitian form
$\Ric(S)(\cdot,\overline{\cdot})$ and changes $z\wedge w$ only by a unit complex factor.
Relabelling the resulting basis as $z=e_1+ie_2$, $w=e_3+ie_4$, we have
$\Ric(S)(z,\bar w)=0$ and
\begin{equation}\label{eq:RicwedgeRic}
    (\Ric\wedge\Ric)(z,w,\bar z,\bar w)
    =2\bigl(\Ric_{11}+\Ric_{22}\bigr)\bigl(\Ric_{33}+\Ric_{44}\bigr).
\end{equation}
Claim \ref{claim:zeta} and \eqref{eq:zeta} show that, for $S\ne0$, both factors are strictly
positive. Hence $\dt Z_1(S)>0$ at every nonzero boundary point.

The three claims introduced above are the only dimension-specific inputs of the boundary
calculation; they are verified in Subsections \ref{subsec:proof2-8} and \ref{subsec:proof2-7}.

\subsection{Proof of the claims for \texorpdfstring{$n=8$}{n=8}}
\label{subsec:proof2-8}

Let
\begin{equation*}
    0<b\le\tilde b_{max}=\frac1{40},\qquad \tilde a=b+3b^2,\qquad b_{max}=\frac1{18}.
\end{equation*}
For the first-family endpoint,
\begin{equation*}
    a_{max}=\frac{49}{738},\qquad \gamma_{max}=\frac1{41},
\end{equation*}
and the quantity \eqref{eq:zeta-def} is
\begin{equation}\label{eq:zeta8}
    \zeta(b)=\frac{1+14\tilde a(b)}{1+14a_{max}}\,\frac{1+6b_{max}}{1+6b}\,(1+\gamma_{max})
    =\frac{63}{89}\,\frac{1+14b+42b^2}{1+6b}.
\end{equation}
\medskip\noindent\textbf{Proof of Claim \ref{claim:discards}.}
The exact scalar identities underlying the discards in \eqref{eq:Zdot} are
\begin{equation}\label{eq:discard8}
    8b^2(1-2b)-2(\tilde a-b)(1-2b+8b^2)=2b^2(1+4b)(1-6b)>0,
\end{equation}
\begin{equation}\label{eq:discard8b}
    64b^2-14(\tilde a-b)(1-2b)=2b^2(11+42b)>0:
\end{equation}
the first makes $c_4>0$ in \eqref{eq:c4}, and the second makes the $|\Rico|^2\mathrm{id}$
coefficient in \eqref{eq:RicEvol-pf2} positive.

\medskip\noindent\textbf{Proof of Claim \ref{claim:zeta}.}
\begin{equation}\label{eq:zeta8mono}
    \zeta'(b)=\frac{252\bigl(2+21b+63b^2\bigr)}{89(1+6b)^2}>0,
    \qquad
    \zeta(b)\le\zeta\Bigl(\frac1{40}\Bigr)=\frac{69363}{81880}<1.
\end{equation}

\medskip\noindent\textbf{Proof of Claim \ref{claim:F}.}
Since
\begin{equation*}
    K=\frac{8^2-16+2}{6^2}=\frac{25}{18},
\end{equation*}
the function \eqref{eq:Fb} has the exact expression
\begin{equation}\label{eq:F8}
    F(b)=\frac{10780-250761b-5352228b^2-28171584b^3-44881452b^4}{7921(1+3b)(1+6b)^2}.
\end{equation}
Its derivative is
\begin{equation*}
    F'(b)=-\frac{63(12823272b^5+14960484b^4+6686424b^3+1453080b^2+155270b+6547)}
    {7921(1+3b)^2(1+6b)^3}<0,
\end{equation*}
so its endpoint value gives
\begin{equation}\label{eq:F8end}
    F(b)\ge F\Bigl(\frac1{40}\Bigr)=\frac{453196597}{7207159480}>0.
\end{equation}
The three claims are now verified for $n=8$. By the boundary calculation of Subsection
\ref{subsec:proof2-common}, $\tilde{\mathcal{E}}(b)$ is transversally invariant under
\eqref{eq:pullback}, and by \eqref{eq:condi} the cone
$\tilde C(b)$ is transversally invariant under $\dt R=Q(R)$ for $0<b\le\frac1{40}$.

\begin{remark}\label{rem:sharpbracket}
    It is important that \eqref{eq:F8} uses the factor $\frac{b}{\tilde a}=(1+3b)^{-1}$. The
    stronger estimate $1+6(1-\zeta)\ge2K\zeta^2$, in which this factor is replaced by $1$ (as in
    Lemma 4.3 of \cite{Chen2024}), is false at $n=8$: at $b=\frac1{40}$, its left-hand side minus
    its right-hand side equals
    \begin{equation*}
        -\frac{102165973}{1340866880}<0.
    \end{equation*}
    The proof does not need it: the two terms in \eqref{eq:secondbracket} occur together in the
    evolution with the single factor $\frac{b}{\tilde a}$, and the actual quantity $F(b)$ is
    strictly positive by \eqref{eq:F8end}.
\end{remark}

\subsection{Proof of the claims for \texorpdfstring{$n=7$}{n=7}}
\label{subsec:proof2-7}

Let
\begin{equation*}
    0<b\le\frac3{125},\qquad \tilde a=b+\frac52b^2,\qquad b_{max}=\frac{131}{2000}.
\end{equation*}
Here
\begin{equation*}
    a_{max}=\frac{113545691}{1447680000},\qquad \gamma_{max}=\frac{131}{4524},
\end{equation*}
and the quantity \eqref{eq:zeta-def} is
\begin{equation}\label{eq:zeta7}
    \zeta(b)=\frac{1+12\tilde a(b)}{1+12a_{max}}\,\frac{1+5b_{max}}{1+5b}\,(1+\gamma_{max})
    =\frac{931000}{1323083}\,\frac{1+12b+30b^2}{1+5b}.
\end{equation}
\medskip\noindent\textbf{Proof of Claim \ref{claim:discards}.}
The exact scalar identities are
\begin{equation}\label{eq:discard7}
    7b^2(1-2b)-2(\tilde a-b)(1-2b+7b^2)=b^2\bigl(2-4b-35b^2\bigr)>0,
\end{equation}
\begin{equation}\label{eq:discard7b}
    49b^2-12(\tilde a-b)(1-2b)=b^2\bigl(19+60b\bigr)>0;
\end{equation}
the polynomial $2-4b-35b^2$ is decreasing and equals $\frac{5887}{3125}>0$ at $b=\frac3{125}$,
which verifies the strict inequality \eqref{eq:discard7} throughout the stated interval, and the
first identity makes $c_4>0$ in \eqref{eq:c4} while the second makes the $|\Rico|^2\mathrm{id}$
coefficient in \eqref{eq:RicEvol-pf2} positive, exactly as in dimension eight.

\medskip\noindent\textbf{Proof of Claim \ref{claim:zeta}.}
We have
\begin{equation}\label{eq:1minuszeta7}
    1-\zeta(b)=\frac{392083-4556585b-27930000b^2}{1323083(1+5b)},
\end{equation}
and direct differentiation gives
\begin{equation*}
    \zeta'(b)=\frac{931000(7+60b+150b^2)}{1323083(1+5b)^2}>0.
\end{equation*}
Hence
\begin{equation}\label{eq:zeta7end}
    \zeta(b)\le\zeta\Bigl(\frac3{125}\Bigr)=\frac{1085014}{1323083}<1.
\end{equation}

\medskip\noindent\textbf{Proof of Claim \ref{claim:F}.}
In this dimension
\begin{equation*}
    K=\frac{7^2-14+2}{5^2}=\frac{37}{25},\qquad
    \frac{b}{\tilde a}=\frac{2}{2+5b}\in(0,1],
\end{equation*}
the exact function from \eqref{eq:Fb} is
\begin{equation}\label{eq:F7}
    F(b)=\frac{2\bigl(48073724982-1361705572725b-22007606233400b^2-94605191101125b^3
    -124828596375000b^4\bigr)}{47312124997(1+5b)^2(2+5b)}.
\end{equation}
Writing
\begin{equation*}
    p_7(b)=1340801250b^5+1877121750b^4+1005603625b^3
    +261905005b^2+33520472b+1686468,
\end{equation*}
we have
\begin{equation*}
    F'(b)=-\frac{4655000p_7(b)}{47312124997(1+5b)^3(2+5b)^2}<0.
\end{equation*}
Thus $F$ satisfies the exact endpoint bound
\begin{equation}\label{eq:F7end}
    F(b)\ge F\Bigl(\frac3{125}\Bigr)=\frac{54495087372}{2507542624841}>0.
\end{equation}

The three claims are now verified for $n=7$, and the remaining steps of the argument of
Subsection \ref{subsec:proof2-common} are dimension-free: equation \eqref{eq:tildea} reads
$2b+5b^2-2\tilde a=0$, so the decomposition \eqref{eq:Dab-tilde} holds, and the boundary
reaction (Lemma \ref{lem:boundary-reaction}), the pullback identity \eqref{eq:condi} and the
preservation of condition (i) require no changes. Hence $\tilde{\mathcal{E}}(b)$ is
transversally invariant under \eqref{eq:pullback}, and $\tilde C(b)$ is transversally invariant
under $\dt R=Q(R)$ for $0<b\le\frac3{125}$.

Together with Subsection \ref{subsec:proof2-8}, this proves Theorem \ref{thm:trans2}.

\section{Proof of Theorem \ref{thm:gluing}}
\label{sec:proof3}

The gluing equality is reduced, by a dimension-free shift computation, to a single claim
(Subsection \ref{subsec:proof3-reduction}). The claim is then proved at $n=8$ (Subsection
\ref{subsec:proof3-8}) by exhibiting an explicit
nonnegative combination of averaged frame inequalities and applying a Bernstein representation in
$\lambda$.

Throughout this section, an algebraic curvature operator is written $T$, and
$T_{ijkl}:=T(e_i,e_j,e_k,e_l)$. We use the frame functional
\begin{equation}\label{eq:gl-Wcal}
    \mathcal{W}_T(v_1,v_2,v_3,v_4):=T_{1313}+T_{1414}+T_{2323}+T_{2424}-2T_{1234},
\end{equation}
so that $T$ is weakly PIC precisely when $\mathcal{W}_T(v_1,v_2,v_3,v_4)\ge0$ for every orthonormal
four-frame. Sign-averaging this inequality means averaging it after independently changing the
signs of selected frame vectors. All terms odd in one of those vectors disappear. We use the
standard tilted consequence of weakly PIC: for every $p\ge5$ and $\lambda\in[0,1]$,
\begin{equation}\label{eq:gl-tilt}
    T_{1313}+\lambda^2T_{1414}+T_{2323}+\lambda^2T_{2424}-2\lambda T_{1234}
    +(1-\lambda^2)\bigl(T_{1p1p}+T_{2p2p}\bigr)\ge0,
\end{equation}
obtained by averaging the weakly PIC inequalities of the two frames
$\{e_1,e_2,e_3,\lambda e_4\pm\sqrt{1-\lambda^2}\,e_p\}$. Finally, we use the trace consequences
\begin{equation}\label{eq:gl-trace}
    \Ric_{11}+\Ric_{22}+\Ric_{33}+\Ric_{44}\ge0,\qquad \Scal\ge0,
\end{equation}
which follow from Lemma \ref{lem:wpic}(3) and (6).

\subsection{The shift and the reduction of cone equality}
\label{subsec:proof3-reduction}

Let $b_{max}=b_{max,2}$, $a_{max}=a(b_{max})$,
$\tilde b_{max}=\tilde b_{max,2}$ and $\tilde a_{max}=\tilde a(\tilde b_{max})$ denote the
endpoint data of the first and second cone
families. Put
\begin{equation*}
    S:=l_{\tilde a_{max},\tilde b_{max}}^{-1}\bigl(l_{a_{max},b_{max}}(T)\bigr).
\end{equation*}
On the scalar, trace-free Ricci and Weyl summands in
\begin{equation*}
    \CB(\mathbb{R}^n)=\langle\mathrm{id}\wedge\mathrm{id}\rangle\oplus\langle\Rico\rangle\oplus\langle\mathcal W\rangle,
\end{equation*}
the two maps in the preceding display act by scalars, and a direct comparison of the multipliers
gives
\begin{equation}\label{eq:gl-shift}
    S=T+\kappa\,\Ric(T)\wedge\mathrm{id}+\frac Yn\Scal(T)\,\mathrm{id}\wedge\mathrm{id},
\end{equation}
where
\begin{equation}\label{eq:gl-kappa}
    \kappa=\frac{b_{max}-\tilde b_{max}}{1+(n-2)\tilde b_{max}},\qquad
    \Delta_A=\frac{a_{max}-\tilde a_{max}}{1+2(n-1)\tilde a_{max}},\qquad
    Y=\Delta_A-\kappa.
\end{equation}
Indeed, the Weyl multiplier is $1$. On $\varphi_0\wedge\mathrm{id}$, where $\Ric=(n-2)\varphi_0$
and $\Scal=0$, the two multipliers are respectively
$\frac{1+(n-2)b_{max}}{1+(n-2)\tilde b_{max}}$ and $1+(n-2)\kappa$, and on
$c\,\mathrm{id}\wedge\mathrm{id}$, where $\Ric=2(n-1)c\,\mathrm{id}$ and $\Scal=2n(n-1)c$, they
are respectively $\frac{1+2(n-1)a_{max}}{1+2(n-1)\tilde a_{max}}$ and
$1+2(n-1)(\kappa+Y)=1+2(n-1)\Delta_A$.

For an orthonormal four-frame and $\lambda\in[0,1]$, set
\begin{equation}\label{eq:gl-Llambda}
    L_\lambda(T):=S_{1313}+S_{2323}+\lambda^2\bigl(S_{1414}+S_{2424}\bigr)-2\lambda S_{1234}
    +s(1-\lambda^2)\bigl(\Ric(S)_{11}+\Ric(S)_{22}\bigr),
\end{equation}
where $s=\sqrt{2\tilde a_{max}}$. The desired reduced statement is the following claim.

\begin{claim}\label{claim:glstar}
    For $n=8$, with the endpoint data in \eqref{eq:gl-endpoint8}:
    $L_\lambda(T)\ge0$ for every weakly PIC $T$, every orthonormal four-frame, and every
    $\lambda\in[0,1]$.
\end{claim}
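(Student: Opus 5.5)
The plan is to make $L_\lambda(T)$ completely explicit in terms of $T$, organise its dependence on $\lambda$ through the Bernstein basis $(1-\lambda)^2,\ 2\lambda(1-\lambda),\ \lambda^2$, and reduce everything to one $\lambda$-free linear inequality on $\CPIC$. That last inequality would then be certified by a finite nonnegative combination of sign-averaged frame inequalities \eqref{eq:gl-Wcal}.

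\textbf{Step 1: expansion.} Insert \eqref{eq:gl-shift} into \eqref{eq:gl-Llambda}, using
$(h\wedge\mathrm{id})_{ijij}=h_{ii}+h_{jj}$, $(h\wedge\mathrm{id})_{1234}=0$, $(\mathrm{id}\wedge\mathrm{id})_{ijij}=2$, and
\begin{equation*}
\Ric(S)=\bigl(1+(n-2)\kappa\bigr)\Ric(T)+\Bigl(\kappa+\tfrac{2(n-1)Y}{n}\Bigr)\Scal(T)\,\mathrm{id}.
\end{equation*}
The result should have the form
\begin{equation*}
L_\lambda(T)=\Phi_\lambda(T)+c_1(\lambda)\bigl(\Ric_{11}+\Ric_{22}\bigr)+2\kappa\bigl(\Ric_{33}+\lambda^2\Ric_{44}\bigr)+c_0(\lambda)\Scal,
\end{equation*}
where
\begin{equation*}
c_1(\lambda)=\kappa(1+\lambda^2)+s(1-\lambda^2)\bigl(1+(n-2)\kappa\bigr),\qquad c_0(\lambda)=\tfrac{4Y}{n}(1+\lambda^2)+s(1-\lambda^2)\Bigl(2\kappa+\tfrac{4(n-1)Y}{n}\Bigr).
\end{equation*}
Here $\Ric=\Ric(T)$ and $\Scal=\Scal(T)$. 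Next I evaluate $\kappa$, $Y$ and $s$ exactly at $n=8$ from the endpoint data $b_{max}=\frac1{18}$, $a_{max}=\frac{49}{738}$, $\tilde b_{max}=\frac1{40}$, $\tilde a_{max}=\frac1{40}+\frac{3}{1600}$. In particular I record the sign of $Y$, which decides whether the scalar term helps or must be compensated.

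\textbf{Step 2: the endpoints.} At $\lambda=1$ one gets
\begin{equation*}
L_1=\Phi_1(T)+2\kappa\textstyle\sum_{i\le4}\Ric_{ii}+\tfrac{8Y}{n}\Scal.
\end{equation*}
This is nonnegative by the frame inequality $\Phi_1\ge0$, \eqref{eq:gl-trace} and $\kappa>0$, provided $Y\ge0$. If $Y<0$, I would borrow a small multiple of $\Phi_1$, spread over frames, to dominate the scalar term.

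\textbf{Step 3: general $\lambda$.} I average \eqref{eq:gl-tilt} over $p=5,\dots,n$ and use
\begin{equation*}
\textstyle\sum_{p\ge5}(T_{1p1p}+T_{2p2p})=\Ric_{11}+\Ric_{22}-2T_{1212}-T_{1313}-T_{1414}-T_{2323}-T_{2424}.
\end{equation*}
This writes
\begin{equation*}
L_\lambda=(\text{tilted inequality})+(1-\lambda^2)M_1(T)+\lambda^2 M_2(T)+M_3(T),
\end{equation*}
where each $M_i$ is a $\lambda$-free linear functional. Rewriting $1$, $\lambda^2$ and $1-\lambda^2$ in the Bernstein basis gives
\begin{equation*}
L_\lambda-\text{(tilt)}=(1-\lambda)^2B_0+2\lambda(1-\lambda)B_1+\lambda^2B_2.
\end{equation*}
Here $B_2$ is essentially the $\lambda=1$ expression from Step 2, and $B_1$ should be a positive combination of $B_0$ and $B_2$, or itself manifestly nonnegative via \eqref{eq:gl-trace}. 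So it suffices to prove $B_0\ge0$. This is a single linear inequality involving $T_{1212}$, the sectional terms $T_{1313},\dots,T_{2424}$, $\Ric_{11}+\Ric_{22}$, $\Ric_{33}$, $\Ric_{44}$ and $\Scal$, with explicit rational (and $s$-irrational) coefficients.

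\textbf{Step 4: the certificate (main obstacle).} Proving $B_0(T)\ge0$ on $\CPIC$ is the expected difficulty: it is precisely the point where Chen's scalar estimate fails at $n=8$. I would first write $B_0$ in terms of the sectional curvatures $K_{ij}$ in the fixed frame, grouped by the $O(4)\times O(n-4)$ index classes $\{1,2\}$, $\{3\}$, $\{4\}$, $\{5,\dots,8\}$. Then I would solve a small linear program for nonnegative weights on sign-averaged inequalities $W^0(i,j,p,q)\ge0$, one per index-class pattern, so that their sum reproduces $B_0$ exactly. I expect about eight patterns to suffice: $(1,2\,|\,3,4)$, $(1,3\,|\,2,4)$, $(1,2\,|\,3,p)$, $(1,2\,|\,p,q)$, $(1,p\,|\,2,q)$, $(3,4\,|\,p,q)$, $(3,p\,|\,4,q)$ and $(p,q\,|\,p',q')$.

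The irrational constant $s$ I would replace by rational bounds of the correct sign before solving. I would then verify the identity coefficient by coefficient. If the exact program is infeasible, I would next allow tilted frames through \eqref{eq:gl-tilt} with a fixed rational $\lambda$, at the cost of reintroducing $T_{1234}$ terms to be cancelled in pairs.
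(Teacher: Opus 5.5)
\paragraph{There is a genuine gap in Step 3: it reduces the claim to a false inequality.}
Your Step 1 expansion is correct. It is the paper's $L_\lambda=\Phi(\lambda)+(1-u)N_0+uN_1$ with $u=\lambda^2$, and indeed $Y>0$.

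The problem is the form you require in Step 3. The term $-2\lambda x$ is the only odd term in $L_\lambda$. So writing $L_\lambda-(\text{tilt})=(1-\lambda^2)M_1+\lambda^2M_2+M_3$ forces you to subtract the \emph{whole} averaged tilt \eqref{eq:gl-tilt8}. The remainder is then $(1-u)(N_0-\tau)+uN_1$. Hence $B_0=B_1=N_0-\tau$ and $B_2=N_1$, and ``$B_0\ge0$'' means $L_0(T)\ge\beta+\tau$.

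This inequality fails on $\CPIC$. Take the diagonal operator $T^*$ with $T^*_{1212}=-1$, $T^*_{3434}=1$, $T^*_{ipip}=\frac12$ for $i\le4<p$, and all other components zero. For it, $\beta=\gamma^*=x=0$, $\tau=1$, $U=2$, $\Ric_{33}=3$ and $\Scal=16$, so by \eqref{eq:gl-encl8}
\[
N_0-\tau=2c_1+3c_2+16c_3-1=2d+3c_2+16c_3-\tfrac12<-0.006 .
\]

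$T^*$ is nevertheless weakly PIC. Here is how to check it:
\begin{itemize}
\item For a frame $\{v_k\}$, put $r_i=(\langle e_i,v_k\rangle)_k\in\mathbb R^4$. View $\mathbb R^4$ as $\mathbb C^2$ via the complex structure $v_1\mapsto v_2$, $v_3\mapsto v_4$. Then $|\varphi_{ij}|^2=|\det(r_i,r_j)|^2$ and $\sum_{i\le8}r_ir_i^T=I$.
\item This gives $\mathcal W_{T^*}=\sum_{i\le4}|r_i|^2-2|\det(r_1,r_2)|^2-\sum_{a\le2<b\le4}|\det(r_a,r_b)|^2$, subject to $\sum_{i\le4}r_ir_i^T\le I$.
\item Rotate $e_1,e_2$ so that $r_1\perp r_2$, then minimize over $r_3,r_4$. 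This gives exactly $\ge0$.
\end{itemize}
So no certificate for $B_0$ exists, with or without tilted frames.

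You can also see the obstruction inside your own linear program. In the variables \eqref{eq:gl-t8}, $t_{12}$ occurs only in the split-pair inequalities (E5)--(E8). Their multipliers are capped by the $t_{34}$, $t_{3W}$ and $t_{4W}$ coefficients of $N_0-\tau$, namely $c_2+2c_3$, $4c_2+8c_3$ and $8c_3$. Covering the $t_{12}$ coefficient $2c_1+2c_3$ then costs at least $4c_1-6c_2-16c_3$ units of $t_W$. Only $8c_1+16c_3-2$ are available, and the shortfall is exactly because $2c_1+3c_2+16c_3<1$. The operator $T^*$ is the dual point of this program.

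\paragraph{The missing idea.}
Subtract only \emph{half} of the averaged tilt, as the paper does in \eqref{eq:gl-split8}. Then $Q(\lambda)=\frac12\Phi(\lambda)+(1-u)(N_0-\frac\tau2)+uN_1$ keeps a genuine linear term $-x\lambda$.

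After reflecting $e_4$ to arrange $x\ge0$, you bound the middle Bernstein coefficient $Q(0)-\frac x2$ from below using the un-tilted frame inequality $x\le\frac12(\beta+\gamma^*)$. This yields the weaker functional $K=\frac14(\beta-\gamma^*)-\frac12\tau+N_0$. Compared with your $N_0-\tau$, it gains $t_W+\frac12(t_3-t_4)$. That is precisely the slack that makes $2K$ a combination of (E1)--(E8) with strictly positive multipliers. On $T^*$ one gets $K\approx0.49$.
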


\paragraph{Note on dimension seven.}\label{note:dimension-seven}
The separate invariance results of Theorems \ref{thm:trans1} and \ref{thm:trans2} also hold at
$n=7$, but the seven-dimensional analogue of Claim \ref{claim:glstar} is not established here.
The averaging argument for (E2) in \eqref{eq:gl-E8} requires a four-frame in
$\mathrm{span}\{e_5,\dots,e_n\}$, which is unavailable at $n=7$. Accordingly,
Theorem \ref{thm:gluing}, Proposition \ref{prop:pinching}, and the classification theorems
\ref{thm:main1} and \ref{thm:main2} are asserted only in dimension eight.

The reduction of Theorem \ref{thm:gluing} to Claim \ref{claim:glstar} is as follows. The inclusion
$\tilde C(\tilde b_{max})\subset C(b_{max})$ is exactly the first defining condition of
Definition \ref{def:pinchingCones2}. Conversely, let
\begin{equation*}
    R=l_{a_{max},b_{max}}(T)\in C(b_{max}),\qquad T\in\mathcal{E}(b_{max}),
\end{equation*}
and define $S$ by the shift \eqref{eq:gl-shift}. Then
$l_{\tilde a_{max},\tilde b_{max}}(S)=R\in C(b_{max})$, so $S$ satisfies the first defining
condition of $\tilde{\mathcal{E}}(\tilde b_{max})$. Conditions (1) and (2) of Definition
\ref{def:pinchingCones1} imply that $T$ is weakly PIC. Since $s=\sqrt{2\tilde a_{max}}$, the
$Z_\lambda$-condition for $S$ is exactly \eqref{eq:gl-Llambda}, hence holds by Claim
\ref{claim:glstar}. Thus $S\in\tilde{\mathcal{E}}(\tilde b_{max})$, and consequently
$R=l_{\tilde a_{max},\tilde b_{max}}(S)\in\tilde C(\tilde b_{max})$. This proves the reverse
inclusion $C(b_{max})\subset\tilde C(\tilde b_{max})$. Accordingly, proving Claim
\ref{claim:glstar} proves the gluing equality.

We close this subsection with the idea behind the proof of Claim \ref{claim:glstar}, carried
out in Subsection \ref{subsec:proof3-8}. For a fixed frame,
$L_\lambda(T)$ is a quadratic polynomial in $\lambda$ whose coefficients are linear in $T$. One
part of it, the tilted-frame average, is nonnegative, and the remainder $Q(\lambda)$ is
controlled through its Bernstein representation on $[0,1]$: the control value at $1$ is
nonnegative by the un-tilted frame and trace inequalities, and the other two control values
follow from the nonnegativity of a single linear functional $K$ of $T$ (together with $x\ge0$,
arranged by reflecting $e_4$). The proof is thus reduced to $K\ge0$, which is certified by
writing $2K$ as an explicit nonnegative combination of the eight averaged frame inequalities
\eqref{eq:gl-E8}, with all multipliers strictly positive. 

\subsection{Proof of Claim \ref{claim:glstar} for \texorpdfstring{$n=8$}{n=8}}
\label{subsec:proof3-8}

In dimension $8$ the endpoint constants are
\begin{equation}\label{eq:gl-endpoint8}
    b_{max}=\frac1{18},\qquad a_{max}=\frac{49}{738},\qquad
    \tilde b_{max}=\frac1{40},\qquad \tilde a_{max}=\frac{43}{1600},\qquad
    s=\sqrt{2\tilde a_{max}}=\frac{\sqrt{86}}{40}.
\end{equation}
The shift \eqref{eq:gl-shift} has
\begin{equation}\label{eq:gl-shift8}
    \kappa=\frac{11}{414},\qquad
    \Delta_A=\frac{23333}{812538},\qquad
    Y=\frac{20054}{9344187}>0,
\end{equation}
and
\begin{equation}\label{eq:gl-Ric8}
    \Ric(S)_{11}+\Ric(S)_{22}=R\bigl(\Ric(T)_{11}+\Ric(T)_{22}\bigr)+\frac V4\Scal(T),
    \qquad \Scal(S)=(1+14\Delta_A)\Scal(T),
\end{equation}
where
\begin{equation*}
    R=1+6\kappa=\frac{80}{69},\qquad
    V=14\Delta_A-6\kappa=8\kappa+14Y=\frac{2266960}{9344187}.
\end{equation*}
For example, $\kappa=\frac{1/18-1/40}{1+6/40}=\frac{11/360}{23/20}=\frac{11}{414}$.

Fix an orthonormal frame $\{e_1,\dots,e_8\}$ and abbreviate
\begin{equation*}
\begin{split}
    \beta&=T_{1313}+T_{2323},\qquad \gamma^*=T_{1414}+T_{2424},\qquad x=T_{1234},\\
    \tau&=\frac14\sum_{p=5}^8\bigl(T_{1p1p}+T_{2p2p}\bigr),\qquad
    U=\Ric_{11}+\Ric_{22},\qquad u=\lambda^2.
\end{split}
\end{equation*}
Since $(\Ric\wedge\mathrm{id})_{ijij}=\Ric_{ii}+\Ric_{jj}$,
$(\mathrm{id}\wedge\mathrm{id})_{ijij}=2$ and $(\Ric\wedge\mathrm{id})_{1234}=0$, the shift
\eqref{eq:gl-shift8} gives
\begin{equation*}
    S_{1313}+S_{2323}=\beta+\kappa\bigl(U+2\Ric_{33}\bigr)+\frac Y2\Scal,\qquad
    S_{1414}+S_{2424}=\gamma^*+\kappa\bigl(U+2\Ric_{44}\bigr)+\frac Y2\Scal,\qquad
    S_{1234}=x.
\end{equation*}
Combining these identities with \eqref{eq:gl-Ric8} yields the exact expansion
\begin{equation}\label{eq:gl-expand8}
    L_\lambda(T)=\Phi(\lambda)+(1-u)N_0+uN_1,\qquad
    \Phi(\lambda)=\beta+u\gamma^*-2\lambda x,
\end{equation}
where
\begin{equation}\label{eq:gl-N8}
    N_0=(\kappa+sR)\,U+2\kappa\Ric_{33}+\Bigl(\frac Y2+\frac{sV}{4}\Bigr)\Scal,\qquad
    N_1=2\kappa\bigl(U+\Ric_{33}+\Ric_{44}\bigr)+Y\Scal.
\end{equation}
Write
\begin{equation}\label{eq:gl-const8}
\begin{split}
    c_1&:=\kappa+sR=\frac{11}{414}+\frac{2\sqrt{86}}{69},\qquad
    c_2:=2\kappa=\frac{11}{207},\\
    c_3&:=\frac Y2+\frac{sV}{4}=\frac{10027}{9344187}+\frac{28337\sqrt{86}}{18688374},\qquad
    d:=c_1-\frac14=\frac{2\sqrt{86}}{69}-\frac{185}{828}.
\end{split}
\end{equation}
The elementary enclosure $9.27<\sqrt{86}<9.28$ gives
\begin{equation}\label{eq:gl-encl8}
    0.0452<d<0.0456,\qquad 0.05314<c_2<0.05315,\qquad 0.01512<c_3<0.01515,
\end{equation}
and hence
\begin{equation}\label{eq:gl-ineq8}
    \begin{aligned}
        2d+2c_3-c_2&>0.0674, & 1-8c_2-20c_3&>0.2718,\\
        2c_2+12c_3-4d&>0.1053, & 8c_2+44c_3-1&>0.0904,\\
        12d+14c_2+52c_3-2&>0.0726, & 2+4d-6c_2-12c_3&>1.5.
    \end{aligned}
\end{equation}

\medskip\noindent\textbf{The frame inequalities.}
Let
$W=\mathrm{span}\{e_5,e_6,e_7,e_8\}$ and set
\begin{equation}\label{eq:gl-t8}
    \begin{gathered}
        t_{12}=T_{1212},\qquad t_3=\frac12\bigl(T_{1313}+T_{2323}\bigr)=\frac\beta2,\qquad
        t_4=\frac12\bigl(T_{1414}+T_{2424}\bigr)=\frac{\gamma^*}2,\\
        t_W=\frac18\sum_{p\ge5}\bigl(T_{1p1p}+T_{2p2p}\bigr)=\frac\tau2,\qquad t_{34}=T_{3434},\\
        t_{3W}=\frac14\sum_{p\ge5}T_{3p3p},\qquad
        t_{4W}=\frac14\sum_{p\ge5}T_{4p4p},\qquad
        t_{WW}=\frac16\sum_{5\le p<q\le8}T_{pqpq}.
    \end{gathered}
\end{equation}
These are normalized traces and therefore do not depend on the chosen basis of $W$. Direct
contraction gives
\begin{equation}\label{eq:gl-traces8}
    U=2\bigl(t_{12}+t_3+t_4+4t_W\bigr),\qquad
    \Ric_{33}=2t_3+t_{34}+4t_{3W},\qquad
    \Ric_{44}=2t_4+t_{34}+4t_{4W},
\end{equation}
\begin{equation}\label{eq:gl-scal8}
    \Scal=2t_{12}+4t_3+4t_4+16t_W+2t_{34}+8t_{3W}+8t_{4W}+12t_{WW}.
\end{equation}
The following eight quantities are nonnegative. Each is an averaged weakly PIC inequality:
\begin{equation}\label{eq:gl-E8}
    \begin{array}{rclrcl}
        \mathrm{(E1)}&t_W\ge0,&\mathrm{(E2)}&t_{WW}\ge0,\\
        \mathrm{(E3)}&t_3+t_W\ge0,&\mathrm{(E4)}&t_W+t_{4W}\ge0,\\
        \mathrm{(E5)}&t_{12}+2t_W+t_{WW}\ge0,&\mathrm{(E6)}&t_{12}+t_3+t_W+t_{3W}\ge0,\\
        \mathrm{(E7)}&t_{12}+t_4+t_W+t_{4W}\ge0,&\mathrm{(E8)}&t_{12}+t_3+t_4+t_{34}\ge0.
    \end{array}
\end{equation}
Indeed:
\begin{itemize}
    \item for (E1), sign-average the frames $\{e_1,e_2,e_p,e_q\}$, $p\ne q$ in $W$, then sum over
    the six unordered pairs; each $p$ occurs three times, and the sum is $24t_W$;
    \item for (E2), split $W$ into two pairs in the three ways $\{56\mid78\}$, $\{57\mid68\}$,
    $\{58\mid67\}$; the sign-averaged weakly PIC inequality for each of the three resulting four-frames
    is the sum of its four crossing sectional curvatures, and each of the six pairs in $W$ occurs
    twice, giving $12t_{WW}$;
    \item for (E3), sign-average $\{e_1,e_2,e_3,e_p\}$ and average over $p\in W$; this gives
    $2(t_3+t_W)$;
    \item for (E4), sign-average $\{e_i,e_4,e_p,e_q\}$ for $i\in\{1,2\}$ and $5\le p<q\le8$, then
    sum; the result is $24(t_W+t_{4W})$;
    \item for (E5), sign-average $\{e_1,e_p,e_2,e_q\}$ for ordered $p\ne q$ in $W$ and average over
    the twelve choices; this is $t_{12}+2t_W+t_{WW}$;
    \item for (E6), sign-average $\{e_i,e_3,e_{3-i},e_p\}$ and average over $i=1,2$ and $p\in W$;
    this is $t_{12}+t_3+t_W+t_{3W}$, and the same argument with $e_4$ in place of $e_3$ gives (E7);
    \item for (E8), use the four frames $\{e_i,e_3,e_{3-i},e_4\}$ and $\{e_i,e_4,e_{3-i},e_3\}$,
    $i=1,2$, and sign-average in $e_4$, respectively $e_3$; each of $T_{1313},T_{2323},T_{1414},
    T_{2424}$ occurs twice, while the mixed terms disappear, and their average is (E8).
\end{itemize}

\medskip\noindent\textbf{The linear certificate.}
Define
\begin{equation}\label{eq:gl-K8}
    K:=\frac14\bigl(\beta-\gamma^*\bigr)-\frac12\tau+N_0.
\end{equation}
Since $c_1=\frac14+d$ and $U=2t_{12}+\beta+\gamma^*+4\tau$, this simplifies to
\begin{equation*}
    K=\frac12\bigl(t_{12}+\beta+\tau\bigr)+dU+c_2\Ric_{33}+c_3\Scal.
\end{equation*}
Thus $2K$ has, in the order $(t_{12},t_3,t_4,t_W,t_{34},t_{3W},t_{4W},t_{WW})$, the coefficients
\begin{equation}\label{eq:gl-table8}
    \bigl(1+4d+4c_3,\; 2+4d+4c_2+8c_3,\; 4d+8c_3,\; 2+16d+32c_3,\;
    2c_2+4c_3,\; 8c_2+16c_3,\; 16c_3,\; 24c_3\bigr).
\end{equation}
The decisive linear certificate is
\begin{equation}\label{eq:gl-cert8}
    2K=\mu_8\mathrm{(E8)}+\mu_6\mathrm{(E6)}+\mu_7\mathrm{(E7)}+\mu_5\mathrm{(E5)}
    +\mu_3\mathrm{(E3)}+\mu_4\mathrm{(E4)}+\mu_1\mathrm{(E1)}+\mu_2\mathrm{(E2)},
\end{equation}
where
\begin{equation}\label{eq:gl-mu8}
    \begin{aligned}
        \mu_8&=2c_2+4c_3, & \mu_6&=8c_2+16c_3,\\
        \mu_7&=4d+4c_3-2c_2, & \mu_5&=1-8c_2-20c_3,\\
        \mu_3&=2+4d-6c_2-12c_3, & \mu_4&=2c_2+12c_3-4d,\\
        \mu_1&=12d+14c_2+52c_3-2, & \mu_2&=8c_2+44c_3-1.
    \end{aligned}
\end{equation}
The identity \eqref{eq:gl-cert8} is verified coefficient-by-coefficient: the $t_{34}$ and
$t_{3W}$ coefficients first determine $\mu_8$ and $\mu_6$. Then $t_4,t_{12},t_3,t_{4W},t_{WW},
t_W$ determine $\mu_7,\mu_5,\mu_3,\mu_4,\mu_2,\mu_1$, respectively. In particular, the last
coefficient is
\begin{equation*}
    \begin{split}
        \mu_1={}&(2+16d+32c_3)-(8c_2+16c_3)-(4d+4c_3-2c_2)\\
        &\quad-2\bigl(1-8c_2-20c_3\bigr)-\bigl(2+4d-6c_2-12c_3\bigr)-\bigl(2c_2+12c_3-4d\bigr)\\
        ={}&12d+14c_2+52c_3-2.
    \end{split}
\end{equation*}
All multipliers are strictly positive: $\mu_8,\mu_6>0$ immediately. The first row of
\eqref{eq:gl-ineq8} gives $\mu_7,\mu_5>0$, the second row gives $\mu_4,\mu_2>0$, and the third
row gives $\mu_1,\mu_3>0$. Hence $K\ge0$.

\medskip\noindent\textbf{The Bernstein argument.}
Replacing $e_4$ by $-e_4$ changes $x$ to $-x$ and leaves all other quantities in
the argument unchanged, so it suffices to consider
\begin{equation}\label{eq:gl-x8}
    x\ge0.
\end{equation}
Averaging \eqref{eq:gl-tilt} over $p=5,\dots,8$ gives
\begin{equation}\label{eq:gl-tilt8}
    \Phi(\lambda)+(1-u)\tau\ge0.
\end{equation}
The weakly PIC inequality for the un-tilted frame and \eqref{eq:gl-trace}, respectively, give
\begin{equation}\label{eq:gl-wpic8}
    \beta+\gamma^*-2x\ge0,\qquad N_1\ge0.
\end{equation}
Split off one half of \eqref{eq:gl-tilt8}:
\begin{equation}\label{eq:gl-split8}
    L_\lambda(T)=\frac12\bigl[\Phi(\lambda)+(1-u)\tau\bigr]+Q(\lambda),\qquad
    Q(\lambda):=\frac12\Phi(\lambda)+(1-u)\Bigl(N_0-\frac\tau2\Bigr)+uN_1.
\end{equation}
It remains to prove $Q\ge0$. It is quadratic in $\lambda$:
\begin{equation*}
    Q(\lambda)=Q(0)-x\lambda+\Bigl(\frac{\gamma^*}2+N_1-N_0+\frac\tau2\Bigr)\lambda^2.
\end{equation*}
The Bernstein representation of a quadratic is
\begin{equation}\label{eq:gl-bernstein8}
    Q(\lambda)=(1-\lambda)^2Q(0)+2\lambda(1-\lambda)\Bigl(Q(0)-\frac x2\Bigr)+\lambda^2Q(1),
\end{equation}
and all three Bernstein weights are nonnegative on $[0,1]$. The three control values are
nonnegative:
\begin{equation*}
    Q(1)=\frac12\bigl(\beta+\gamma^*-2x\bigr)+N_1\ge0
\end{equation*}
by \eqref{eq:gl-wpic8}. By $x\le\frac12(\beta+\gamma^*)$ from \eqref{eq:gl-wpic8},
\begin{equation*}
    Q(0)-\frac x2=\frac\beta2-\frac x2+N_0-\frac\tau2
    \ge\frac\beta2-\frac{\beta+\gamma^*}4+N_0-\frac\tau2=K\ge0;
\end{equation*}
and $Q(0)=\bigl(Q(0)-\frac x2\bigr)+\frac x2\ge0$ by \eqref{eq:gl-x8}. Hence $Q(\lambda)\ge0$,
and then \eqref{eq:gl-tilt8} gives $L_\lambda(T)\ge0$. This proves Claim \ref{claim:glstar} for
$n=8$, and by the reduction of Subsection \ref{subsec:proof3-reduction},
\begin{equation*}
    \tilde C\Bigl(\frac1{40}\Bigr)=C\Bigl(\frac1{18}\Bigr)\qquad\text{in }\CB(\mathbb{R}^8).
\end{equation*}

This proves Theorem \ref{thm:gluing}.

\bibliographystyle{plain}

\bibliography{refs}

\end{document}